\theoremstyle{plain}
\newtheorem{theorem}[equation]{Theorem}
\newtheorem{lemma}[equation]{Lemma}
\newtheorem{proposition}[equation]{Proposition}
\theoremstyle{definition}
\newtheorem{definition}[equation]{Definition}
\theoremstyle{remark}
\newtheorem{remark}[equation]{Remark}
\numberwithin{equation}{section}
\newcommand{\dist}{\operatorname{dist}}
\newcommand{\re}{\mathbb{R}}
\newcommand{\ree}{\mathbb{R}^{n+1}}
\newcommand{\N}{\mathbb{N}}
\newcommand{\Z}{\mathbb{Z}}
\newcommand{\dd}{\mathbb{D}}
\newcommand{\om}{\Omega}
\newcommand{\F}{\mathcal{F}}
\newcommand{\W}{\mathcal{W}}
\newcommand{\mut}{\mathfrak{m}}
\newcommand{\pom}{\partial\Omega}
\newcommand{\hm}{\omega}
\renewcommand{\P}{\mathcal{P}}
\renewcommand{\emptyset}{\mbox{\textup{\O}}}
\newcommand{\tinyemptyset}{\mbox{\tiny \textup{\O}}}
\DeclareMathOperator{\diam}{diam}
\DeclareMathOperator{\interior}{int}
\def\div{\mathop{\operatorname{div}}\nolimits}
\newcommand\rh{\mathcal{H}}
\newcommand\rd{d}
\newcommand{\mc}[1]{\mathcal{#1}}
\begin{document}
\allowdisplaybreaks

\title[Rectifiability and elliptic measures]{Rectifiability and elliptic measures on 1-sided NTA domains with Ahlfors-David regular boundaries}

\author{Murat Akman}

\address{Murat Akman\\
Mathematical Sciences Research Institute
\\
17 Gauss Way
\\
Berkeley, CA 94720, USA}
\email{makman@msri.org}

\author{Matthew Badger}

\address{Matthew Badger\\ Department of Mathematics
\\
University of Connecticut
\\
Storrs,
CT 06269-3009, USA}
\email{matthew.badger@uconn.edu}

\author{Steve Hofmann}

\address{Steve Hofmann
\\
Department of Mathematics
\\
University of Missouri
\\
Columbia, MO 65211, USA} \email{hofmanns@missouri.edu}

\author{Jos\'e Mar{\'\i}a Martell}

\address{Jos\'e Mar{\'\i}a Martell\\
Instituto de Ciencias Matem\'aticas CSIC-UAM-UC3M-UCM\\
Consejo Superior de Investigaciones Cient{\'\i}ficas\\
C/ Nicol\'as Cabrera, 13-15\\
E-28049 Madrid, Spain} \email{chema.martell@icmat.es}

\thanks{The first and last authors have been supported in part by the Spanish Ministry of Economy and Competitiveness, through the ``Severo Ochoa Programme for Centres of Excellence in R\&D'' (SEV-2015-0554) and they acknowledge that the research leading to these results has received funding from the European Research Council under the European Union's Seventh Framework Programme (FP7/2007-2013)/ ERC agreement no. 615112 HAPDEGMT. The second  author was partially supported by an NSF postdoctoral fellowship, DMS 1203497, and by NSF grant DMS 1500382. The third author was partially supported by NSF grant DMS 1361701.}

\date{July 6, 2015. \textit{Revised}\textup{:} \today}

\subjclass[2010]{28A75, 28A78, 31A15, 31B05, 35J25, 42B37, 49Q15}

\keywords{NTA domains, 1-sided NTA domains, uniform domains, Ahlfors-David regular sets, rectifiability, harmonic measure, elliptic measure, surface measure, linearly approximability, elliptic operators}

\begin{abstract}
Let $\Omega \subset \ree$, $n\geq 2$, be 1-sided NTA domain also known as uniform domain), i.e., a domain which satisfies interior Corkscrew
and Harnack Chain conditions, and assume that $\pom$ is $n$-dimensional Ahlfors-David regular. We characterize the rectifiability of $\pom$ in terms of the absolute continuity of surface measure with respect to harmonic measure. We also show that these are equivalent to the fact that $\pom$ can be covered $\mathcal{H}^n$-a.e.~by a countable union of portions of boundaries of bounded chord-arc subdomains of $\Omega$ and to the fact that $\partial\Omega$ possesses exterior corkscrew points in a qualitative way $\mathcal{H}^n$-a.e. Our methods apply to harmonic measure and also to elliptic measures associated with real symmetric second order divergence form elliptic operators with locally Lipschitz coefficients whose derivatives  satisfy a natural qualitative Carleson condition.
\end{abstract}

\maketitle

\tableofcontents

\section{Introduction and statement of main results}\label{s1}
A well known result of F. and M. Riesz says that if $\Omega$ is a
simply connected planar domain whose boundary is a Jordan curve of finite length, then harmonic measure $\omega$ and arclength $\rh^1|_{\partial\Omega}$ are mutually absolutely continuous. A quantitative version of this theorem was proved by Lavrentiev in \cite{Lav}. Due to examples of Bishop and Jones in \cite{BiJo} in the planar case, and of Ziemer in \cite{Z} and Wu in \cite{Wu} in higher dimensions, neither $\rh^{n}|_{\partial\Omega}\ll\omega$ nor $\omega\ll\rh^{n}$ are true for arbitrary simply connected domains $\Omega\subset\ree$ with $\rh^n(\partial\Omega)<\infty$ without imposing additional topological and/or non-topological conditions on $\partial\Omega$. Quantitative mutual absolute continuity of harmonic measure and surface measure in higher dimensions was proven when $\Omega$ is a Lipschitz domain by Dahlberg in \cite{Dah}, and when $\Omega$ is  non-tangentially accessible (NTA) (see Definition \ref{def-nta}) and $\partial\Omega$ is Ahlfors-David regular (ADR, see Definition \ref{def-ADR}) independently by David and Jerison in \cite{DJ} and by Semmes in \cite{Sem}. It is now known that if $\Omega$ is a 1-sided NTA domain (see Definition \ref{def-1nta}) with ADR boundary, then the following are equivalent:
\begin{align}
\label{AHMNT}
\begin{split}
&\mbox{(i)\, $\partial\Omega$ is Uniformly Rectifiable},\\
&\mbox{(ii)\, $\Omega$ is an NTA domain (and therefore $\Omega$ is a chord-arc domain)},\\
&\mbox{(iii)\, $\omega\in A_{\infty}$},\\
&\mbox{(iv)\, $\omega\in\mbox{ weak}-A_{\infty}$}.
\end{split}
\end{align}
Here (iii) and (iv) should be understood in a scale invariant sense (see Definition \ref{defi-Ainfty}). 
The implication (i) implies (ii) was proved in \cite{AHMNT}; (ii) implies (iii) was proved independently in \cite{DJ, Sem} as
mentioned above; (iii) implies (iv) is trivial; and (iv) implies (i) was proved in \cite{HMU}. On the other hand, in \cite{Ba}, it was shown that if $\Omega$ is an NTA domain with $\rh^{n}(\partial\Omega)<\infty$, then $\partial\Omega$ is $n$-rectifiable and $\rh^{n}|_{\partial\Omega}\ll\omega$. Moreover, it was also shown in \cite{Ba} that if $\Omega$ is an NTA domain, then $\omega\ll \rh^n\ll \omega$ on $A$, where
\[
A:=\left\{x\in\partial\Omega:\, \liminf\limits_{r\to 0} \frac{\rh^{n}(\partial\Omega\cap B(x,r))}{r^{n}}<\infty\right\}.
\]
However, due to an example of Azzam, Mourgoglou, and Tolsa in \cite{AMT}, harmonic measure is not necessarily absolutely continuous with respect to surface measure on the entire boundary of an NTA domain of locally finite perimeter. In particular, the authors of \cite{AMT} constructed Reifenberg flat domains $\Omega$ with locally finite surface measure $\rh^n|_{\partial\Omega}$ and Borel sets $E\subset\partial\Omega$ with $\omega(E)>0=\rh^{n}(E)$. (In fact, the sets $E$ have Hausdorff dimension less than $n$.) Therefore, in order to ensure that $\omega\ll \rh^{n}$ on the full boundary of an NTA domain of locally finite perimeter, one needs to identify some additional qualitative or quantitative conditions on $\partial\Omega$. For related results on $p$-harmonic measure, see \cite{LN11}.

The main result proved in this article is the following qualitative version of (\ref{AHMNT}) (see Section \ref{ssdefs} for the precise definitions).
\begin{theorem}
\label{main}
Let $\Omega\subset\ree$, $n\ge 2$, be a 1-sided NTA domain whose boundary is ADR. Write $\omega:=\omega^{X_0}$ for the harmonic measure of $\Omega$ with pole at $X_0$, any given point in $\Omega$, and write $\sigma:=\rh^n|_{\partial\Omega}$ for surface measure on $\partial\Omega$. Then the following statements are equivalent:
\begin{enumerate}[label=\textup{(\alph*)},ref=\alph*]\itemsep=0.1cm
\item \label{maina} $\partial\Omega$ is rectifiable;

\item \label{mainb} there exists a set $F\subset\partial\Omega$ and a constant $c_{0}$, $0<c_0<1$  such that $\sigma(F)=0$ and for all $x\in \partial\Omega\setminus F$ there is  $r_{x}>0$  for which $\Delta(x,r)=\partial\Omega\cap B(x,r)$ has an exterior corkscrew point (that is, a corkscrew point with respect to the open set $\Omega_{\rm ext}=\ree\setminus\overline{\Omega}$) for all $0<r<r_{x}$ with implicit constant $c_0$;

\item \label{mainc} $\sigma\ll \omega$ on $\partial\Omega$;

\item \label{maind} $\partial\Omega=F_0\cup\left(\bigcup_N F_{N}\right)$, where $\sigma(F_{0})=0$ and $F_{N}=\partial\Omega\cap \partial\Omega_{N}$ for some bounded chord-arc domain  $\Omega_{N}\subset\Omega$;

\item \label{maine} $\partial\Omega=F_0\cup\left(\bigcup_N F_{N}\right)$, where $\sigma(F_{0})=0$ and for each $N$ there exist constants $\theta_N$, $\theta_N'>0$ and $C_N>1$ such that
$$
C_N^{-1}\sigma(F)^{\theta_N'}
\le
\omega(F)
\le
C_N\,\sigma(F)^{\theta_N}
\qquad
\forall\,F\subset F_N.
$$
\end{enumerate}

\end{theorem}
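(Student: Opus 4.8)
The plan is to prove the cycle of implications $(\mathrm a)\Rightarrow(\mathrm d)\Rightarrow(\mathrm e)\Rightarrow(\mathrm c)\Rightarrow(\mathrm b)\Rightarrow(\mathrm a)$, the main work being in $(\mathrm a)\Rightarrow(\mathrm d)$. I will use freely the trivial implication $(\mathrm d)\Rightarrow(\mathrm a)$: if $\partial\Omega=F_0\cup\bigcup_N F_N$ with $\sigma(F_0)=0$ and $F_N\subset\partial\Omega_N$ for chord-arc domains $\Omega_N$, then each $\partial\Omega_N$ is uniformly rectifiable (by the equivalence of (i) and (ii) recalled above), hence $n$-rectifiable, so $\partial\Omega$ is rectifiable; in particular, when proving the other implications one may always assume that a given decomposition of $\partial\Omega$ is the one produced in $(\mathrm a)\Rightarrow(\mathrm d)$.

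\emph{$(\mathrm a)\Rightarrow(\mathrm d)$.} Assume $\partial\Omega$ is rectifiable. Then at $\sigma$-a.e.\ $x$ it has an approximate tangent $n$-plane and $n$-density $1$, and combined with the ADR bounds this upgrades, at $\sigma$-a.e.\ $x$, to \emph{bilateral} flatness at all sufficiently small scales; an Egorov argument then gives, for every $\eta,\eps>0$, a compact set $K\subset\partial\Omega$ with $\sigma(\partial\Omega\setminus K)<\eta$ on which $\partial\Omega$ is \emph{uniformly} $\eps$-Reifenberg flat below a fixed scale $r_0$. The plan is then to fix a dyadic cube $Q_0$ on $\partial\Omega$ of side length $\approx r_0$ and run a stopping-time/corona decomposition relative to $K$: let $\F$ be the family of maximal subcubes of $Q_0$ at which the $\eps$-flatness or the contact with $K$ is first lost, and build the associated sawtooth region $\Omega_{\F,Q_0}\subset\Omega$. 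By \cite{AHMNT}, $\Omega_{\F,Q_0}$ inherits the interior corkscrew and Harnack chain conditions from $\Omega$; the uniform bilateral flatness over the non-stopped cubes makes $\partial\Omega_{\F,Q_0}$ ADR and $\Omega_{\F,Q_0}$ Reifenberg flat, and in particular furnishes the exterior corkscrew condition, because $\partial\Omega$ genuinely separates space at those scales while $\Omega$, connected and with density-$1$ boundary, lies on one side. Thus $\Omega_{\F,Q_0}$ is a bounded chord-arc domain, and since every cube of $Q_0$ meeting $K$ is non-stopped, $K\cap Q_0\subset\partial\Omega\cap\partial\Omega_{\F,Q_0}$. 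Letting $Q_0$ range over a countable cover of $\partial\Omega$ and $\eta\to0$ produces countably many bounded chord-arc subdomains of $\Omega$ whose boundaries meet $\partial\Omega$ in sets covering $\sigma$-almost all of $\partial\Omega$, which is $(\mathrm d)$.

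\emph{$(\mathrm d)\Rightarrow(\mathrm e)\Rightarrow(\mathrm c)$.} By $(\mathrm d)\Rightarrow(\mathrm a)$ one may take the $\Omega_N$ to be the sawtooths above, with $F_N=\partial\Omega\cap\partial\Omega_N$. On each bounded chord-arc $\Omega_N$, David--Jerison and Semmes \cite{DJ,Sem} give $\omega_{\Omega_N}\in A_\infty(\sigma_N)$, $\sigma_N:=\rh^n|_{\partial\Omega_N}$, whence, $\Omega_N$ being bounded, two-sided power bounds $C^{-1}\sigma_N(E)^{\beta_N}\le\omega_{\Omega_N}^{X_N}(E)\le C\,\sigma_N(E)^{\alpha_N}$ for all Borel $E\subset\partial\Omega_N$, with $X_N\in\Omega_N$. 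For $E\subset F_N$ we have $\sigma_N(E)=\sigma(E)$; since $\Omega_N\subset\Omega$ the maximum principle gives $\omega^{X_N}(E)\ge\omega_{\Omega_N}^{X_N}(E)$, and a Harnack chain moves the pole to $X_0$, which is the lower bound in $(\mathrm e)$. For the upper bound I would use the restriction identity $\omega^{Y}(E)=\int_{\partial\Omega_N}\omega^{Z}(E)\,d\omega_{\Omega_N}^{Y}(Z)$ with $Y$ a corkscrew point of a surface ball of $\Omega_N$ inside $F_N$, split the integral over $F_N$ (where the integrand equals $\mathbf 1_E$) and over $\partial\Omega_N\cap\Omega$, and control the latter by Harnack in $\Omega$; here it is essential that the subdomains are sawtooths, since a sawtooth has an interior corkscrew region abutting every point of $F_N$, so from such a pole the $\omega_{\Omega_N}$-mass of $\partial\Omega_N\cap\Omega$ is small and the resulting term may be absorbed. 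This gives $(\mathrm e)$. Finally $(\mathrm e)\Rightarrow(\mathrm c)$ is immediate: if $\sigma(E)>0$ then $\sigma(E\cap F_N)>0$ for some $N$ (as $\sigma(F_0)=0$), so $\omega(E)\ge C_N^{-1}\sigma(E\cap F_N)^{\theta_N'}>0$.

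\emph{$(\mathrm c)\Rightarrow(\mathrm b)\Rightarrow(\mathrm a)$, and remarks.} For $(\mathrm c)\Rightarrow(\mathrm b)$ --- the subtlest of the remaining links --- I would prove the contrapositive: if $(\mathrm b)$ fails, then for some $c_0$ the set $G$ of $x\in\partial\Omega$ admitting no exterior corkscrew ball of radius $c_0r$ in $B(x,r)$ at arbitrarily small scales $r$ has $\sigma(G)>0$, and on $G$ (where $\Omega$ fills $B(x,r)$ up to a thin shell at a sequence of scales) harmonic measure and surface measure become mutually singular --- harmonic measure escaping onto a set of Hausdorff dimension smaller than $n$, as in the examples of \cite{AMT} --- so that $\sigma(E)>0=\omega(E)$ for some $E\subset G$, contradicting $\sigma\ll\omega$. (Alternatively, one may deduce rectifiability of $\partial\Omega$ directly from $\sigma\ll\omega$ via a rectifiability criterion for harmonic measure and close the cycle through $(\mathrm a)$.) For $(\mathrm b)\Rightarrow(\mathrm a)$ I would rerun the construction of $(\mathrm a)\Rightarrow(\mathrm d)$, using the exterior corkscrew hypothesis (uniformized over a compact set via Egorov) in place of flatness: the sawtooth's exterior corkscrews come from those of $\Omega$ along the common boundary $F_N$ and from the Whitney-cube structure on the remaining (interior) boundary; this yields $(\mathrm d)$, hence $(\mathrm a)$. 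The \textbf{main obstacle} is $(\mathrm a)\Rightarrow(\mathrm d)$, where purely qualitative rectifiability --- which controls $\partial\Omega$ only at $\sigma$-a.e.\ point and scale --- must be converted into a genuine geometric object: this requires the Egorov reduction to uniform flatness on compacta, the corona/sawtooth construction, and a careful verification that the sawtooth is chord-arc (the exterior corkscrew condition and the ADR of its boundary being the delicate points). The elliptic case follows the same scheme with $\omega$ replaced by the elliptic measure $\omega_L$, using the $A_\infty$-theory for real symmetric divergence-form operators with locally Lipschitz coefficients obeying the qualitative Carleson condition --- the analogue on chord-arc domains of \cite{DJ,Sem} --- together with the maximum and comparison principles, all valid in that setting.
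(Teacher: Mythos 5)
Your proposed cycle $(\mathrm a)\Rightarrow(\mathrm d)\Rightarrow(\mathrm e)\Rightarrow(\mathrm c)\Rightarrow(\mathrm b)\Rightarrow(\mathrm a)$ is structurally different from the paper's scheme (which proves $(\mathrm a)\Rightarrow(\mathrm b)\Rightarrow(\mathrm d)\Rightarrow(\mathrm c)\Rightarrow(\mathrm d)$ and $(\mathrm b)\Rightarrow(\mathrm e)$), and two of the proposed links have genuine gaps that the paper's technical machinery exists precisely to fill.

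The most serious gap is $(\mathrm c)\Rightarrow(\mathrm b)$. Your contrapositive sketch --- if exterior corkscrews fail on a set of positive $\sigma$-measure, then $\omega$ ``escapes to lower dimension as in \cite{AMT}'' and $\sigma\not\ll\omega$ --- is not an argument. The AMT examples are constructions of particular Reifenberg flat domains; they do not assert, and it is not at all clear, that failure of exterior corkscrews on a positive-measure set forces mutual singularity of $\omega$ and $\sigma$. Producing a set where $\omega$ vanishes but $\sigma$ does not is precisely what needs a proof, and the paper obtains it by an indirect route: it shows $(\mathrm c)\Rightarrow(\mathrm d)$ by first running a stopping time on $d\omega/d\sigma$, then proving a Carleson packing estimate for the family $\mathcal{B}^N$ of ``bad'' cubes lacking $\eta(N)$-exterior corkscrews (Lemma \ref{lamma:pack}). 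That packing estimate is the technical heart: one uses the approximation $\int\phi_Q\,d\omega=-\iint\nabla\mathcal{G}\cdot\nabla\phi_Q$, shows the volume of $\Omega_{\rm ext}\cap B_Q'$ is tiny on bad cubes, and thus bounds $\sigma(Q)$ by a localized square function $\iint_{U_{Q,\epsilon}}|\nabla^2\mathcal{G}|^2\,\mathcal{G}$, which is then summed by integration by parts over a sawtooth. None of this appears in your outline; without it, there is no link from absolute continuity back to geometry.

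The second gap is $(\mathrm d)\Rightarrow(\mathrm e)$. For the upper bound $\omega(F)\lesssim\sigma(F)^\theta$ you split $\omega^Y(E)=\int_{\partial\Omega_N}\omega^Z(E)\,d\omega_{\Omega_N}^Y(Z)$ and assert the contribution from $\partial\Omega_N\cap\Omega$ is ``small'' and can be absorbed. That does not work: for $Z\in\partial\Omega_N\cap\Omega$ close to $E$ one can have $\omega^Z(E)$ of order $1$, and $\omega_{\Omega_N}^Y(\partial\Omega_N\cap\Omega)$ has no reason to be bounded away from $1$ (it can be most of the mass, since $F_N$ may be thin). The paper avoids this by instead proving $(\mathrm b)\Rightarrow(\mathrm e)$ via the Dahlberg--Jerison--Kenig sawtooth lemma in the projected form \eqref{DJK-P}: the projection operator $\mathcal{P}_\F$ redistributes mass over stopping cubes, the auxiliary measure $\nu$ pairs $\omega_\star$ on $\partial\Omega_\star$ with $\omega$ on the $Q_j$, and the normalizations $\mathcal{P}_\F\omega(Q_0)\approx1$, $\mathcal{P}_\F\nu(Q_0)\approx1$ are obtained from Bourgain's estimate at the top scale rather than from any smallness of the ``interior'' boundary piece. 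Your absorption argument has no analogue of this.

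For $(\mathrm a)\Rightarrow(\mathrm d)$: the strategy of going through flatness is reasonable, and the paper's $(\mathrm a)\Rightarrow(\mathrm b)$ is indeed a linear-approximability argument (Lemma \ref{Ma15.11}); but the paper converts linear approximability to \emph{exterior corkscrews} (Lemma \ref{lemma:rect-two-points} and Proposition \ref{corkomega}) --- showing that if both approximate tangent-plane half-spaces contained points of $\Omega$ a Harnack chain would have to cross the plane --- and then $(\mathrm b)\Rightarrow(\mathrm d)$ constructs a sawtooth whose exterior corkscrews come from those of $\Omega$ on $F_k$ and from the Whitney structure elsewhere. Your version, stopping on loss of bilateral flatness and claiming the resulting sawtooth is ``Reifenberg flat,'' is asking for more than is needed and is not justified: the exterior corkscrew condition is what must be verified cube-by-cube along $\partial\Omega_\star$ (Cases 1 and 2 of \S\ref{bimpliesd}), and flatness at individual points of $K$ does not automatically give Reifenberg flatness of the sawtooth boundary, which contains many Whitney faces. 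Passing through $(\mathrm b)$ first, as the paper does, is both simpler and gives you $(\mathrm b)$ for free; I would encourage you to adopt that route rather than patch the Reifenberg-flatness claim.
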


The proof of Theorem \ref{main} is in Section \ref{sect:proof-main} and goes as follows. First, observe that  \eqref{maine} easily gives  \eqref{mainc}. Second, \eqref{maind} yields \eqref{maina}, because the boundary of any chord arc domain is rectifiable (e.g., see \cite{DJ} or \cite{Ba}).  In Section \ref{rectandapprox}, we use a notion of approximate tangent planes from geometric measure theory to show that \eqref{maina} implies \eqref{mainb}. Next, we prove in Section \ref{bimpliesd} that \eqref{mainb} implies \eqref{maind} by constructing certain sawtooth domains $\Omega_{\mc{F}, Q_{0}}$, which are bounded chord-arc subdomains of $\Omega$. In Section \ref{dimpliesc}, we verify \eqref{maind} implies \eqref{mainc} by a straightforward use of the maximum principle. In Section \ref{cimpliesd}, we first show that some family of bad cubes (for which the exterior corkscrew condition fails) satisfies a Carleson packing condition. From there, we obtain that another suitable family of sawtooth domains $\Omega_{\mc{F},Q_{0}}$ are chord-arc domains and show that \eqref{mainc} implies \eqref{maind}.  To complete the proof, in Section \ref{bimpliese} we demonstrate that  \eqref{mainb} implies  \eqref{maine} by using a variant of
the Dahlberg-Jerison-Kenig sawtooth lemma and a certain projection operator.

Although our main result is written in terms of harmonic measure, our methods allow for more general elliptic measures. In particular, in Theorem \ref{main} we can replace harmonic measure $\omega$ with elliptic measures $\omega_L$ corresponding to a class of divergence form elliptic operators whose coefficients are locally Lipschitz and obey a natural Carleson measure condition. Our class of operators is motivated by the results in \cite{KP} and the recent work \cite{HMT}. The operators considered in \cite{KP} have the property that they are good (i.e., their elliptic measure is $A_\infty$) in chord-arc subdomains. This is relevant in the proof of \eqref{mainb} implies \eqref{maine}, where such a property is used for the Laplacian. On the other hand, \cite{HMT} contains
 a generalized version of the implication (iv) $\implies$ (ii) in \eqref{AHMNT},
valid for a class of elliptic operators.
In \cite{HMT}, there is an ``integration by parts'' argument that
allows the authors to obtain localized square functions estimates and we use
a similar argument here in the proof of \eqref{mainc} implies
\eqref{maind}. Here we shall assume qualitative versions of the conditions in \cite{KP}, \cite{HMT} that allow us to follow their ideas in a qualitative way. The precise result is as follows:

\begin{theorem}\label{mainvarc}
Let $\Omega\subset\ree$, $n\ge 2$, be a 1-sided NTA domain whose boundary is ADR. Let $Lu:=-\div (A\,\nabla u)$ and assume that $A$ is uniformly elliptic, real, symmetric, $A\in {\rm Lip}_{\rm loc}(\Omega)$, and for every ball $B=B(x,R)$ with $x\in\pom$ and $0<R<\diam(\pom)$, there exists $C_{B}$ such that
\begin{equation}\label{main-A-Car}
\sup_{\substack{y\in B\cap \partial\Omega \\ 0<r\le R}} \frac{1}{r^n}\iint_{B(y,r)\cap\Omega} \Big(\sup_{Z\in B(X,\delta(X)/2)}|\nabla A(Z)|\Big)\,dX\le C_B,
\end{equation}
 where $\delta(X)=\dist(X,\partial\Omega)$.
Write $\omega_{L}:=\omega_{L}^{X_0}$ for the elliptic measure of $\Omega$ associated to $L$ with pole at $X_0$, any given point in $\Omega$,  and write $\sigma:=\rh^n|_{\partial\Omega}$ for surface measure on $\partial\Omega$. Then the equivalent statements \eqref{maina}--\eqref{maine} in Theorem \ref{main} are also equivalent to the following statements:

\begin{enumerate}[label=\textup{(\alph*')},ref=\alph*']\itemsep=0.1cm

\setcounter{enumi}{2} \item \label{maincvc} $\sigma\ll \omega_L$ on $\partial\Omega$;

\setcounter{enumi}{4}\item \label{mainevc} $\partial\Omega=F_0\cup\left(\bigcup_N F_{N}\right)$, where $\sigma(F_{0})=0$ and for each $N$ there exist constants $\theta_N$, $\theta_N'>0$ and $C_N>1$ such that
$$
C_N^{-1}\sigma(F)^{\theta_N'}
\le
\omega_L(F)
\le
C_N\,\sigma(F)^{\theta_N}
\qquad
\forall\,F\subset F_N.
$$
\end{enumerate}

\end{theorem}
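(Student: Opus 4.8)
The plan is to lean on Theorem~\ref{main}, which already gives that \eqref{maina}--\eqref{maine} are mutually equivalent; it therefore suffices to splice \eqref{maincvc} and \eqref{mainevc} into that chain. Concretely, I would prove
\[
\eqref{mainb}\ \Longrightarrow\ \eqref{mainevc}\ \Longrightarrow\ \eqref{maincvc}\ \Longrightarrow\ \eqref{maind},
\]
which, together with the implications \eqref{maind}$\Rightarrow$\eqref{maina}$\Rightarrow$\eqref{mainb} already available from Theorem~\ref{main}, closes the loop. The implication \eqref{mainevc}$\Rightarrow$\eqref{maincvc} is immediate, just as \eqref{maine}$\Rightarrow$\eqref{mainc} in Theorem~\ref{main}: given $E\subset\partial\Omega$ with $\omega_L(E)=0$, decompose $E$ over the pieces $F_0$ and $F_N$; on $F_0$ surface measure vanishes, and on each $F_N$ the left-hand inequality forces $\sigma(E\cap F_N)=0$. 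I would also remark, although it is not strictly needed for the equivalence, that \eqref{maind}$\Rightarrow$\eqref{maincvc} follows directly from the maximum principle exactly as in Section~\ref{dimpliesc}, now using that $\omega_{L,\Omega_N}\in A_\infty(\sigma)$ on $\partial\Omega_N$ for the bounded chord-arc subdomains $\Omega_N\subset\Omega$ — a fact guaranteed for our class of operators by (the qualitative version of) the results of \cite{KP}.

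For \eqref{maincvc}$\Rightarrow$\eqref{maind} I would mimic Section~\ref{cimpliesd}. The goal is to show that the family of ``bad'' dyadic cubes $Q\subset\partial\Omega$ — those at whose scales the exterior corkscrew condition fails — satisfies a Carleson packing condition, now stated relative to $\omega_L$ and $\sigma$. Granting this, the same sawtooth construction as in Section~\ref{cimpliesd} produces bounded chord-arc subdomains $\Omega_{\mc{F},Q_0}\subset\Omega$ whose boundaries cover $\partial\Omega$ up to a $\sigma$-null set, which is precisely \eqref{maind}. The only place harmonicity entered in Section~\ref{cimpliesd} is the ``integration by parts''/localized square function estimate that converts $\sigma\ll\omega$ into the packing bound; here I would substitute the analogous argument of \cite{HMT}, which is available for $L$ because $A$ is real, symmetric, $A\in{\rm Lip}_{\rm loc}(\Omega)$, and satisfies the qualitative Carleson condition~\eqref{main-A-Car}. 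On each surface ball $B(y,r)\cap\partial\Omega$ the relevant square-function estimate holds with a constant controlled by $C_{B(y,r)}$, which is finite, and this localized estimate is exactly what the stopping-time argument needs in order to yield the qualitative conclusion \eqref{maind}.

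For \eqref{mainb}$\Rightarrow$\eqref{mainevc} I would follow Section~\ref{bimpliese}. From \eqref{mainb}$\Rightarrow$\eqref{maind} (Section~\ref{bimpliesd}) we already have a decomposition $\partial\Omega=F_0\cup\bigcup_N F_N$ with $\sigma(F_0)=0$ and $F_N=\partial\Omega\cap\partial\Omega_N$ for bounded chord-arc domains $\Omega_N=\Omega_{\mc{F},Q_0}\subset\Omega$. Since the restriction of $A$ to $\Omega_N$ still satisfies the hypotheses of the theorem, (the qualitative form of) \cite{KP} applies in $\Omega_N$ and gives $\omega_{L,\Omega_N}\in A_\infty(\sigma)$ on $\partial\Omega_N$; in particular there are $C_N>1$ and $\theta_N,\theta_N'>0$ with $C_N^{-1}\sigma(F)^{\theta_N'}\le\omega_{L,\Omega_N}(F)\le C_N\,\sigma(F)^{\theta_N}$ for all $F\subset\partial\Omega_N$. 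It then remains to transfer these bounds from $\omega_{L,\Omega_N}$ (elliptic measure of the subdomain) to $\omega_L$ (elliptic measure of $\Omega$) on the portion $F_N$, which is done exactly as in Section~\ref{bimpliese} via the variant of the Dahlberg--Jerison--Kenig sawtooth lemma and the projection operator; that comparison is valid for $L$ thanks to the locally Lipschitz character of $A$ and~\eqref{main-A-Car}. Refining the $F_N$ and relabelling yields \eqref{mainevc}.

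The main obstacle is checking that the two imported ingredients — the \cite{HMT}-type localized square function/integration-by-parts estimate used for \eqref{maincvc}$\Rightarrow$\eqref{maind}, and the \cite{KP}-type statement $\omega_L\in A_\infty(\sigma)$ on chord-arc domains used for \eqref{mainb}$\Rightarrow$\eqref{mainevc} — genuinely carry over to our operators under the \emph{qualitative} hypothesis~\eqref{main-A-Car}, which (unlike a Carleson measure condition with a uniform constant) supplies bounds depending on the ball. The resolution, in both places, is that no global quantitative conclusion on $\partial\Omega$ is required: we only need absolute continuity (for \eqref{maincvc}$\Rightarrow$\eqref{maind}) or an $A_\infty$ statement on each \emph{fixed bounded} chord-arc subdomain (for \eqref{mainb}$\Rightarrow$\eqref{mainevc}), and on any such bounded region the constant $C_B$ in~\eqref{main-A-Car} is finite, so the arguments of \cite{KP} and \cite{HMT} run with constants depending on that region. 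The remaining bookkeeping — that the subdomains $\Omega_N$ inherit the hypotheses on $A$, and that $\theta_N,\theta_N',C_N$ are permitted to depend on $N$ — is routine.
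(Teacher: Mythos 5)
Your proposal is correct and follows essentially the same route as the paper: treat \eqref{mainevc}$\Rightarrow$\eqref{maincvc} as trivial, prove \eqref{maincvc}$\Rightarrow$\eqref{maind} by re-running Section~\ref{cimpliesd} with the integration-by-parts/localized square function estimate of \cite{HMT} replacing the harmonic computation, and prove \eqref{mainb}$\Rightarrow$\eqref{mainevc} by re-running Section~\ref{bimpliese} with \cite{KP} supplying $A_\infty$ on the chord-arc sawtooth subdomains. One small imprecision worth flagging: \cite{KP} is stated for \emph{Lipschitz} domains, so to conclude $\omega_{L,\Omega_N}\in A_\infty(\partial\Omega_N)$ for the chord-arc domains $\Omega_N$ you still need, as the paper spells out, the David--Jerison ``interior big pieces of Lipschitz subdomains'' property of chord-arc domains together with a maximum principle argument to transfer the $A_\infty$ conclusion upward; this is a standard combination but is not literally ``applying \cite{KP} in $\Omega_N$.''
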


By an easy compactness argument, to invoke Theorem \ref{mainvarc} it is enough to verify that \eqref{main-A-Car} holds on balls $B=B(x,R_x)$ for every $x\in\pom$, for some $R_x>0$ depending on $x$. Examples of operators $Lu:=-\div (A\,\nabla u)$ where this result applies include the case of coefficients  $A$ which are locally Lipschitz
in $\Omega$,  with $|\nabla A|\in L^\infty(B(x,r_x)\cap\Omega)$ for every $x\in\pom$, for some $r_x>0$. More generally, one may assume that there is $\epsilon>0$ such that $|\nabla A(X)|\,\delta(X)^{1-\epsilon}\to 0$ as $X\to x$ along $X\in\Omega$ for every $x\in\partial\Omega$.

The proof of Theorem \ref{mainvarc} is given in Section \ref{variablecase}. Note that \eqref{mainevc} easily implies \eqref{maincvc}. To complete the proof, we show that \eqref{maincvc} implies \eqref{maind} in Section \ref{ss:cprime-d} and \eqref{mainb} implies \eqref{mainevc} in Section \ref{ss:b-cprime}.

Finally, in Section \ref{section:example},  we construct an example of a domain $\Omega_\star$ satisfying the required background hypotheses (i.e., 1-sided NTA with ADR boundary) for which \eqref{maina}--\eqref{maine} in Theorem \ref{main} hold, but (i)--(iv) in \eqref{AHMNT} fail. In particular, for this example, harmonic measure (and the elliptic measures in Theorem \ref{mainvarc}) belongs to neither $A_\infty$ nor to weak-$A_\infty$, but nevertheless satisfies the weaker absolutely continuity conditions \eqref{mainc}, \eqref{maine} (and \eqref{maincvc}, \eqref{mainevc}).

We note that some interesting related work has recently appeared, or been carried out, while this manuscript was in preparation, due to  Mourgoglou
\cite{Mo}, Azzam, Mourgoglou and Tolsa \cite{AMT2}, and Mayboroda, Tolsa, Volberg and the two last authors of the present
paper \cite{HMMTV} which sharpens our results
in the special case of Laplace's equation.  In the first manuscript,
the author obtains the implication \eqref{maina} implies \eqref{mainc} of our Theorem \ref{main}, but with the upper ADR bound on $\partial\Omega$
replaced by the weaker qualitative condition that $\mathcal{H}^n|_{\pom}$ is
locally finite. Moreover,
in \cite{AMT2}, the authors obtain the converse direction
\eqref{mainc} implies \eqref{maina} (as well as results concerning rectifiability of harmonic measure,
provided that $\hm\ll \sigma$),
replacing the ADR hypothesis
by the weaker qualitative assumption that
$\mathcal{H}^n|_{\pom}$ is positive and locally finite, and assuming
only a  ``porosity"
(i.e., Corkscrew) condition in the complement of $\pom$, in lieu of the stronger
1-sided NTA  assumption.  In \cite{HMMTV} the same result is proved removing the porosity assumption.
Both \cite{AMT2} and the follow-up version \cite{HMMTV}, merged into the paper \cite{AHMMMTV},
 rely on recent deep results
of \cite{NToV}, \cite{NToV2}, concerning connections between rectifiability and the behavior of
Riesz transforms.    The use of these Riesz transform results allows for non-trivial
weakening of the hypotheses as described above,  but on the other hand, the Riesz transforms are tied explicitly to harmonicity.
Our methods in the present paper, involving localized square function estimates,
seem to require a stronger connectivity hypothesis
(i.e., the 1-sided NTA, also known as ``uniform domain", assumption), but are more robust in the sense that
they allow treatment of variable coefficient operators.

A related result for harmonic measure and $p$-harmonic measure has been also obtained  by the last  two authors of the present paper in collaboration with Le and Nystr\"om \cite{HLMN} (see \cite{HM-Ainfty-UR} for a version just containing the harmonic case): if $\Omega$ is an open set with ADR boundary and harmonic measure satisfies a weak-$A_\infty$ condition on $\pom$ then $\pom$ is Uniformly Rectifiable. This corresponds to a quantitative version of the implication \eqref{mainc} implies \eqref{maina} of our Theorem \ref{main} in a setting without connectivity assumptions. The converse of this result, that is, that the complement of a Uniformly Rectifiable set has ``interior big pieces of good harmonic measure estimates'', has been recently proved by Bortz and the third author of this paper \cite{BoH}. This can be seen as a quantitative version of \eqref{mainc} (or \eqref{maine}) implies \eqref{maina} in Theorem \ref{main}.

\subsection{Notation and conventions}
\label{notconv}
\begin{list}{$\bullet$}{\leftmargin=0.4cm  \itemsep=0.2cm}

\item We use the letters $c,C$ to denote harmless positive constants, not necessarily the same at each occurrence, which depend on at most dimension and the
constants appearing in the hypotheses of the theorems (that is, on ``allowable parameters''). Unless otherwise specified, upper case constants are greater than $1$  and lower case constants are smaller than $1$. We write $a\lesssim b$ or $a \approx b$ to denote $a \leq C b$ or $0< c \leq a/b\leq C$ for some constants $c$ and $C$ following the convention above, respectively.

\item Given a domain $\Omega \subset \ree$, we shall
use lower case letters $x,y,z$, etc.~ to denote points on $\partial \Omega$, and capital letters
$X,Y,Z$, etc.~to denote generic points in $\ree$ (especially those in $\ree\setminus \partial\Omega$).

\item The open $(n+1)$-dimensional Euclidean ball of radius $r$ will be denoted
$B(x,r)$ when the center $x$ lies on $\partial \Omega$, and denoted $B(X,r)$ when the center
$X \in \ree\setminus \partial\Omega$.  A {\it surface ball} is denoted
$\Delta(x,r):= B(x,r) \cap\partial\Omega.$

\item If $\pom$ is bounded, it is always understood (unless otherwise specified) that all surface balls have radii controlled by the diameter of $\pom$: that is, if $\Delta=\Delta(x,r)$, then $r\lesssim \diam(\pom)$. Note that in this way $\Delta=\pom$ if $\diam(\pom)<r\lesssim \diam(\pom)$.



\item Let $\dist(A,B):=\inf_{a\in A}\inf_{b\in B}|a-b|$ denote the usual Euclidean distance between sets $A$ and $B$. For $X \in \ree$, let $\delta(X):= \dist(X,\partial\Omega)$.

\item Let $\rh^n$ denote $n$-dimensional Hausdorff measure and let $\sigma := \rh^n\big|_{\partial\Omega}$ denote the surface measure on $\partial \Omega$.


\item For a generic set $A\subset \ree$, we let $\interior(A)$ denote the interior of $A$. However, when $A\subset \partial\Omega$, we let $\interior(A)$ denote the interior of $A$ relative to $\partial\Omega$; that is, $\interior(A)$ is the largest relatively open set in $\partial\Omega$ contained in $A$. In addition, for $A\subset \partial\Omega$, we define
the boundary $\partial A := \overline{A} \setminus {\rm int}(A)$ using our convention on $\interior(A)$.



\item We shall use the letter $I$ (and sometimes $J$)
to denote a closed $(n+1)$-dimensional Euclidean cube with sides
parallel to the co-ordinate axes, and we let $\ell(I)$ denote the side length of $I$.

\item We use $Q$ to denote a dyadic ``cube''
on $\partial \Omega$, which exists whenever $\partial \Omega$ is ADR  (see \cite{Ch,DS1}) and enjoy certain properties
enumerated in Lemma \ref{lemma:Christ} below.
\end{list}

\subsection{Some definitions}
\label{ssdefs} 

\begin{definition}[\bf Ahlfors-David regular]
\label{def-ADR}
 We say that a closed set $E \subset \ree$ is $n$-dimensional ADR (or simply ADR) if
there is some uniform constant $C$ such that
\begin{align*}
\frac1C\, r^n \leq \rh^{n}(E\cap B(x,r)) \leq C\, r^n \quad \forall\, r\in(0,\diam(E)),\, x \in E.
\end{align*}
\end{definition}
Following \cite{JK}, we state the definition of Corkscrew condition, Harnack Chain condition, and NTA domains.

\begin{definition}[\bf Corkscrew condition]
We say that an open set $\Omega\subset \ree$
satisfies the (interior) {\it Corkscrew condition} if for some uniform constant $c$, $0<c<1$, and
for every surface ball $\Delta:=\Delta(x,r),$ with $x\in \partial\Omega$ and
$0<r<\diam(\partial\Omega)$, there is a ball
$B(X_\Delta,cr)\subset B(x,r)\cap\Omega$.  The point $X_\Delta\subset \Omega$ is called
a (interior) {\it corkscrew point relative to} $\Delta,$ (or, relative to $B$). We note that  we may allow
$r<C\diam(\pom)$ for any fixed $C$, simply by adjusting the constant $c$.

Analogously, we say that an open set $\Omega\subset \ree$ satisfies the exterior {\it Corkscrew condition}
if the open set $\Omega_{\rm ext}=\ree\setminus\overline{\Omega}$ satisfies the (interior) Corkscrew condition. Also, if we say that $X_\Delta$ is  an {\it exterior corkscrew point relative to} $\Delta,$ (or, relative to $B$), we mean that $X_\Delta\in \Omega_{\rm ext}$ is  an (interior) {\it corkscrew point relative to} $\Delta,$ (or, relative to $B$) for the open set $\Omega_{\rm ext}$. 
\end{definition}

\begin{definition}[\bf Harnack Chain condition]
We say that $\Omega$ satisfies the {\it Harnack Chain condition} if there is a uniform constant $C$ such that
for every $\rho >0,\, \Lambda\geq 1$, and every pair of points
$X,X' \in \Omega$ with $\delta(X),\,\delta(X') \geq\rho$ and $|X-X'|<\Lambda\,\rho$, there is a chain of
open balls
$B_1,\dots,B_N \subset \Omega$, $N\leq C(\Lambda)$,
with $X\in B_1,\, X'\in B_N,$ $B_k\cap B_{k+1}\neq \emptyset$
and $C^{-1}\diam (B_k) \leq \dist (B_k,\partial\Omega)\leq C\diam (B_k).$  The chain of balls is called
a {\it Harnack Chain}.
\end{definition}

\begin{definition}[\bf 1-sided NTA domain]
\label{def-1nta}
If $\Omega$ satisfies both the Corkscrew and Harnack Chain conditions, then we say that
$\Omega$ is a {\it 1-sided NTA domain}.
\end{definition}
\begin{definition}[\bf NTA domain]
\label{def-nta}
We say that a domain $\Omega$ is an {\it NTA  domain} if
it is a 1-sided NTA domain and if, in addition, $\om_{\rm ext}:= \ree\setminus \overline{\Omega}$
also satisfies the Corkscrew condition.
\end{definition}
\begin{remark} The abbreviation NTA stands for non-tangentially accessible. In the literature, 1-sided NTA domains are also called \textit{uniform domains}. We remark that the 1-sided NTA condition is a quantitative form of path connectedness.
\end{remark}

\begin{definition}[\bf Chord-arc domain]
A \emph{chord-arc domain} $\Omega$ is an NTA domain with ADR boundary.
\end{definition}

We next give definition of rectifiability. For general background, see \cite{Ma95}.
\begin{definition}[{\bf Rectifiability}]
\label{rectifiable}
A set in $E\subset \mathbb{R}^{n+1}$ is called $n$-rectifiable if there exist Lipschitz maps $f_{i}:\mathbb{R}^{n}\to \mathbb{R}^{n+1}$, $i=1,2,\ldots,$ such that
\[
\rh^{n}\left(E\setminus \bigcup\limits_{i=1}^{\infty}f_{i}(\mathbb{R}^{n})\right)=0.
\]
\end{definition}

\begin{definition}
[{\bf $A_\infty$, weak-$A_\infty$}]\label{defi-Ainfty}
Given $\Omega\subset\ree$, a 1-sided NTA domain with ADR boundary, let $\omega$ be the associated harmonic measure (or some other elliptic measure). We say that $\omega$ is weak-$A_\infty$ if there exist positive constants $C$ and $\theta$ (depending on $n$ and the 1-sided NTA and ADR constants) such that for every surface ball $\Delta_0=B_0\cap \pom$, with $B_0$ centered at $\pom$ and radius smaller than $\diam(\pom)$, and for every surface ball $\Delta=B\cap \pom$, with $B$ centered at $\pom$ and $2\, B \subset B_0$,
\begin{equation}\label{eq1.wainfty}
\omega^{X_{\Delta_0}} (F) \leq C \left(\frac{\sigma(F)}{\sigma(\Delta)}\right)^\theta\,\omega^{X_{\Delta_0}} (2\,\Delta),
\qquad \forall\,F\subset \Delta.
\end{equation}
Analogously, we say that $\omega\in A_\infty$ if the previous condition holds with $B\subset B_0$, in place of $2\, B \subset B_0$, and if we can write  $\omega^{X_{\Delta_0}} (\Delta)$, in place of $\omega^{X_{\Delta_0}} (2\,\Delta)$, in the right hand side of \eqref{eq1.wainfty}
\end{definition}

\subsection{Dyadic grids and sawtooths}
\label{ss:grid}
In this subsection we give a lemma concerning the existence of ``dyadic grid'' which can be found in \cite{DS1,DS2,Ch}.

\begin{lemma}[\bf Existence and properties of the ``dyadic grid'']
\label{lemma:Christ}
If $E\subset \ree$ is ADR, then there exist
constants $ a_0>0$, $\eta>0$, and $C_1<\infty$, depending only on dimension and the
ADR constant, and for each $k \in \mathbb{Z}$
there exists a collection of Borel sets (``cubes'')
$$
\mathbb{D}_k:=\{Q_{j}^k\subset E: j\in \mathfrak{I}_k\},$$ where
$\mathfrak{I}_k$ denotes some (possibly finite) index set depending on $k$, satisfying the following properties.

\begin{list}{$(\theenumi)$}{\usecounter{enumi}\leftmargin=.8cm
\labelwidth=.8cm\itemsep=0.2cm\topsep=.1cm
\renewcommand{\theenumi}{\roman{enumi}}}

\item $E=\bigcup_{j}Q_{j}^k\,\,$ for each
$k\in{\mathbb Z}$.

\item If $m\geq k$ then either $Q_{i}^{m}\subset Q_{j}^{k}$ or
$Q_{i}^{m}\cap Q_{j}^{k}=\emptyset$.

\item For each $(j,k)$ and each $m<k$, there is a unique
$i$ such that $Q_{j}^k\subset Q_{i}^m$.

\item The diameter of each $Q_{j}^k$ is at most $C_12^{-k}$.

\item Each $Q_{j}^k$ contains some surface ball $\Delta \big(x^k_{j},a_02^{-k}\big):=
B\big(x^k_{j},a_02^{-k}\big)\cap E$.

\item $\rh^{n}\left(\left\{x\in Q^k_j:{\rm dist}(x,E\setminus Q^k_j)\leq \tau \,2^{-k}\right\}\right)\leq
C_1\,\tau^\eta\,\rh^{n}\left(Q^k_j\right)$ for all $k$ and $j$ and for all $\tau\in (0,a_0)$.
\end{list}
\end{lemma}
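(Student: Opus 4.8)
The plan is to reproduce the construction of a ``dyadic grid'' on an ADR set, due to M.~Christ (and, independently, David--Semmes); since the lemma is essentially their result I will only sketch the main steps and flag where the real work lies. \emph{Nets.} For each $k\in\ZZ$, use Zorn's lemma to choose a maximal $2^{-k}$-separated subset $\{x_j^k:j\in\mathfrak{I}_k\}$ of $E$; maximality forces $E=\bigcup_{j}B(x_j^k,2^{-k})$, $2^{-k}$-separation makes the balls $B(x_j^k,2^{-k-1})$ pairwise disjoint, and the upper ADR bound then shows each $\mathfrak{I}_k$ is countable and locally finite. \emph{Ancestry.} Encode the tree structure by a partial order: to each $(i,k+1)$ assign a ``parent'' $(\widehat{j},k)$ with $|x_i^{k+1}-x_{\widehat{j}}^k|$ minimal (ties broken by a fixed well-ordering of the index sets), noting $|x_i^{k+1}-x_{\widehat{j}}^k|\le 2^{-k}$ since the level-$k$ balls cover $E$; then declare $(i,m)\preceq(j,k)$, for $m\ge k$, whenever $(i,m)$ reaches $(j,k)$ through a chain of parent relations, so that every $(i,m)$ has a unique ancestor at each coarser level --- which is precisely what produces the nesting properties (ii) and (iii).

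\emph{Cubes and the easy properties.} Fix a small $a_0>0$ depending only on $n$ and the ADR constant, set $Q_j^{k,\ast}:=\bigcup\{\overline{B}(x_i^m,a_02^{-m}):(i,m)\preceq(j,k)\}$, intersect with $E$, and resolve the boundedly many overlaps at each fixed level $k$ by assigning each point to the cube of least index; with $a_0$ small the sets $Q_j^{k,\ast}$ at a fixed level essentially tile $E$ up to an $\rh^n$-null set, which gives the covering property (i). Property (iv) holds because any descendant center satisfies $|x_i^m-x_j^k|\le\sum_{\ell=k}^{m-1}2^{-\ell}\lesssim 2^{-k}$ along its ancestry chain, and property (v) because $(j,k)\preceq(j,k)$, so $Q_j^k$ contains the surface ball $\Delta(x_j^k,a_02^{-k})$ (after shrinking $a_0$ slightly so the overlap resolution cannot eat a surface ball of comparable radius at $x_j^k$).

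\emph{Thin boundary, property (vi), is the hard part.} By construction the relative boundary of $Q_j^k$ in $E$ lies in a neighbourhood of the union of the interfaces $\partial B(x_i^m,a_02^{-m})$ between scale-$m$ cubes and their neighbours, $m>k$. The plan is a self-improving scheme across scales: express $\{x\in Q_j^k:\dist(x,E\setminus Q_j^k)\le\tau 2^{-k}\}$ as a union of scale-$m$ subcubes of $Q_j^k$ that lie within $\approx 2^{-m}$ of $E\setminus Q_j^k$, with $2^{-m}\approx\tau 2^{-k}$, count and locate these using the lower ADR bound, and control the measure they occupy by bootstrapping the same type of estimate one scale coarser; choosing $a_0$ sufficiently small forces a fixed multiplicative gain per scale, so the resulting geometric series sums to $C_1\tau^{\eta}\rh^n(Q_j^k)$ for some $\eta>0$, using the two ADR bounds to pass freely between $\rh^n(Q_j^k)$ and $(2^{-k})^n$. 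This step --- formulating the correct inductive statement and pinning down $a_0$ so the per-scale estimate genuinely improves rather than merely staying bounded --- is the only real obstacle; everything before it is soft once the ancestry device is in place. If a cleaner nesting is wanted, one may first run the whole construction at scales $2^{-Mk}$, $k\in\ZZ$, for a large fixed integer $M$, and then relabel. Throughout, $a_0$, $\eta$, $C_1$ depend only on $n$ and the ADR constant of $E$.
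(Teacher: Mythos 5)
The paper does not actually prove Lemma \ref{lemma:Christ}; it is stated and then cited to Christ \cite{Ch} (and, in the ADR setting, to David--Semmes \cite{DS1,DS2}), with the remark that the dyadic parameter may be taken equal to $1/2$ by \cite{HMMM}. Your outline faithfully reproduces Christ's construction --- maximal $2^{-k}$-separated nets, a parent/ancestor relation, cubes as (traces on $E$ of) unions of descendant balls --- so you are following the same source the paper invokes rather than a different route. Two points in your sketch need more care than you allow them. First, ``resolve the boundedly many overlaps at each level by least index'' does not automatically yield a \emph{nested} family: if each $\mathbb{D}_k$ is tiled independently, a level-$(k+1)$ cube can straddle two level-$k$ cubes after the tie-breaking. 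The standard construction avoids this by building the partition recursively across levels (each $Q^{k+1}_i$ is assigned in its entirety to the unique $Q^k_j$ containing its center), so that nesting holds by fiat; this also keeps the sets Borel with controlled boundaries. Second, running the construction at scales $2^{-Mk}$ and ``relabeling'' produces a grid only at the lacunary scales $\{2^{-Mk}\}_{k\in\ZZ}$, not at every integer power of $2$ as the lemma asserts (``for each $k\in\ZZ$''); obtaining genuine dyadic steps $\delta=1/2$ requires the interpolation argument of \cite{HMMM}, not a relabeling. Finally, you are right that (vi) carries the weight: Christ's proof devotes its effort precisely to the inductive statement and the smallness of the base parameter that yield a strict per-scale gain, and your sketch correctly identifies but does not supply that mechanism.
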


Some notations and remarks are in order concerning this lemma.

\begin{list}{$\bullet$}{\leftmargin=0.4cm  \itemsep=0.2cm}

\item In the setting of a general space of homogeneous type, this lemma has been proved by Christ
\cite{Ch}, with the
dyadic parameter $1/2$ replaced by some constant $\delta \in (0,1)$.
In fact, one may always take $\delta = 1/2$ (cf.  \cite[Proof of Proposition 2.12]{HMMM}).
In the presence of ADR property, the result already appears in \cite{DS2,DS1}.

\item  For our purposes, we may ignore those
$k\in \mathbb{Z}$ such that $2^{-k} \gtrsim {\rm diam}(E)$ whenever $E$ is bounded.

\item  We shall denote by  $\mathbb{D}=\mathbb{D}(E)$ the collection of all relevant
$Q^k_j$. That is, $$\mathbb{D} := \bigcup\limits_{k} \mathbb{D}_k,$$
where the union runs only
over those $k$ such that $2^{-k} \lesssim  {\rm diam}(E)$ whenever $E$ is bounded.

\item Given a cube $Q\in \dd$, we set
\[
\dd_Q:= \left\{Q'\in \dd: Q'\subseteq Q\right\}.
\]

\item For a dyadic cube $Q\in \mathbb{D}_k$, we
set $\ell(Q) = 2^{-k}$ and we call this quantity the ``length''
of $Q$.  Evidently, $\ell(Q)\approx \diam(Q).$


\item Properties $(iv)$ and $(v)$ imply that for each cube $Q\in\mathbb{D}_k$,
there exists a point $x_Q\in E$, a Euclidean ball $B(x_Q,r_Q)$ and corresponding surface ball
$\Delta(x_Q,r_Q):= B(x_Q,r_Q)\cap E$ such that
\begin{equation}\label{DeltaQ}
c\ell(Q)\leq r_Q\leq \ell(Q)
\qquad\mbox{and}\qquad\Delta(x_Q,2r_Q)\subset Q \subset \Delta(x_Q,Cr_Q)
\end{equation}
for some uniform constants $c$ and  $C$.
We shall denote this ball and surface ball by $B_Q:= B(x_Q,r_Q)$ and $\Delta_Q:= \Delta(x_Q,r_Q)$, respectively,
and we shall refer to the point $x_Q$ as the ``center'' of $Q$.


\end{list}

It will be useful  to dyadicize the Corkscrew condition and to specify
precise Corkscrew constants.
Let us now specialize to the case that  $E=\pom$ is ADR
with $\Omega$ satisfying the Corkscrew condition.
Given $Q\in \mathbb{D}(\partial\Omega)$, we
shall sometimes refer to a corkscrew point $X_Q$ relative to $Q$, which define to be a corkscrew point $X_\Delta$ relative to the surface ball
$\Delta:=\Delta_Q$ 
We note that $\delta(X_Q) \approx \dist(X_Q,Q) \approx \diam(Q)$.
\begin{definition}[{\bf $c_0$-exterior
Corkscrew condition}]
\label{def1.dyadcork}
Fix  a constant $c_0\in (0,1)$ and let
$\Omega\subset \ree$ be a domain with ADR boundary.  We say
that a cube $Q\in \dd(\pom)$ satisfies the
the $c_0$-exterior Corkscrew condition if
there is a point $z_Q\in \Delta_Q$ and a point $X^-_Q\in B(z_Q,r_Q/4) \setminus \overline{\Omega}$ such that
$B(X^-_Q,\,c_0\,\ell(Q))\subset B(z_Q,r_Q/4)\setminus \overline{\Omega}$, where
$\Delta_Q=\Delta(x_Q,r_Q)$ is the surface ball associated to $Q$.
\end{definition}
Following \cite[Section 3]{HM} we next introduce the notion of \textit{\bf Carleson region} and \textit{\bf discretized sawtooth}.  Given a cube $Q\in\dd(\partial\Omega)$, the \textit{\bf discretized Carleson region $\dd_{Q}$} relative to $Q$ is defined by
\[
\dd_{Q}=\{Q'\in\dd(\partial\Omega):\, \, Q'\subset Q\}.
\]
Let $\F$ be family of disjoint cubes $\{Q_{j}\}\subset\dd(\partial\Omega)$. The \textit{\bf global discretized sawtooth region} relative to $\F$ is the collection of cubes $Q\in\dd$  that are not contained in any $Q_{j}\in\F$;
\[
\dd_{\F}:=\dd\setminus \bigcup\limits_{Q_{j}\in\F}\dd_{Q_{j}}.
\]
For a given $Q\in\dd$ the {\bf local discretized sawtooth region} relative to $\F$ is the collection of cubes in $\dd_{Q}$ that are not in contained in any $Q_{j}\in\F$;
\[
\dd_{\F,Q}:=\dd_{Q}\setminus \bigcup\limits_{Q_{j}\in \F} \dd_{Q_{j}}=\dd_{\F}\cap \dd_{Q}.
\]
We also introduce the ``geometric'' Carleson regions and sawtooths. In the sequel, $\Omega \subset \ree$ ($n\geq 2$) will be a 1-sided NTA domain with ADR boundary. Let $\mathcal{W}=\W(\Omega)$ denote a collection
of (closed) dyadic Whitney cubes of $\Omega$  (see \cite[Chapter VI]{St}), so that the cubes in $\mathcal{W}$
form a covering of $\Omega$ with non-overlapping interiors, and  which satisfy
\begin{equation}\label{eqWh1} 4\, {\rm{diam}}\,(I)\leq \dist(4 I,\pom) \leq  \dist(I,\pom) \leq 40 \, {\rm{diam}}\,(I)\end{equation}
and
\begin{equation}\label{eqWh2}\diam(I_1)\approx \diam(I_2), \mbox{ whenever $I_1$ and $I_2$ touch.}\end{equation}
Let $X(I)$ denote the center of $I$, let $\ell(I)$ denote the side length of $I$,
and write $k=k_I$ if $\ell(I) = 2^{-k}$.

Given $0<\lambda<1$ and $I\in\W$ we write $I^*=(1+\lambda)I$ for the ``fattening'' of $I$. By taking $\lambda$ small enough,  we can arrange matters so that, first, $\dist(I^*,J^*) \approx \dist(I,J)$ for every
$I,J\in\W$, and secondly, $I^*$ meets $J^*$ if and only if $\partial I$ meets $\partial J$.
 (Fattening ensures $I^*$ and $J^*$ overlap for
any pair $I,J \in\W$ whose boundaries touch. Thus, the Harnack Chain property holds locally in $I^*\cup J^*$ with constants depending on $\lambda$.)  By picking $\lambda$ sufficiently small, say $0<\lambda<\lambda_0$, we may also suppose that there is $\tau\in(1/2,1)$ such that for distinct $I,J\in\W$, $\tau J\cap I^* = \emptyset$. In what follows we will need to work with dilations $I^{**}=(1+2\,\lambda)I$ and in order to ensure that the same properties hold we further assume that $0<\lambda<\lambda_0/2$.

For every $Q$ we can construct a family $\W_Q^*\subset \W$ and define
\begin{equation}\label{eq2.whitney3}
U_Q := \bigcup_{I\in\,\mathcal{W}^*_Q} I^*\,,
\end{equation}
satisfying the following properties:
$X_Q\in U_Q$ and there are uniform constants $k^*$ and $K_0$ such that
\begin{eqnarray}\label{eq2.whitney2}
& k(Q)-k^*\leq k_I \leq k(Q) + k^*\, \quad & \forall\, I\in \mathcal{W}^*_Q,\\\nonumber
&X(I) \rightarrow_{U_Q} X_Q\,\quad &\forall\, I\in \mathcal{W}^*_Q,\\ \nonumber
&\dist(I,Q)\leq K_0\,2^{-k(Q)}\, \quad &\forall\, I\in \mathcal{W}^*_Q\,.
\end{eqnarray}
Here $X(I) \rightarrow_{U_Q} X_Q$ means that the interior of $U_Q$ contains all the balls in
a Harnack Chain (in $\Omega$) connecting $X(I)$ to $X_Q$, and  moreover, for any point $Z$ contained
in any ball in the Harnack Chain, we have $
\dist(Z,\pom) \approx \dist(Z,\Omega\setminus U_Q)$
with uniform control of the implicit constants.
The constants  $k^*$, $K_0$ and the implicit constants in the condition $X(I)\to_{U_Q} X_Q$ in \eqref{eq2.whitney2}
depend on at most allowable
parameters and on $\lambda$. The reader is referred to \cite{HM} for full details.

For a given $Q\in\dd$, the {\bf Carleson box} relative to $Q$ is defined by
\[
T_{Q}:=\mbox{int}\left(\bigcup\limits_{Q'\in\dd_{Q}} U_{Q'}\right).
\]
For a given family $\F$ of disjoint cubes $\{Q_{j}\}\subset\dd$, the {\bf global sawtooth region} relative to $\F$ is
\[
\Omega_{\F}:=\mbox{int}\left(\bigcup\limits_{Q'\in\dd_{\F}} U_{Q'}\right).
\]
Finally, for a given $Q\in\dd$ we define the {\bf local sawtooth region} relative to $\F$ by
\[
\Omega_{\F,Q}:=\mbox{int}\left(\bigcup\limits_{Q'\in\dd_{\F,Q}}U_{Q'}\right).
\]
For later use we recall \cite[Proposition 6.1]{HM}:
 \begin{align}
\label{prop61}
Q\setminus \bigg( \bigcup_{Q_{j}\in \F} Q_{j}\bigg) \subset \partial\Omega \cap\partial\Omega_{\F,Q}
\subset
\overline{Q}\setminus \bigg( \bigcup_{Q_{j}\in \F} \interior(Q_{j})\bigg).
 \end{align}

Given a pairwise disjoint family $\F\in\dd$  and a constant $\rho>0$, we derive another family $\F({\rho})\subset\dd$  from $\F$ as follows. Augment $\F$ by adding cubes $Q\in\dd$ whose side length $\ell(Q)\leq \rho$ and let $\F(\rho)$ denote the corresponding collection of maximal cubes. Note that the corresponding discrete sawtooth region $\dd_{\F(\rho)}$ is the union of all cubes $Q\in\dd_{\F}$ such that $\ell(Q)>\rho$.
For a given constant $\rho$ and a cube $Q\in \dd$, let $\dd_{\F(\rho),Q}$ denote the local discrete sawtooth region and let $\Omega_{\F(\rho),Q}$ denote the geometric sawtooth region relative to disjoint family $\F(\rho)$.

\section{Proof of Theorem \ref{main}}\label{sect:proof-main}

\subsection{Proof of \eqref{maina} implies \eqref{mainb}}
\label{rectandapprox}

Our goal in this section is to prove that \eqref{maina} implies \eqref{mainb} in Theorem \ref{main}. To this purpose, we first recall a useful notion from geometric measure theory. For any affine $n$-plane $\mc{P}$ and $\eta>0$, set $\mc{P}(\eta):=\{X:\, \dist(X,\mc{P})\leq \eta\}$.
\begin{definition}[{\bf Linear approximation}]
\label{approximable}
A set $E$ in $\mathbb{R}^{n+1}$ is called $n$-linearly approximable if for $\rh^{n}$-a.e.~$a\in E$ the following holds: if $\eta$ is a positive number, there are positive numbers $r_{0}, \lambda$ and an affine $n$-plane $\mc{P}$ such that $a\in \mc{P}$ and for any $0<r<r_{0}$,
\begin{list}{$(\theenumi)$}{\usecounter{enumi}\leftmargin=.8cm
\labelwidth=.8cm\itemsep=0.2cm\topsep=.1cm
\renewcommand{\theenumi}{\roman{enumi}}}

\item $\rh^{n}(E\cap B(X, \eta r))\geq \lambda r^{n}$, for every $X\in \mc{P}\cap B(a,r)$;

\item  $\rh^{n}(E\cap B(a,r)\setminus \mc{P}(\eta r))<\eta r^{n}$.
\end{list}
\end{definition}
See Figure \ref{linearapproxfigure} for visualization of Definition \ref{approximable}.
\def\firstcircle{(0,0) circle (4cm)}
\def\secondcircle{(2,0) circle (1cm)}
\def\thirdcircle{(0,0) circle (4cm)}
\def\fourthcircle{(0,0) circle (4cm)}

\begin{figure}[!ht]
\centering
\begin{tikzpicture}[scale=.5]
\draw[name path=C1, fill=gray] \firstcircle;
\fill[white] (-6,-1) rectangle (6,1);
\node[above] at (2,1) {$B(x,\eta r)$};
\draw[name path=L1] (-5,1)--(5.5,1);
\draw[name path=L2] (-5,-1)--(5.5,-1);
\path [name intersections={of=C1 and L1, total=\n}];
\draw[red] \secondcircle;
\draw[blue] (-6,0)--(5,0);
\draw [magenta] plot [smooth] coordinates {(-3,1.8) (-2.5,2) (-2,2)};
\draw [magenta, shift={(2 cm, 2 cm)},rotate=20] plot [smooth] coordinates {(-3,1.8) (-2.5,2) (-2,2)};
\draw [magenta, shift={(3.5 cm, 1 cm)},rotate=10] plot [smooth] coordinates {(-3,1.8) (-2.5,2) (-2,2)};
\draw [magenta, shift={(5 cm, 1 cm)}] plot [smooth] coordinates {(-3,1.8) (-2.5,2) (-2,2)};
\draw [magenta, shift={(-2 cm, 0 cm)}] plot [smooth] coordinates {(-3,1.8) (-2.5,2) (-2,2)};
\begin{scope}[xshift=-1cm, yshift=-5cm]
\draw [magenta] plot [smooth] coordinates {(-3,1.8) (-2.5,2) (-2,2)};
\draw [magenta, shift={(2 cm, 2 cm)},rotate=20] plot [smooth] coordinates {(-3,1.8) (-2.5,2) (-2,2)};
\draw [magenta, shift={(3.5 cm, 1 cm)},rotate=10] plot [smooth] coordinates {(-3,1.8) (-2.5,2) (-2,2)};
\draw [magenta, shift={(5 cm, 1 cm)}] plot [smooth] coordinates {(-3,1.8) (-2.5,2) (-2,2)};
\draw [magenta, shift={(-2 cm, 0 cm)}] plot [smooth] coordinates {(-3,1.8) (-2.5,2) (-2,2)};
\end{scope}
\draw[->] (0,-2.5)--((4,-4) node[below, gray] at (4,-4) {The surface measure of the portions
of $E$};
\node[below, gray] at (4,-5) {in the gray area is smaller than $\eta r^{n}$};
\node[left,magenta] at (-5,2) {$E$};
\node[left,blue] at (-6,0) {$\mc{P}$};
\filldraw (2,0) circle (1pt);
\node[below] at (2,0){$x$};
\filldraw (0,0) circle (1pt);
\node[below] at (0,0){$a$};
\draw[->] (5.5,.3) -- (5.5,.8);
\draw[->] (5.5,-.3) -- (5.5,-.8);
\node at (5.5,0) {\qquad\ $2 \eta r $};
\node[magenta] at (0,0){\large Big chunk of $E$};
\end{tikzpicture}
\caption{Linear approximation of set $E$.}
\label{linearapproxfigure}
\end{figure}
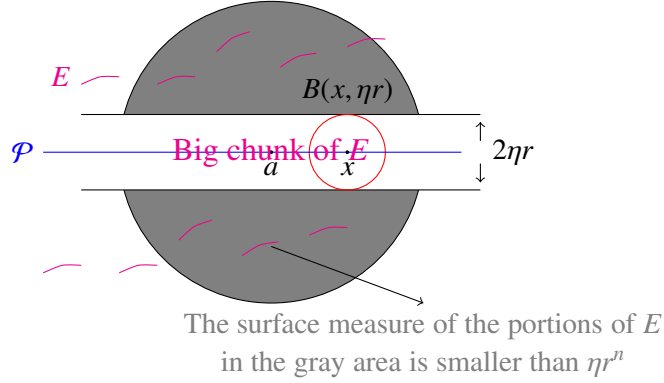
\begin{lemma}[{\cite[Theorem 15.11]{Ma95}}]
\label{Ma15.11}
If $E$ is an $\rh^{n}$ measurable $n$-rectifiable subset of $\mathbb{R}^{n+1}$ with $\rh^{n}(E)<\infty$, then $E$ is $n$-linearly approximable.
\end{lemma}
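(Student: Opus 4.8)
The plan is to reduce to the model situation in which $E$ lies inside a $C^1$ hypersurface and then to verify conditions (i) and (ii) of Definition~\ref{approximable} directly. First I would invoke the standard structure theory of rectifiable sets (everything needed is in \cite{Ma95}): since $E$ is $\rh^n$-measurable, $n$-rectifiable and $\rh^n(E)<\infty$, one may decompose $E=E_0\cup\bigcup_{i\ge1}E_i$, where $\rh^n(E_0)=0$ and the sets $E_i$, $i\ge1$, are pairwise disjoint compact subsets of $n$-dimensional embedded $C^1$ submanifolds $M_i\subset\ree$. (One covers $\rh^n$-a.e.\ point of $E$ by Lipschitz images of $\rn$, passes to pairwise disjoint Borel pieces and upgrades the Lipschitz graphs to $C^1$ submanifolds via the Whitney-type approximation of Lipschitz functions together with the area formula.) It then suffices to prove, for each fixed $i\ge1$, that conditions (i) and (ii) hold at $\rh^n$-a.e.\ $a\in E_i$, with the plane $\mc{P}=\mc{P}_a:=a+T_aM_i$, the affine tangent plane of $M_i$ at $a$ (which indeed contains $a$).

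Next I would isolate a full-measure set of ``good'' points. Let $G_i$ be the set of $a\in E_i$ at which both
\[
\lim_{r\to0}\,r^{-n}\,\rh^n\big((M_i\setminus E_i)\cap B(a,r)\big)=0
\quad\text{and}\quad
\lim_{r\to0}\,r^{-n}\,\rh^n\big((E\setminus E_i)\cap B(a,r)\big)=0 .
\]
The first limit vanishes for $\rh^n$-a.e.\ $a\in E_i$: the measure $\rh^n$ restricted to the $C^1$ manifold $M_i$ is a locally finite Radon measure with $r^{-n}\rh^n(M_i\cap B(a,r))\to c_n$, so the Lebesgue density theorem gives $\rh^n(E_i\cap B(a,r))/\rh^n(M_i\cap B(a,r))\to1$ and hence $r^{-n}\rh^n(E_i\cap B(a,r))\to c_n$ as well. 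The second limit vanishes for $\rh^n$-a.e.\ $a\in E_i$ because $E_i$ and $E\setminus E_i$ are disjoint $\rh^n$-measurable sets of finite measure, so $\rh^n$-a.e.\ point of $E_i$ has zero upper $n$-density with respect to $E\setminus E_i$ (a standard density property of Hausdorff measure, see \cite{Ma95}). Hence $\rh^n\big(E\setminus\bigcup_iG_i\big)=0$, and it is enough to treat points $a\in G_i$.

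Now fix $a\in G_i$ and $\eta\in(0,1)$. Using that $M_i$ is $C^1$, pick $\rho=\rho(a,\eta)>0$ so that, in orthonormal coordinates centered at $a$ with $\mc{P}_a$ horizontal, $M_i$ near $a$ is the graph of a $C^1$ function $\phi$ with $\phi(0)=0$, $\nabla\phi(0)=0$ and $|\nabla\phi|\le\eta/100$. For (ii), choose $r_0\le\rho/2$ small enough that $\rh^n\big((E\setminus E_i)\cap B(a,r)\big)<\eta r^n$ for all $0<r<r_0$ (possible since $a\in G_i$); the bound on $\nabla\phi$ forces $M_i\cap B(a,r)\subset\mc{P}_a(\eta r)$, hence $E_i\cap B(a,r)\subset\mc{P}_a(\eta r)$, so $\rh^n\big(E\cap B(a,r)\setminus\mc{P}_a(\eta r)\big)\le\rh^n\big((E\setminus E_i)\cap B(a,r)\big)<\eta r^n$. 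For (i), shrink $r_0$ further so that in addition $\rh^n\big((M_i\setminus E_i)\cap B(a,2r)\big)$ is smaller than half of a fixed dimensional multiple of $(\eta r)^n$ for all $0<r<r_0$ (legitimate since this quantity is $o(r^n)$, again because $a\in G_i$). Given $X\in\mc{P}_a\cap B(a,r)$, let $\Gamma$ be the portion of $M_i$ lying over the disk of $\mc{P}_a$ of radius $\eta r/2$ centered at $X$; the smallness of $\nabla\phi$ yields both $\Gamma\subset B(X,\eta r)$ and $\rh^n(\Gamma)\ge c_n(\eta r/2)^n$, while $B(X,\eta r)\subset B(a,2r)$ follows at once from $|X-a|<r$ and $\eta\le1$. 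Since $\Gamma\subset M_i$,
\[
\rh^n\big(E\cap B(X,\eta r)\big)\ge\rh^n(E_i\cap\Gamma)=\rh^n(\Gamma)-\rh^n\big((M_i\setminus E_i)\cap\Gamma\big)\ge\rh^n(\Gamma)-\rh^n\big((M_i\setminus E_i)\cap B(a,2r)\big)\ge\lambda r^n
\]
for some $\lambda=\lambda(n,\eta)>0$, which is (i). Thus both conditions hold at every $a\in G_i$, hence at $\rh^n$-a.e.\ point of $E$, so that $E$ is $n$-linearly approximable.

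I expect the only genuinely delicate ingredient to be the reduction to $C^1$ pieces, together with the careful verification that the density statements defining $G_i$ really do hold $\rh^n$-a.e.; once one is inside a single $C^1$ graph the verification of (i) and (ii) is elementary, the crucial (essentially trivial) observation being that any ball $B(X,\eta r)$ with $X\in\mc{P}_a\cap B(a,r)$ and $\eta\le1$ is contained in $B(a,2r)$, so that density zero at the \emph{single} point $a$ is enough to control the error terms uniformly over all such $X$.
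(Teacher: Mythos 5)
The paper does not prove Lemma~\ref{Ma15.11}; it cites it directly as \cite[Theorem 15.11]{Ma95}, so there is no in-paper argument to compare against. Your proof is correct and is essentially the standard one behind Mattila's theorem: decompose the finite-measure rectifiable set, up to an $\rh^n$-null set, into countably many pairwise disjoint compact pieces $E_i$ sitting inside embedded $C^1$ hypersurfaces $M_i$ (Lipschitz-to-$C^1$ upgrading via Whitney/Lusin approximation); isolate the full-measure set of points $a\in E_i$ at which both the density of $M_i\setminus E_i$ relative to $M_i$ and the upper density of $E\setminus E_i$ vanish; and verify conditions (i) and (ii) of Definition~\ref{approximable} by elementary graph estimates in tangent-plane coordinates. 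The one genuinely important observation, which you make explicitly and correctly, is that the inclusion $B(X,\eta r)\subset B(a,2r)$ for $X\in\mc{P}_a\cap B(a,r)$ and $\eta\le 1$ allows a single density-zero statement at the basepoint $a$ to control the error terms uniformly over all the competitor centers $X$ appearing in condition (i). All the density facts you invoke (the Lebesgue density theorem for $\rh^n|_{M_i}$, and the vanishing upper density of a disjoint finite-measure set) are standard and correctly applied. In short, your argument is a faithful and complete reconstruction of the cited result, so the two approaches coincide in substance even though the paper itself merely quotes Mattila.
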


We want to show that  $n$-rectifiability implies existence of two-sided corkscrews in the presence of Ahlfors-David regularity.
Fix any $n$-rectifiable, $n$-dimensional ADR set $E\subset\mathbb{R}^{n+1}$. Let us observe that the ADR condition implies that $E$ is $\rh^n$-locally finite and then $E$ is $n$-linearly approximable by Lemma \ref{Ma15.11}
(note that the $n$-linear approximability is a local property and hence   Lemma \ref{Ma15.11} immediately extends
to any $E$ having locally finite measure).  Thus, we may fix $a\in E$ for which $(i)$ and $(ii)$ in Definition \ref{approximable} holds for some $0<\eta\ll 1/4$ to be chosen and some constants $r_0$ and $\lambda$ and $n$-plane $\mc{P}$ depending on $E$, $a$ and $\eta$. After a harmless rotation and translation, we assume $a=0$ and $\mc{P}=\{x\in \mathbb{R}^{n+1}:x_{n+1}=0\}$ with $0\in \mc{P}$. Let $N\ge 1$ be a large constant to be chosen and fix $0<r<\eta\, r_0$. Set $B=B(0,r)$, $\zeta=\eta^{\frac{n}{n+1}}$, and let
$$
B^{\pm}
=\{X=(X_1,\dots, X_{n+1})\in B: \pm X_{n+1}>4\,\zeta\,r\}
$$
denote the upper and lower parts of $B\setminus \mc{P}(4\,\zeta\, r)$. We also set
\[
\Sigma =\left\{X\in\mathbb{R}^{n+1}\setminus E: \, \dist(X,E)  <\frac{4\,\zeta\,r}{N} \right\}.
\]
See Figure \ref{setBpBmSfigure}.

\def\firstcircle{(0,0) circle (4cm)}
\def\secondcircle{(2,0) circle (1cm)}
\def\thirdcircle{(0,0) circle (4cm)}
\def\fourthcircle{(0,0) circle (4cm)}

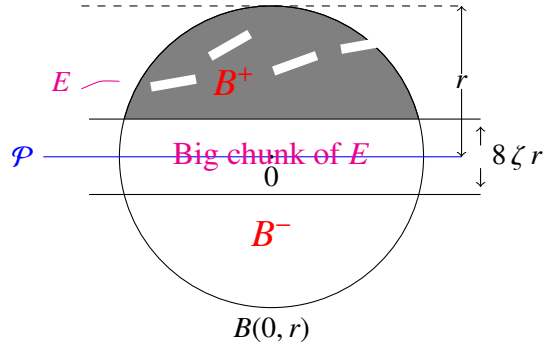
\begin{figure}[!ht]
\centering
\begin{tikzpicture}[scale=.5]
\draw[name path=C1, fill=gray] \firstcircle;
\fill[white] (-6,-5) rectangle (6,1);
\draw \firstcircle;
\node[below] at (0,-4) {$B(0,r)$};
\draw[name path=L1] (-4.8,1)--(5.5,1);
\draw[name path=L2] (-4.8,-1)--(5.5,-1);
\path [name intersections={of=C1 and L1, total=\n}];
\draw[blue] (-6,0)--(5,0);
\draw [magenta] plot [smooth] coordinates {(-3,1.8) (-2.5,2) (-2,2)};
\fill[white, rotate=10] (-2.8, 2.25) rectangle (-1.6,2.5);
\draw [magenta, shift={(2 cm, 2 cm)},rotate=20] plot [smooth] coordinates {(-3,1.8) (-2.5,2) (-2,2)};
\fill[white, shift={(2 cm, 2 cm)}, rotate=30] (-2.8, 2.25) rectangle (-1.6,2.5);
\draw [magenta, shift={(3.5 cm, 1 cm)},rotate=10] plot [smooth] coordinates {(-3,1.8) (-2.5,2) (-2,2)};
\fill[white, shift={(3.5 cm, 1 cm)}, rotate=20] (-2.8, 2.25) rectangle (-1.6,2.5);
\draw [magenta, shift={(5 cm, 1 cm)}] plot [smooth] coordinates {(-3,1.8) (-2.5,2) (-2,2)};
\fill[white, shift={(5 cm, 1 cm)}, rotate=10] (-2.8, 2.25) rectangle (-1.6,2.5);
\draw [magenta, shift={(-2 cm, 0 cm)}] plot [smooth] coordinates {(-3,1.8) (-2.5,2) (-2,2)};
\node[left,magenta] at (-5,2) {$E$};
\node[left,blue] at (-6,0) {$\mc{P}$};
\draw[->] (5.5,.3) -- (5.5,.8);
\draw[->] (5.5,-.3) -- (5.5,-.8);
\node at (5.5,0) {$\qquad\ \ 8\,\zeta\, r $};
\node[red] at (-1,2) {{\Large $B^{+}$}};
\draw[dashed] (-5,4)--(5,4);
\draw[->] (5,2.2)--(5,4);
\draw[->] (5,1.8)--(5,0);
\node at (5,2){$r$};
\filldraw (0,0) circle (1pt);
\node[below] at (0,0){$0$};
\node[red] at (0,-2) {{\Large $B^{-}$}};
\node[magenta] at (0,0){\large Big chunk of $E$};
\end{tikzpicture}
\caption{Sets $B^{+}$, $B^{-}$, $\Sigma$.}
\label{setBpBmSfigure}
\end{figure}

\begin{lemma}\label{lemma:rect-two-points}
We can take $N\ge 1$ large enough and $0<\eta\ll 1/4$ small enough depending only on the ADR constants and dimension so that if $B:=B(0,r)$ for some $0<r<\eta\,r_0$, then there exist $X^{\pm}\in B^{\pm}\setminus \Sigma $. Hence $B(X^{\pm}, 2\,\zeta\,r/N)\subset B(0,2\,r)\setminus E$, where $\zeta=\eta^{\frac{n}{n+1}}$.
\end{lemma}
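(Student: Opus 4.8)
The plan is to argue by contradiction from an elementary volume comparison. First I would observe that the linear-approximability conditions (i)--(ii) play no role in \emph{this} step: the plane $\mc{P}$ serves only to fix the reference direction $e_{n+1}$, and all the content comes from the Ahlfors--David regularity of $E$. (Conditions (i)--(ii), and the precise exponent in $\zeta=\eta^{n/(n+1)}$, are what will matter afterwards, when one promotes one of $X^{+},X^{-}$ to a genuine \emph{exterior} corkscrew point.)

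Write $\rho:=4\,\zeta\,r/N$. Since $\zeta=\eta^{n/(n+1)}\to 0$ as $\eta\to 0$, a dimensional smallness on $\eta$ guarantees $\zeta\le 1/8$, and then $B^{+}$ is a nondegenerate cap containing $B\big((0,\dots,0,\tfrac34 r),\,r/8\big)$, so $|B^{+}|\ge c_{n}\,r^{n+1}$ for a dimensional constant $c_{n}$, and similarly for $B^{-}$. I would actually prove the slightly stronger statement that $B^{\pm}$ is not contained in the open set $\mc{U}:=\{X\in\ree:\dist(X,E)<\rho\}$. Note $\mc{U}=\Sigma\cup E$, so any witness $X^{\pm}\in B^{\pm}\setminus\mc{U}$ lies in $B^{\pm}\setminus\Sigma$ and satisfies $\dist(X^{\pm},E)\ge\rho=4\zeta r/N$; since $2\zeta/N<1$, this forces $B(X^{\pm},2\zeta r/N)\subset B(0,2r)\setminus E$, which is the ``Hence'' clause.

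For the contradiction, suppose $B^{+}\subset\mc{U}$. Because $B^{+}\subset B(0,r)$ and $\rho<r$, every point of $B^{+}$ lies within $\rho$ of $E\cap B(0,2r)$, so $B^{+}$ is contained in the $\rho$-neighborhood of $E\cap B(0,2r)$. Here I would run the usual covering argument: take a maximal $\rho$-separated subset $\{z_{j}\}$ of $E\cap B(0,2r)$, so the balls $B(z_{j},\rho)$ cover $E\cap B(0,2r)$ while the balls $B(z_{j},\rho/2)$ are pairwise disjoint; the lower ADR bound gives $\rh^{n}(E\cap B(z_{j},\rho/2))\gtrsim\rho^{n}$, the upper ADR bound gives $\rh^{n}(E\cap B(0,3r))\lesssim r^{n}$, and since the disjoint balls $B(z_{j},\rho/2)$ have centers in $B(0,2r)$ they all lie in $B(0,3r)$, whence their number is $\lesssim(r/\rho)^{n}$. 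Enlarging to the balls $B(z_{j},2\rho)$ covers the $\rho$-neighborhood, so
\[
\big|\{X:\dist(X,E\cap B(0,2r))<\rho\}\big|\;\lesssim\;\Big(\frac{r}{\rho}\Big)^{n}\rho^{\,n+1}\;=\;r^{n}\rho\;=\;\frac{4\zeta}{N}\,r^{n+1}\;\lesssim\;\frac1N\,r^{n+1},
\]
with implicit constants depending only on $n$ and the ADR constants. Combined with $|B^{+}|\ge c_{n}r^{n+1}$ this forces $N\le C$ for some $C$ depending only on $n$ and the ADR constants --- a contradiction once $N$ is chosen larger. Hence $B^{+}\not\subset\mc{U}$, and $B^{-}\not\subset\mc{U}$ follows by the reflection $X_{n+1}\mapsto-X_{n+1}$, which proves the lemma.

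The only step I expect to require care is this volume bound for the $\rho$-neighborhood: it must be uniform in the scale $r$ and independent of $\eta$, and that uniformity is precisely what the two-sided ADR hypothesis on $E$ delivers (condition (ii) by itself controls only a subset of $E$, which is not enough to bound a tubular-neighborhood volume). Everything else is bookkeeping --- chiefly keeping the radii $4\zeta r/N$ and $2\zeta r/N$ straight and checking the final containment in $B(0,2r)$.
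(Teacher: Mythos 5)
Your proof is correct, and it takes a genuinely different and more elementary route than the paper's. The paper proves the stronger statement $|B^{+}\setminus\Sigma|\gtrsim r^{n+1}$ by running a Whitney decomposition of $\ree\setminus E$, projecting each small Whitney cube $I$ meeting $B^{+}\cap\Sigma$ to a nearby boundary cube $Q_I$, checking (for $N$ large) that these $Q_I$ sit above the plane $\P$ at height $\gtrsim\zeta r$, and then invoking condition (ii) of linear approximability to bound $\sum_{I\in\W^k}\ell(I)^n$ by $\eta^{1-n}\zeta^n r^n$; summing in $k$ gives $|B^+\cap\Sigma|\lesssim\eta^{1-n}\zeta^{n+1}r^{n+1}=\eta\, r^{n+1}$, which is where the exponent $\zeta=\eta^{n/(n+1)}$ earns its keep. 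Your argument instead bounds the whole tubular $\rho$-neighborhood of $E\cap B(0,2r)$, with $\rho=4\zeta r/N$, by $\lesssim r^n\rho\approx(\zeta/N)\,r^{n+1}$ using only the two-sided ADR bound and a maximal-separated covering, and then beats it against $|B^{\pm}|\gtrsim r^{n+1}$ by taking $N$ large. Both suffice for nonemptiness of $B^{\pm}\setminus\Sigma$. You are also right that condition (ii) plays no role at this stage; in fact only condition (i) is invoked later, in the proof of Proposition \ref{corkomega}, to find $z\in\pom$ near the intersection point $Z\in\P\cap\gamma$. What the paper's longer argument buys is the quantitatively sharper conclusion $|B^+\setminus\Sigma|\gtrsim r^{n+1}$ with a constant independent of $N$, but that extra strength is not used. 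Your formulation via $\mc{U}=\Sigma\cup E$ also handles cleanly the (measure-zero, since $E$ is ADR) possibility that a point of $B^{\pm}$ lies on $E$, which the paper implicitly disposes of via $|E|=0$. Two small bookkeeping checks you already flagged and which do go through: $\zeta\le 1/8$ ensures $B^\pm$ contains a ball of radius $\approx r$, and $2\zeta/N<1$ ensures the final containment $B(X^\pm,2\zeta r/N)\subset B(0,2r)$.
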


\begin{proof}
We work with $B^+$ (the proof for $B^-$ is identical). Our aim is to show that
\begin{align}
\label{corkscrowexists}
\left|B^{+}\setminus \Sigma \right|\gtrsim r^{n+1}.
\end{align}
It then easily follows that $B^{+}\setminus\Sigma \neq \emptyset$ and we can pick any $X^+\in B^{+}\setminus\Sigma$.

To show \eqref{corkscrowexists}, we let $\W=\mc{W}(\ree\setminus E)$ be the Whitney decomposition of $\ree\setminus E$ and set $\W^{k}=\{I\in \W :\, \,\ell(I)=2^{-k}\}$.
Let $I\in \W$ be such that $I\cap B_+\cap \Sigma  \neq\emptyset$ and pick $Y=Y_I=(Y_{1},\ldots,Y_{n+1})\in I\cap B_+\cap \Sigma $.
Then $\ell(I)\approx \dist(I,E)\le \dist(Y,E)<4\,\zeta\,r/N$ and hence $I\in \W^k$ for some $k$ verifying $2^{-k}\lesssim \zeta\,r/N$.

From the definition of the dyadic grid we can associate to each $I\in \W $ a nearest dyadic cube $Q_{I}\in \mathbb{D}(E)$ such that
\[
\ell(Q_{I})=\ell(I)\qquad\mbox{and}\qquad \dist(I, Q_{I})=\dist(I,E)\approx\ell(I).
\]
(Just pick one if there are several choices available.) For every $y\in Q_{I}$, we have $|y-Y|\approx\dist(I,E)\approx\ell(I)\lesssim \zeta\, r/N$. In particular, we have $|y_{n+1}-Y_{n+1}|\lesssim \zeta r/N$.
Since $Y\in B^+$ (i.e., $Y_{n+1}>4\,\zeta\,r$), taking $N$ large enough depending on the ADR constants and dimension, we conclude that $y_{n+1}>2\,\zeta\,r$. Thus, choosing  $\eta$ (and hence $\zeta$) small enough depending on the ADR constants and dimension, we obtain
$$
Q_{I}
\subset
E\cap B(0,2\,r)\setminus\P(2\,\zeta\,r)
\subset
E\cap B(0,2\,\zeta\,\eta^{-1}\,r)\setminus\P(\eta\,(2\,\zeta\,\eta^{-1}\,r)).
$$
Note that with $k\in \Z$ fixed, the family $\{Q_I\}_{I\in \W^k}$ has bounded overlap (with overlap independent of $k$). Therefore,
\begin{multline*}
\sum\limits_{I\in \W^k} \ell(I)^n
\approx
\sum\limits_{I\in \W^k} \sigma(Q_{I})
\lesssim
\sigma\left(\bigcup_{I\in \W^k}Q_{I}\right)
\\
\leq
\sigma\big(E\cap B(0,2\,\zeta\,\eta^{-1}\,r)\setminus\P(\eta\,(2\,\zeta\,\eta^{-1}\,r))\big)
<
\eta\,\big(2\,\zeta\,\eta^{-1}\,r\Big)^n
=
2^n\,\eta^{1-n}\,\zeta^{n}\,r^n,
\end{multline*}
where in the last estimate we have used $(ii)$ in Definition \ref{approximable} along the fact that $0<r<\eta\,r_0$ and that $\eta$ is small enough (depending on $n$). We conclude that
\begin{multline}
\label{IsumB1}
|B^{+}\cap\Sigma |
=
\sum_{I\in \W}|I\cap B_+\cap \Sigma |
\le
\sum\limits_{k:\, 2^{-k}\lesssim \frac{\zeta\,r}{N}}\, \,  \sum\limits_{I\in\W^k} \ell(I)^{n+1}
\\
\le
\sum\limits_{k:\, 2^{-k}\lesssim \frac{\zeta\,r}{N}}\, 2^{-k}\,  \sum\limits_{I\in\W^k} \ell(I)^{n}
\lesssim
\eta^{1-n}\,\zeta^{n+1}\,r^{n+1}
=\eta\,r^{n+1}
.
\end{multline}
This and \cite[Lemma 5.3]{HM} easily imply
\begin{multline*}
r^{n+1}
\approx
\big|B\cap\{X\in\ree:0<X_{n+1}\le 4\,\zeta\,r\}\big|
+
|B^+\cap\Sigma |
+
|B^+\setminus\Sigma |
\\
\lesssim
(\zeta+\eta)\,r^{n+1}+|B^+\setminus\Sigma |.
\end{multline*}
Taking now $\eta>0$ small enough  depending only on ADR constants and dimension, we can hide the first time in the last term and conclude as desired \eqref{corkscrowexists}.
\end{proof}

We are now ready to establish the main result of this section:

\begin{proposition}
\label{corkomega}
Let $\Omega$ be a 1-sided NTA domain with ADR boundary and assume that $\pom$ is $n$-rectifiable. There exists $0<c<1$ depending on the 1-sided NTA and ADR constants  such that for $\sigma$-a.e.~$x\in\pom$ there is a scale $r_{x}>0$ such that for all $0<r<r_x$ there exist $X_{\Delta(x,r)}^{\rm int}, X_{\Delta(x,r)}^{\rm ext} \in B(x,r)$ that are respectively interior and exterior corkscrew points relative to $\Delta(x,r)$ with implicit constant $c$.
\end{proposition}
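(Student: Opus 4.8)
The plan is to obtain the interior corkscrew points essentially for free and to put all the work into the exterior ones. For every surface ball $\Delta(x,r)$ with $0<r<\diam(\pom)$ the Corkscrew condition built into the definition of $1$-sided NTA already provides an interior corkscrew point $X^{\rm int}\in B(x,r)$ with $B(X^{\rm int},c_1r)\subset B(x,r)\cap\Omega$, where $c_1$ is the Corkscrew constant; so the content of the Proposition is the existence of exterior corkscrew points. For those I would argue as follows. Since $\pom$ is $n$-rectifiable and, being ADR, has locally finite $\rh^n$ measure, the local version of Lemma \ref{Ma15.11} shows $\pom$ is $n$-linearly approximable, so conditions $(i)$--$(ii)$ of Definition \ref{approximable} hold at $\sigma$-a.e.\ point of $\pom$. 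Fix such a point $a$, normalize $a=0$ and $\mc{P}=\{x_{n+1}=0\}$ as in Lemma \ref{lemma:rect-two-points}, and record the associated $r_0,\lambda>0$. Choose $\eta\in(0,1/4)$ small (to be fixed below, depending only on the dimension and the $1$-sided NTA and ADR constants), set $\zeta=\eta^{n/(n+1)}$, and take $N$ as in Lemma \ref{lemma:rect-two-points}. For $0<r<r_x$, with $r_x$ a suitably small fixed multiple of $r_0$, apply Lemma \ref{lemma:rect-two-points} at the scale $r/2$: it produces points $X^{+},X^{-}$, lying respectively above and below the slab $\mc{P}(2\zeta r)$ and inside $B(0,r/2)$, with $B(X^{\pm},\zeta r/N)\subset B(0,r)\setminus\pom$. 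Since $\ree\setminus\pom=\Omega\sqcup\Omega_{\rm ext}$ is a disjoint union of two open sets and balls are connected, each of $B(X^{+},\zeta r/N)$ and $B(X^{-},\zeta r/N)$ lies entirely in $\Omega$ or entirely in $\Omega_{\rm ext}$. If at least one of them lies in $\Omega_{\rm ext}$, call it $B(X^{\rm ext},\zeta r/N)$, then $X^{\rm ext}\in B(0,r)$ is an exterior corkscrew point relative to $\Delta(0,r)$ with implicit constant $\zeta/N$, and, together with $X^{\rm int}$, taking $c:=\min\{c_1,\zeta/N\}$ finishes the proof.

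The heart of the matter --- and the step I expect to be the main obstacle --- is showing that $B(X^{+},\zeta r/N)$ and $B(X^{-},\zeta r/N)$ cannot both lie in $\Omega$, and this is exactly where the $1$-sided NTA (i.e.\ uniform domain) hypothesis enters decisively. Suppose both lay in $\Omega$. Then, $\Omega$ being a uniform domain, there is a curve $\gamma\subset\Omega$ joining $X^{+}$ to $X^{-}$ of length at most $C_u|X^{+}-X^{-}|$ and with $\dist(z,\pom)\ge C_u^{-1}\min\{|z-X^{+}|,|z-X^{-}|\}$ for every $z\in\gamma$, where $C_u$ depends only on the $1$-sided NTA constants of $\Omega$. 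The crucial quantitative point, and the thing one must be careful about so as to avoid a circular choice of constants, is that the length of $\gamma$ is at most $C_u|X^{+}-X^{-}|\le C_ur$, bounded in terms of $C_u$ and $r$ \emph{only} --- not via the Harnack-chain count, which would introduce a dependence on $\zeta$ and hence on $\eta$; consequently $\gamma\subset B(0,(1+C_u)r)=:B(0,C_3r)$. Now $\gamma$ is connected and its endpoints lie on opposite sides of $\mc{P}$, so $\gamma$ meets $\mc{P}$ at some point $q\in B(0,C_3r)$, and since $|q-X^{\pm}|\ge|X^{\pm}_{n+1}|>2\zeta r$ the cigar estimate gives $\dist(q,\pom)\ge 2\zeta r/C_u$. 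On the other hand, $(i)$ of Definition \ref{approximable} applied at the scale $C_3r<r_0$ to $q\in\mc{P}\cap B(0,C_3r)$ produces a point of $\pom$ within $\eta C_3 r$ of $q$, i.e.\ $\dist(q,\pom)<\eta C_3 r$. Comparing these two estimates and recalling $\zeta=\eta^{n/(n+1)}$ gives $\eta^{1/(n+1)}>2/(C_uC_3)$, which is impossible once $\eta$ has been chosen small enough in terms of $C_u$ alone. Hence not both balls lie in $\Omega$, so one lies in $\Omega_{\rm ext}$, as needed.

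It remains to dispatch some routine bookkeeping. One checks that at the scale $r/2$ the ball $B(X^{\pm},\zeta r/N)$ does sit inside $B(0,r)$ --- this is immediate from Lemma \ref{lemma:rect-two-points} since there $B(X^{\pm},2\zeta s/N)\subset B(0,2s)$ with $s=r/2$ --- and that the scales $r/2$ and $C_3 r$ used above stay below $r_0$; all of this is arranged by taking $r_x$ a sufficiently small fixed multiple of $r_0$, which may in addition be assumed $\le\diam(\pom)$ so that the interior Corkscrew condition applies. Note that the ball landing in $\Omega_{\rm ext}$ may be $B(X^{+},\cdot)$ at some scales and $B(X^{-},\cdot)$ at others, which is harmless since we only claim existence of one exterior corkscrew point at each scale. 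Finally, tracking $\zeta=\eta^{n/(n+1)}$, $N$, $C_u$ and $c_1$ through the argument shows the resulting constant $c$ depends only on the dimension and the $1$-sided NTA and ADR constants, and the construction works for every $a$ in the full-measure set (with respect to $\sigma$) on which linear approximability holds, which is precisely the conclusion of the Proposition.
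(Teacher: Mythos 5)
Your proof is correct and follows essentially the same route as the paper: pass from rectifiability to linear approximability via Lemma \ref{Ma15.11}, invoke Lemma \ref{lemma:rect-two-points} to produce candidate points $X^{\pm}$ on opposite sides of the approximating plane, and rule out the possibility that both lie in $\Omega$ by running a cigar curve from the uniform-domain property across $\mc{P}$ and comparing the resulting lower bound $\dist(q,\pom)\gtrsim\zeta r=\eta^{n/(n+1)}r$ against the upper bound $\dist(q,\pom)\lesssim\eta r$ from condition $(i)$ of Definition \ref{approximable}. The only cosmetic difference is that you apply Lemma \ref{lemma:rect-two-points} at scale $r/2$ so the exterior ball lands inside $B(0,r)$ and you get a corkscrew relative to $\Delta(0,r)$ directly, whereas the paper applies it at scale $r$ and reads off a corkscrew relative to $\Delta(0,2r)$ (harmless after relabeling $r_x$); your explicit remark that the length bound $\ell(\gamma)\le C_u|X^+-X^-|$ from the uniform-domain condition must be used instead of a Harnack-chain count (to avoid a circular dependence on $\eta$) is exactly the point the paper's choice of constants is exploiting.
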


\begin{proof}
We can use Lemma \ref{Ma15.11} to find a subset of $E=\pom$ with full $\sigma$-measure on which  $(i)$ and $(ii)$ of Definition \ref{approximable} hold. We can make the previous reductions and find $X^{\pm}$ as in Lemma \ref{lemma:rect-two-points}  associated with $B:=B(0,r)$ whenever $0<r<\eta\,r_0$, where $N\ge 1$ is a fixed small number and $\eta$ is small enough but at our disposal.

We claim that if $\eta$ is small enough depending on the 1-sided NTA and ADR constants, then at least one of $X^{\pm}$ belongs to $\Omega_{\rm ext}$. Suppose otherwise that $X^{\pm}\in \Omega$ (by construction $X_\pm\notin\pom$). Since $\Omega$ is a 1-sided NTA domain, $\Omega$ is also a uniform domain. In fact, the two notions are equivalent; see \cite{AHMNT} for the definition of a uniform domain and for a proof of the direction that is relevant here. Thus, there exist $0<c_1<1$ and $C_1>1$ depending only the 1-sided NTA constants and a path $\gamma$ connecting $X^+$ and $X^{-}$ in $\Omega$ so that
$$
\ell(\gamma)\le C_1\,|X^- - X^+|
\quad\mbox{and}\quad
\dist(Z,\pom)\ge c_1\,\dist(Z, \{X^-, X^+\}) \quad \forall\,Z\in\gamma.
$$ In the previous expression, $\ell(\gamma)$ denotes the length of $\gamma$. For every $Y\in\gamma$, we have
$$
|Y|
\le
|Y-X^+|+ |X^+|
\le
\diam(\gamma)+r
\le
\ell(\gamma)+r
\le C_1\,|X^- - X^+|+r
\le (2\,C_1+1)\,r,
$$
since $X^+\in\gamma$. Hence $\gamma\subset B(0,(2\,C_1+1)\,r)$. On the other hand $X^{\pm}\in B^{\pm}$. Hence $X^+$ lies above $\P$ and $X^{-}$ lies below $\P$. In particular, we can thus find $Z\in\P\cap\gamma\subset \P\cap B(0,(2\,C_1+1)\,r)$. If we assume that $(2\,C_1+1)\,r<r_0$, then we can apply $(i)$ in Definition \ref{approximable} to find $z\in\pom$ such that $\dist(Z,\pom)\le |Z-z|<\eta\,(2\,C_1+1)\,r$. Also note that since $Z\in \P$ and $X^{\pm}\in B^{\pm}$,
$$
|Z-X^{\pm}|
\ge
|X_{n+1}^{\pm}|
>4\,\zeta\,r
=
4\,\eta^{\frac{n}{n+1}}\,r.
$$
Hence
$$
\eta\,(2\,C_1+1)\,r
>
\dist(Z,\pom)
\ge
c_1\,\dist(Z, \{X^-, X^+\})
\ge
c_1\,4\,\eta^{\frac{n}{n+1}}\,r.
$$
We can clearly take $\eta$ arbitrarily small depending only on $C_1$, $c_1$ and $n$ so that the previous estimate does not hold and this brings us to a contradiction.

Let us summarize the argument so far. We can pick  $\eta_0$ small enough (depending on the 1-sided NTA and ADR constants) so that if $$0<r<r_x:=r_0\,\min\{\eta_0, (2\,C_1+1)^{-1}\},$$ then $X^{+}$ or $X^{-}$ is in $\Omega_{\rm ext}$. Let $X^{\rm ext}$ denote one of the points in $\Omega_{\rm ext}$. By Lemma \ref{lemma:rect-two-points},
$B(X^{\rm ext}, 2\,\zeta_0\,r/N))\subset B(0,2\,r)\setminus\pom$ and hence $X^{\rm ext}$ is an exterior corkscrew point relative to $\Delta(0,2\,r)$ with implicit constant $\zeta_0/N$. On the other hand since $\Omega$ is a 1-sided NTA domain it satisfies the (interior) corkscrew condition and hence we can find $X^{\rm int}$ an interior corkscrew point relative to $\Delta(0,2\,r)$ with implicit constant $c_0$. This readily leads to the desired conclusion with $r_x$ as above and $c=\min\{\zeta_0/N,c_0\}$. This completes the proof. 
\end{proof}

\subsection{Proof of \eqref{mainb} implies \eqref{maind}}
\label{bimpliesd}
In this section, we prove \eqref{mainb} implies \eqref{maind}. Suppose there exist a Borel measurable set $\mathbb{F}_0\subset\pom$ with $\sigma(\mathbb{F}_0)=0$ and  constant $0<2\,c_0<1$ such that:
\begin{quotation}
For each $x\in\pom\setminus \mathbb{F}_0$, there is a scale $0<r_x<\diam(\pom)$  such that for every $0<r<r_x$ there exist interior and exterior corkscrew points relative to $\Delta(x,r)$ with implicit constant $2\,c_0$.\end{quotation}
By taking $r_x$ smaller if needed, we may assume that $r_x=2^{-k_x}$ for some $k_x\in\Z$. Given $k\in\Z$ we consider the closed set (and therefore measurable set)
$$
E_{k}:=\overline{\{x\in \pom\setminus\mathbb{F}_0:\, \, r_{x}= 2^{-k}\}}.
$$
Then $ \partial\Omega= \mathbb{F}_0\cup \bigcup_{k\in\Z} E_k$.
In turn, for each $k$ we can write  $E_k =\bigcup_{Q\in\dd_{k}} E_k\cap Q$. To establish \eqref{maind}, it suffices to show that for every $k\in\Z$ and $Q\in\dd_{k}$, there
exists a bounded-chord arc domain $\Omega_\star\subset\Omega$ such that $E_k\cap Q\subset \pom\cap\partial\Omega_\star$.

Fix $k\in\Z$ and $Q_0\in\dd_k$ for which $E_k\cap Q_0\neq\emptyset$. Suppose $x\in E_k$ and $0<r<2^{-k}$. Then there exists $y\in \pom\setminus\mathbb{F}_0$ such that $r_y=2^{-k}$ and $|x-y|<r/2$. Let $X^{\pm}$ be interior/exterior corkscrew points relative to $\Delta(y,r/2)$ with implicit constant $2c_0$ and note that
$$
B(X^{\pm}, c_0\,r)
=
B(X^{\pm}, 2\,c_0\,(r/2))
\subset B(y,r/2)\cap\Omega^{\pm}\subset B(x,r)\cap\Omega^{\pm},
$$
where $\Omega^+$ and $\Omega^-$ denote $\Omega$ and $\Omega_{\rm ext}$, respectively. We conclude that for every $x\in E_k$ and for every $0<r<2^{-k}$ there are interior/exterior corkscrew points $X^\pm$  relative to $\Delta(x,r)$ with implicit constant $c_0$. This is the key property for the rest of the argument in this section. To continue, set $F_k=E_k\cap Q_0$ and dyadically subdivide $Q_{0}$, stopping whenever $Q\cap F_k=\emptyset$. If we never stop, set $\F=\emptyset$. Otherwise, $\F=\{Q_{j}\}_{j\geq 1}\subset \dd_{Q_0}\setminus Q_0$ is the pairwise collection of stopping time dyadic cubes and it follows that $Q_j\cap F_k=\emptyset$ and $Q\cap F_k\neq\emptyset$ whenever $Q_j\subsetneq Q\subset Q_0$. We remark that we did not stop at $Q_0$, because $F_k\neq\emptyset$. Also, $F_k=Q_0\setminus \bigcup_{j\geq 1} Q_j$, because $E_k$ is a closed set.

Set $\Omega_\star=\Omega_{\F,Q_0}$ (where it is understood that $\Omega_\star=T_{Q_0}$ if $\F=\emptyset$). Then $\Omega_\star$ is a bounded 1-sided NTA domain with ADR boundary by \cite[Lemma 3.61]{HM}, where all implicit constants for $\Omega_\star$ depend only on the corresponding constants for $\Omega$.
By \eqref{prop61}, we only need to check that $\Omega_\star$ satisfies the exterior corkscrew condition, which will follow from the definition of the set $F_k$.

To complete the proof, let $M>1$ denote a large constant to be chosen below. Fix any boundary point $x\in\pom_\star$ and any scale $0<r<2^{-k}=\ell(Q_0)\approx\diam(\pom_\star)$, and set $\Delta_\star=B(x,r)\cap\pom_\star$. We consider two cases:

\noindent{\bf Case 1:} Suppose that $0\le \delta(x)\le  r/M$, where $\delta(x)=\dist(x,\partial\Omega)$. We first claim that there exists $Q\in\dd_{Q_0}$ with $\ell(Q)\approx r/M$ such that $|x-x_Q|\lesssim r/M$. To see this, note that on the one hand, if $x\in\partial\Omega_{\star}\cap \partial\Omega$, then $x\in \overline{Q_0}$ (see \eqref{prop61}) and we can find $Q\in \dd_{Q_0}$ with $\ell(Q)\approx r/M$ and $x\in\overline{Q}$. On the other hand, if $x\in\partial\Omega_\star\cap\Omega$, then by the definition of the sawtooth region, $x\in\partial I^*$ for some $I\in\W_{Q'}^*$ with $Q'\in\dd_{\F,Q_0}$ and $|x-x_{Q'}|\approx\dist(I,Q')\approx\ell(Q') \approx \ell(I)\approx\delta(x)\le r/M$. Let us now take $Q\in\dd_{Q_0}$, an ancestor of $Q'$, such that $\ell(Q)\approx r/M$ and hence $|x-x_Q|\le |x-x_{Q'}|+|x_{Q'}-x_Q|\lesssim r/M$. This verifies the claim.

Take $Q$ as in the claim and consider two cases. Suppose first that there exists $y\in Q\cap F_k\neq\emptyset$. Then,  as shown above, there is  $X^{-}$ such that $B(X^{-},c_0\,r/2)\subset B(y,r/2)\cap\Omega_{\rm ext}$. Therefore, for $M$ large enough, we have
$$
B(X^{-},c_0\,r/2)
\subset
B(y,r/2)\cap\Omega_{\rm ext}
\subset
B(x,r)\cap(\Omega_\star)_{\rm ext}.
$$
Suppose otherwise that $Q\cap F_k=\emptyset$. Then $Q\subset \bigcup_{j\geq 1} Q_j$, say $Q\cap Q_{i}\neq\emptyset $ for some $Q_{i}\in\F$.
Recall that $\widehat{Q}_{i}$, the father of $Q_{i}$, meets $F_k$ and therefore $Q\subset Q_{i}$. Thus, by \cite[Lemma 5.9]{HM}, there is a ball $B'\subset\ree\setminus \Omega_\star$ whose center is $x_{Q}$ and whose radius is of the order of $\ell(Q)\approx r/M$. For $M$ large enough, this gives the desired exterior corkscrew condition relative to $\Delta_\star$. This completes Case 1.

\noindent{\bf Case 2:} Suppose that $\delta(x)> r/M$. Then $x\in\Omega$, and by definition of the sawtooth region, $x\in\partial I^*\cap J$ for some Whitney cubes $I\in\W_Q^*$, $Q\in \dd_{\F,Q_0}$,  and $J\in\W$ with $\tau\,J\subset\Omega\setminus\Omega_\star$ for some $\tau\in (1/2,1)$. Note that $\ell(I)\approx\ell(J)\approx\delta(x)>r/M$. Hence we can easily find an exterior corkscrew in the segment joining $x$ with the center of $J$ with corkscrew constant that depends only on $M$ and the implicit constants in the previous estimates. This completes Case 2.

This finishes the proof of \eqref{mainb} implies \eqref{maind}.

\subsection{Proof of \eqref{maind} implies (\ref{mainc})}
\label{dimpliesc}

The argument is quite simple. Let $F\subset\pom$ be a Borel set and suppose that $\omega(F)=0$. Then $\omega(F\cap F_N)=0$ for every $N$ and it suffices to show that $\sigma(F\cap F_N)=0$ for each $N$. Fix $N$ and write $\omega_N$ for the harmonic measure of $\Omega_N$ with pole at $X_N$, any point of $\Omega_N$. By Harnack's inequality
and the maximum principle (see the justification below), $0\le \omega_N(F\cap F_N)\le \omega^{X_N}(F\cap F_N)=0$. But $\omega_N\in A_\infty(\pom_N)$, since $\Omega_N$ is a chord-arc domain (see \cite{DJ, Sem}). Hence $\omega_N$ and $\rh^n\big|_{\pom_N}$ are mutually absolutely continuous. Therefore, $\sigma(F\cap F_N)=0$, because $\omega_N(F\cap F_N)=0$.

Let us justify the use of maximum principle. One can use Perron's method (see \cite[Chapter 2]{GT} for more details) to easily see that every superfunction relative to $\chi_{F\cap F_N}$ for $\Omega$ is also a
superfunction relative to $\chi_{F\cap F_N}$ for $\Omega_N$,  since $\Omega_N\subset\Omega$ and $F\cap F_N\subset \pom\cap\pom_N$. Hence the desired inequality follows after taking the infimum over such superfunctions. This works for the Laplacian and does not require Wiener regularity. However, since below we are also interested in the case of variable coefficients, we now present a more robust, alternative argument, borrowed from \cite{HMT-general}.  Fix a compact set $\widetilde{F}\subset F\cap F_N$  and a small error $\epsilon>0$.  Since $\omega^{X_N}$ is outer regular, there exists a bounded, relatively open set $U\subset\pom$ such that $\widetilde{F}\subset U$ and
$$\omega^{X_N}(U)\le \omega^{X_N}(\widetilde{F})+\epsilon.
$$
By Urysohn's lemma there exists $\varphi\in C_c(\pom)$ such that $0\le \varphi\le 1$, $\varphi\equiv 1$ on $\widetilde{F}$ and $\varphi\equiv 0$ on $\pom\setminus U$. Let $u$ denote the Poisson extension of $\varphi$, i.e.,
$$
u(X)
:=
\int_{\pom}\varphi(y)\, d\omega^{X}(y),
\qquad \forall\,X\in\Omega.
$$
Because $\partial\Omega$ is ADR, every $x\in\partial\Omega$ is regular in the sense of Wiener. Indeed, by the dual characterization of capacity using Wolff's potential, see \cite[Theorems  2.2.7 and 4.5.2] {AH} and \cite[Theorem 2.38 and Example 2.12]{HKM},  one can see that ADR yields that $\Omega$ satisfies the ``capacity density condition'' (see \cite{Aikawa} for the precise defintion) which, in turn, is a stronger quantitative version of Wiener regularity (details can be found in the forthcoming papers \cite{HLMN, HMT-general}). Hence $u\in C(\overline{\Omega})$, where $u|_{\partial\Omega}=\varphi$, and thus, $u\in C(\overline{\Omega_{N}})$, as well. It follows that
\begin{equation}\label{e:4-1}
\omega_{N}(\widetilde{F})
=
\int_{\partial \Omega_{N}} 1_{\widetilde{F}}(y)\, d\omega_{N}(y)
\le
\int_{\partial \Omega_{N}} u(y)\, d\omega_{N}(y)
=
u(X_N),\end{equation} where the last equality holds
by the strong maximum principle and the fact that $\Omega_N$ is bounded. On the other hand,
\begin{equation}\label{e:4-2}
u({X_N})
\le \int_{\partial\Omega} \chi_U(y)\,d\omega^{X_N}(y)=
\omega^{X_N}(U)
\le
\omega^{X_N}(\widetilde{F})+\epsilon.\end{equation} Combining \eqref{e:4-1} and \eqref{e:4-2} and letting $\epsilon\rightarrow 0$, we conclude that $\omega_N(\widetilde F) \leq \omega^{X_N}(\widetilde F)$ for every compact set $\widetilde F\subset F\cap F_N$ Therefore, since $\omega_N$ and $\omega^{X_N}$ are inner regular,
$\omega_N(F\cap F_N) \leq \hm^{X_N}(F\cap F_N)$, as claimed above.

\subsection{Proof of \eqref{mainc} implies \eqref{maind}}
\label{cimpliesd}
In this section we prove \eqref{mainc} implies \eqref{maind}. Assume that $\sigma\ll \omega$. Fix $Q_{0}\in\dd_{k_0}$ where $k_0\in\Z$ is taken so that  $2^{-k_0}\ll\diam(\pom)$. From the construction of $T_{Q_0}$ one can easily see that $T_{Q_0}\subset \kappa_0\,B_Q:=B_{Q_0}^*$,
where $\kappa_0$ is a constant depending on the ADR and 1-sided NTA constants and the parameters in \eqref{eq2.whitney2} (see \cite{HM}).
Let $X_{0}$ be an interior corkscrew point for $\kappa\,\Delta_{Q_0}^*$ where $\kappa$ is a large,  but fixed constant, for which $X_0\notin 4\, B_{Q_0}^*$. Note that implicitly, we need $\ell(Q_0)\ll \diam(\pom)$. Since $\partial\Omega$ is ADR, Bourgain's alternative \cite{B87} implies that there exist $0<c<1$ and $C>1$ depending only on $n$ and ADR such that for every $x\in \pom$ and $0<r<\diam(\pom)$ one has that $\omega^Y(\Delta(x,r))\ge C^{-1}$ for every $Y\in \Omega\cap B(x,c\,r)$. This and Harnack's inequality gives $\omega^{X_{0}}(Q_0)\ge C_0^{-1}$, where $C_0\ge 1$ depends on ADR constants and $\kappa$. Thus,  $\omega:=C_0\,\sigma(Q_{0})\,\omega^{X_{0}}$ satisfies
\begin{align}
\label{normalizedomega*}
1\leq\frac{\omega(Q_{0})}{\sigma(Q_{0})}\leq C_0.
\end{align}
Let $N\ge C_0$ and let $\F_N=\{Q_j\}\subset \dd_{Q_0}\setminus\{Q_0\}$ be the collection of descendants of $Q_0$ that are maximal with respect to the property that either
\begin{align}
\label{Q0maxsubd}
\frac{\omega(Q_{j})}{\sigma(Q_{j})}<\frac{1}{N}\qquad \mbox{or}\qquad\frac{\omega(Q_{j})}{\sigma(Q_{j})}>N.
\end{align}
By maximality, it follows that
\begin{align}
\label{omegasigmabdd}
\frac{1}{N}\le \frac{\omega(Q)}{\sigma(Q)}\leq N
\qquad
\forall\, Q\in\dd_{\F_{N},Q_{0}}.
\end{align}
On the other hand, we can write
\begin{equation}
\label{eqn:cover-Q}
Q_{0}=\bigg(\bigcap\limits_{N\geq C_{0}} \bigcup\limits_{Q_j\in \F_{N}} Q_j\bigg)\cup \bigg(\bigcup\limits_{N\geq C_{0}} \Big(Q_{0}\setminus \bigcup\limits_{Q_j\in \F_{N}} Q_j\Big)\bigg)
=:
E_{0}\cup \bigg(\bigcup\limits_{N\geq C_{0}} E_N\bigg).
\end{equation}
The fact that $\sigma\ll \omega$ implies
\begin{equation}\label{e:E0}
\sigma(E_0)
\le
\sigma\big(\{x\in Q_0:\, \,  d\sigma/d\omega=0\, \, \, \mbox{or}\, \, \, d\sigma/d\omega=\infty\}\big)=0.
\end{equation}

The following proposition is the core result of this section.

\begin{proposition}\label{prop:CAD-d}
$\Omega_{\F_N,Q_0}$ is chord-arc domain for every $N\ge C_0$.
\end{proposition}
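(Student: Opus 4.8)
The plan is the following. By \cite[Lemma 3.61]{HM}, $\Omega_\star:=\Omega_{\F_N,Q_0}$ is already a bounded $1$-sided NTA domain with ADR boundary, with constants depending only on those of $\Omega$ and on the Whitney parameters (in particular, not on $N$); since a chord-arc domain is precisely an NTA domain with ADR boundary, the whole content of the proposition is to verify that $\Omega_\star$ satisfies the \emph{exterior} Corkscrew condition. Here one uses repeatedly the trivial inclusion $\Omega_\star\subset\Omega$, which gives $\ree\setminus\overline{\Omega}\subset\ree\setminus\overline{\Omega_\star}$, so that any exterior corkscrew point for $\Omega$ is automatically an exterior corkscrew point for $\Omega_\star$ (with the same ball). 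To check the exterior Corkscrew condition I would fix $x\in\partial\Omega_\star$ and $0<r\lesssim\diam(\partial\Omega_\star)\approx\ell(Q_0)$, pick a large constant $M$, and run exactly the case analysis of the proof of \eqref{mainb} implies \eqref{maind} in Section \ref{bimpliesd}: using the description \eqref{prop61} of $\partial\Omega_\star$, the geometry of the sawtooth regions, and \cite[Lemmas 5.3 and 5.9]{HM}, in every configuration except one an exterior corkscrew for $\Omega_\star$ relative to $B(x,r)\cap\partial\Omega_\star$ is furnished by the sawtooth geometry alone (e.g.\ when $\delta(x)>r/M$, or when some $Q_j\in\F_N$ meets $B(x,r)$ at scale $\gtrsim r$), with no information about $\Omega$ needed. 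The one remaining configuration produces a cube $Q\in\dd_{\F_N,Q_0}$ with $\ell(Q)\approx r/M$ and $|x-x_Q|\lesssim r/M$, and there it suffices, by the inclusion above, to produce an exterior corkscrew for $\Omega$ relative to $\Delta_Q$; that is, to show that $Q$ satisfies the $c_\star$-exterior Corkscrew condition of Definition \ref{def1.dyadcork} for some $c_\star=c_\star(N)>0$.

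The heart of the matter is therefore the claim that \emph{for every $N\ge C_0$ there is $c_\star=c_\star(N)>0$ such that every $Q\in\dd_{\F_N,Q_0}$ satisfies the $c_\star$-exterior Corkscrew condition}, and this is where the hypothesis $\sigma\ll\omega$ enters. I would prove it by contraposition: if $Q\subseteq Q_0$ fails the $c_\star$-exterior Corkscrew condition, then $\ree\setminus\overline{\Omega}$ is ``$c_\star\ell(Q)$-thin'' in every ball $B(z,r_Q/4)$, $z\in\Delta_Q$, and the goal is to show that, once $c_\star$ is small enough depending on $N$, this forces $\omega(Q)>N\,\sigma(Q)$ --- whence, by \eqref{Q0maxsubd} and maximality, $Q$ is contained in some cube of $\F_N$, so that $Q\notin\dd_{\F_N,Q_0}$. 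To get the lower bound on $\omega(Q)=C_0\,\sigma(Q_0)\,\omega^{X_0}(Q)$ one would combine: (i) Bourgain's non-degeneracy estimate \cite{B87}, Harnack's inequality and the Harnack Chain condition, to compare harmonic measures with poles at the relevant interior corkscrew points; (ii) the Caffarelli--Fabes--Mortola--Salsa comparison of $\omega^{X_0}(\Delta_Q)$ with $\ell(Q)^{n-1}\,G(X_0,X_Q)$ (its lower half being valid on $1$-sided NTA domains by the maximum principle) together with the Green function lower bound coming from the Harnack Chain; and (iii) a Carleson packing estimate for the family of cubes that fail the $c_\star$-exterior Corkscrew condition, obtained --- in the spirit of the localized ``integration by parts''/square function argument of \cite{HMT}, together with a variant of the Dahlberg--Jerison--Kenig sawtooth lemma applied on the approximating sawtooths $\Omega_{\F_N(\rho),Q_0}$ --- by exploiting $\sigma\ll\omega$; this last ingredient is what converts ``thin complement near a density-good cube'' into the quantitative contradiction $\omega(Q)>N\,\sigma(Q)$.

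I expect step (iii) --- turning the geometric thinness of $\ree\setminus\overline{\Omega}$ near a density-good cube into the contradiction $\omega(Q)>N\,\sigma(Q)$, uniformly over all scales $\ell(Q)\le\ell(Q_0)$ --- to be the main obstacle. By contrast, the reductions in the first paragraph are routine given Section \ref{bimpliesd} and the sawtooth machinery of \cite{HM}, once the claim on density-good cubes is available.
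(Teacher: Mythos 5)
Your first paragraph correctly identifies the structure: by \cite[Lemma 3.61]{HM} the sawtooth $\Omega_\star=\Omega_{\F_N,Q_0}$ is already a bounded $1$-sided NTA domain with ADR boundary, and what must be verified is the exterior Corkscrew condition, which one checks by fixing $x\in\partial\Omega_\star$, $0<r\lesssim\ell(Q_0)$, and a large $M$ and running the two-case analysis from Section \ref{bimpliesd}, reducing matters to the configuration where one finds $Q\in\dd_{\F_N,Q_0}$ with $\ell(Q)\approx r/M$ and $|x-x_Q|\lesssim r/M$.

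However, the way you want to close that remaining configuration has a genuine gap. You assert that it suffices to show that \emph{every} $Q\in\dd_{\F_N,Q_0}$ satisfies the $c_\star(N)$-exterior Corkscrew condition, and propose to prove this by contraposition: if $Q$ fails the exterior Corkscrew condition, then $\omega(Q)>N\,\sigma(Q)$. The Carleson packing / integration-by-parts argument cannot yield such a pointwise statement. What that argument produces (and what the paper actually proves in Lemma \ref{lamma:pack}) is, for each cube $Q\in\dd_{\F_N,Q_0}\cap\mc{B}^N$, the lower bound
\[
\sigma(Q)\lesssim_N\iint_{U_{Q,\epsilon}}|\nabla^2\mc{G}|^2\,\mc{G}\,dX,
\]
and then, after summing, the Carleson packing estimate \eqref{sigmaQQ0}. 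The packing bound controls only the total mass of the bad cubes; it is perfectly consistent with the existence of individual cubes in $\dd_{\F_N,Q_0}$ (i.e., cubes where $\omega(Q)/\sigma(Q)\approx_N 1$) which nevertheless fail the $\eta(N)$-exterior Corkscrew condition. In other words, one cannot conclude from ``$Q$ fails exterior Corkscrew'' that $\omega(Q)>N\,\sigma(Q)$, so the contrapositive you want does not hold, and the stronger claim that all of $\dd_{\F_N,Q_0}$ consists of exterior-corkscrew cubes is not established (and is not needed).

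The paper avoids this trap. From the packing condition one extracts \eqref{Q0''notinFNBN} and \eqref{case1andcase2}: for every $Q\in\dd_{\F_N,Q_0}$ there is a \emph{comparable nearby} cube $\widetilde{Q}\in\dd_Q$, with $2^{-C_N}\le\ell(\widetilde{Q})/\ell(Q)\le1$, such that either $\widetilde{Q}\in\F_N$ (in which case \cite[Lemma 5.9]{HM} produces an exterior ball for $\Omega_\star$ near $x_{\widetilde Q}$) or $\widetilde{Q}\notin\mc{B}^N$ (in which case $\widetilde{Q}$ does carry an $\eta(N)$-exterior corkscrew for $\Omega$, which, by the inclusion $\ree\setminus\overline{\Omega}\subset\ree\setminus\overline{\Omega_\star}$ that you correctly noted, serves as an exterior corkscrew for $\Omega_\star$). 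That pigeonhole step — finding a \emph{good} cube of comparable size inside $Q$, rather than proving $Q$ itself is good — is exactly what the Carleson condition buys and exactly what your outline misses. To repair your proposal you should replace the claim ``every $Q\in\dd_{\F_N,Q_0}$ has an exterior corkscrew'' with the packing statement of Lemma \ref{lamma:pack}, derive \eqref{case1andcase2}, and then finish Case 1 with the two-branch argument of the paper's Case 1b.
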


Observe that $E_N=Q_{0}\setminus \bigcup\limits_{Q_j\in \F_{N}} Q_j\subset \pom\cap\partial\Omega_{\F_N,Q_0}$ (cf. \eqref{prop61}). This, \eqref{eqn:cover-Q}, \eqref{e:E0} and the previous proposition give \eqref{maind} for the portion of the boundary corresponding to $Q_0$. Now we observe that $\pom=\bigcup_{Q\in\dd_{k_0}} Q$ and \eqref{maind} follows.

The proof of Proposition \ref{prop:CAD-d} being somewhat long, we break the argument into several steps. Fix any integer $N\ge C_0$. Let $\eta(N)$ be a sufficiently small constant depending on $N$ to be specified below. Recalling Definition \ref{def1.dyadcork} we set
\[
\mc{B}^{N}:=\left\{Q\in\dd_{Q_{0}}:\, \, Q \mbox{\ does not satisfy the $\eta(N)$-exterior Corkscrew condition}\right\}.
\]

Let us introduce some additional notation. For every $Q\in\dd_{Q_0}$, we set
\begin{equation}\label{eq2.5}
\alpha_Q := \left\{
\begin{array}{ll}
\sigma(Q)\,,&
\,\,{\rm if\,} Q\in\dd_{\F_{N},Q_{0}}\cap\mc{B}^{N},
\\[6pt]
0\,,&\,\,
{\rm otherwise}.
\end{array}
\right.
\end{equation}
For any subcollection $\dd'\subset\dd_{Q_{0}}$, we set
\begin{equation}\label{mut-def}
\mut(\dd'):= \sum_{Q\in\dd'}\alpha_{Q}.
\end{equation}
We shall see that the family $\dd_{\F_N, Q_0}\cap \mc{B}^N$ satisfies a packing condition with respect to the surface measure provided that $\eta(N)$ is small enough; that is, $\mut$ is a discrete
Carleson measure. In the argument that follows, we emphasize that constants are allowed to depend on $N$.

\begin{lemma}\label{lamma:pack}
Under the setup above, for each $N\ge C_0$ there exists $0<C_N<\infty$ (independent of $Q_0$) such that if $\eta(N)$ is small enough (depending on $N$ and the ADR and 1-sided NTA constants of $\Omega$), then  $\mut$ is a discrete
Carleson measure:
\begin{align}\label{sigmaQQ0}
\sup_{Q_0'\in \dd_{Q_0}} \frac{\mut(\dd_{Q_0'})}{\sigma(Q_0')}
=
\sup_{Q_0'\in \dd_{Q_0}} \frac1{\sigma(Q_0')}\sum_{\substack{Q\in\dd_{Q'_0}\\ Q\in\dd_{\F_{N},Q_{0}}\cap\mc{B}^{N}}} \sigma(Q)
\le C_N<\infty.
\end{align}
\end{lemma}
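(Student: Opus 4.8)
Here is how I would approach Lemma \ref{lamma:pack}.

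\emph{Step 1: reduce to an estimate for $\omega$.} Every cube carrying nonzero mass in \eqref{sigmaQQ0} lies in $\dd_{\F_{N},Q_{0}}$, so \eqref{omegasigmabdd} gives $\sigma(Q)\le N\,\omega(Q)$ for each such $Q$. Moreover, if $Q_0'\in\dd_{Q_0}$ is contained in some $Q_j\in\F_N$, then $\dd_{Q_0'}$ contains no cube of $\dd_{\F_N,Q_0}$ and \eqref{sigmaQQ0} is trivially true for that $Q_0'$; otherwise $Q_0'\in\dd_{\F_N,Q_0}$ and \eqref{omegasigmabdd} also gives $\omega(Q_0')\le N\,\sigma(Q_0')$. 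Hence it suffices to find $C_N<\infty$ with
\[
\sum_{\substack{Q\in\dd_{Q_0'}\\ Q\in\dd_{\F_{N},Q_{0}}\cap\mc{B}^{N}}}\omega(Q)\ \le\ C_N\,\omega(Q_0')\qquad\text{whenever }Q_0'\in\dd_{\F_N,Q_0}.
\]

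\emph{Step 2: geometric content of $\mc{B}^N$.} If $Q$ fails the $\eta(N)$-exterior Corkscrew condition, then inside a fixed dilate of $B_Q$ every point of $\Omega_{\rm ext}$ lies within $\lesssim\eta(N)\,\ell(Q)$ of $\pom$, so that, by the ADR property, $|\Omega_{\rm ext}\cap B_Q|\lesssim\eta(N)\,\ell(Q)^{n+1}$. Thus $B_Q$ lies in $\Omega$ up to an $\eta(N)$-fraction of its volume, and $\Omega\cap B_Q$ is quantitatively connected via the 1-sided NTA hypothesis; a definite portion of the Whitney region $U_Q$ then sits over the ``thin'' part of $\Omega_{\rm ext}$, and this is the feature that will be detected by a square function.

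\emph{Step 3: a localized square function, in the spirit of \cite{HMT}, \cite{KP}.} I would attach to $Q_0'$ a solution (a suitably normalized Green function of $\Omega$, or a logarithmic comparison of the Green functions of $\Omega$ and of the sawtooth $\Omega_{\F_N,Q_0}$) and prove, by an integration-by-parts argument, a Carleson-type bound $\iint_{T_{Q_0'}}(\cdots)\,dX\lesssim_N\sigma(Q_0')\approx_N\omega(Q_0')$. This step must genuinely use the quantitative comparability $\omega\approx_N\sigma$ on $\dd_{\F_N,Q_0}$ together with the capacity-density / Wiener regularity consequence of ADR already invoked in Section~\ref{dimpliesc}; no such bound holds for a general bounded solution, which is precisely why the final constant $C_N$ (and the whole lemma) is necessarily $N$-dependent. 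Conversely, for each $Q\in\dd_{\F_N,Q_0}\cap\mc{B}^N$ one shows $\iint_{U_Q}(\cdots)\,dX\gtrsim_N\omega(Q)$: the thinness of $\Omega_{\rm ext}$ near $Q$ forces the chosen solution to degenerate on a definite portion of $U_Q$ relative to what the ``good'' (exterior-corkscrew) geometry would dictate, and a Caccioppoli/Harnack estimate then produces the required lower bound on the integrand there; this is where $\eta(N)$ must be chosen small enough, depending on $N$, to beat the implicit constants. Summing over $Q\in\dd_{\F_N,Q_0}\cap\mc{B}^N$ with $Q\subset Q_0'$ and invoking the bounded overlap of the family $\{U_Q\}_Q$ gives
\[
\sum_{Q}\omega(Q)\ \lesssim_N\ \sum_{Q}\iint_{U_Q}(\cdots)\,dX\ \lesssim_N\ \iint_{T_{Q_0'}}(\cdots)\,dX\ \lesssim_N\ \omega(Q_0'),
\]
which together with Step~1 yields \eqref{sigmaQQ0}.

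\emph{Main obstacle.} The real work is the PDE step, on both sides: establishing the upper Carleson bound for the square function requires a careful integration by parts that exploits $\omega\approx_N\sigma$ on the sawtooth (and the correct choice of weight and of solution is itself a nontrivial technical point), while the lower bound at bad cubes must convert the purely measure-theoretic smallness $|\Omega_{\rm ext}\cap B_Q|\lesssim\eta(N)\,\ell(Q)^{n+1}$ into a quantitative, scale-invariant estimate on the solution over $U_Q$, uniformly in the location and scale of $Q\in\mc{B}^N$; it is here that the interior Corkscrew and Harnack Chain conditions are used in an essential way, and the fact that $\eta(N)$ may be taken arbitrarily small after $N$ is fixed is what makes the bad-cube contributions dominate a fixed fraction of the a priori bounded square function.
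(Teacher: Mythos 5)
Your outline captures the broad skeleton of the paper's argument: a localized square function bounded above by a Carleson estimate (via integration by parts on a sawtooth, exploiting $\omega\approx_N\sigma$ there), and bounded below at each bad cube by using the measure-theoretic smallness of $\Omega_{\rm ext}$ near $Q$, with bounded overlap of Whitney regions closing the loop. That skeleton is indeed what the paper does. But the two steps you flag as ``the real work'' are precisely the ones you leave unfilled, and the mechanisms you gesture at are not the ones that actually work.

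For the lower bound at a bad cube, the paper does \emph{not} show that the Green function ``degenerates'' on $U_Q$ --- indeed by CFMS and the stopping-time normalization $\mathcal{G}(X)/\delta(X)\approx_N 1$ uniformly on $U_{Q,\epsilon}$ for $Q\in\dd_{\F_N,Q_0}$, so no degeneration is possible. The actual device is different and essential: test the weak formulation of harmonicity against a bump $\phi_Q$ so that $\sigma(Q)\lesssim_N\omega(Q)\lesssim\int\phi_Q\,d\omega=-\iint_\Omega\nabla\mathcal{G}\cdot\nabla\phi_Q$, then \emph{add and subtract a constant vector} $\vec{\alpha}$ (the average of $\nabla\mathcal{G}$ over $U_{Q,\epsilon}$). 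Since $\vec\alpha$ is constant, $\iint_{\ree}\vec\alpha\cdot\nabla\phi_Q=0$, so one trades the identity for a piece on $\Omega_{\rm ext}$ (small because $|\Omega_{\rm ext}\cap B_Q'|\lesssim\eta(N)r_Q^{n+1}$ and $|\vec\alpha|\lesssim N$), a piece on the thin collar $\Sigma_\epsilon$ (small after choosing $\epsilon$), and the main term $\iint_{U_{Q,\epsilon}}|\nabla\mathcal{G}-\vec\alpha|$. A Poincar\'e inequality then converts that mean-oscillation term into $\big(\iint_{U_{Q,\epsilon}}|\nabla^2\mathcal{G}|^2\mathcal{G}\big)^{1/2}$. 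This $\vec\alpha$-subtraction is what makes the exterior-corkscrew failure --- a statement about $\Omega_{\rm ext}$, not about $\mathcal{G}$ --- leave a fingerprint in a square function of $\mathcal{G}$; a Caccioppoli/Harnack argument alone, as you propose, does not produce it. Your alternative suggestion of a logarithmic comparison of Green functions is also not what the paper uses and would need its own (substantial) justification. The upper Carleson bound you describe is essentially the paper's: an integration by parts on a truncated sawtooth $\Omega^\star_M$ reduces $\iint|\nabla^2\mathcal{G}|^2\mathcal{G}$ to a boundary integral of $(\mathcal{G}/\delta)^3$, controlled by $\sigma(Q_0')$ via the ADR bound for sawtooth boundaries and the CFMS/stopping-time bound $\mathcal{G}/\delta\lesssim_N 1$ --- but note this requires the extended stopping family $\F_N^\star$ to preserve $\omega/\sigma\approx_N1$ at the $\epsilon$-descendant generation, a point your outline does not address. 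As written, your proposal identifies the shape of the proof but not its core lemma; the $\vec\alpha$-trick and the Poincar\'e reduction are the content, and without them the lower bound does not close.
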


\begin{proof}
We first normalize the Green function $G(X_{0},\cdot)$ as we did with $\omega^{X_0}$. Set
\[
\mc{G}(Y):=C_0\,\sigma(Q_{0})\,G(X_{0},Y),
\]
where $X_{0}$ is the corkscrew point relative to $\kappa\,\Delta_{Q_0}^*$ as explained above and $C_{0}$ is the constant as in \eqref{normalizedomega*}. Note that our choice of $X_0$ guarantees that $\mc{G}\in W^{1,2}_0(2\,B_{Q_0}^*\cap\Omega)$ and $\mc{G}$ is harmonic in $2\,B_{Q_0}^*\cap\Omega$. Because all of our estimates below take place in $2\,B_{Q_0}^*\cap\Omega$ (since $T_{Q_0}\subset B_{Q_0}^*\cap\Omega$), this observation ensures the computations below are meaningful. Also, use of a Caffarelli-Fabes-Mortola-Salsa estimate and doubling of $\omega$ in $2\,\Delta_{Q_0}^*$ are legitimate under this regime. We note that in the harmonic case that we are currently considering these estimates have been proved when the domain is bounded in \cite{Aikawa}. Passing from bounded to unbounded requires a limiting argument along the lines in \cite[Section 3]{HM}. Further details will appear in the forthcoming paper  \cite{HMT-general}.

Fix a cube $Q\in \dd_{\F_{N},Q_{0}}\cap \mc{B}^{N}$ and a point $z_{Q}\in\Delta_{Q}\subset Q$. Set $B_Q':=B(z_{Q}, r_{Q}/4)$
and let $\phi_{Q}\in C^{\infty}_{0}(B_Q')$ with $0\leq \phi_{Q}\leq 1$, $\phi\equiv 1$ on $\frac12 B_Q'$, and $\|\nabla \phi_{Q}\|_{\infty}\lesssim r_{Q}^{-1}$, where $r_{Q}\approx \ell(Q)$. Then, from \eqref{omegasigmabdd} and \cite{HMT-general} (see also \cite{HM}), there exists a uniform constant $C_1>1$ (depending only on the ADR and 1-sided NTA constants of $\Omega$) such that
\begin{align}
\label{sigmaIandII}
(N\,C_1)^{-1} \sigma(Q)
&\le
C_1^{-1}\omega(Q)
\le \int\limits_{\partial\Omega}\phi_{Q}\,\rd\omega
=
-\iint\limits_{\Omega}\nabla \mc{G}\cdot\nabla\phi_{Q}\,\rd X\\ \nonumber
&=
-\iint\limits_{\Omega}(\nabla \mc{G}-\vec{\alpha})\cdot\nabla \phi_{Q}\,\rd X
-
\iint\limits_{\ree}\vec{\alpha}\cdot\nabla \phi_{Q}\rd X+\iint\limits_{\Omega_{\rm ext}}\vec{\alpha}\cdot\nabla \phi_{Q}\,\rd X\\ \nonumber
&=
-\iint\limits_{\Omega}(\nabla \mc{G}-\vec{\alpha})\cdot\nabla \phi_{Q}\,\rd X
+
\iint\limits_{\Omega_{\rm ext}}\vec{\alpha}\cdot\nabla \phi_{Q}\,\rd X
\\ \nonumber
&=:
-\mathcal{I}+\mathcal{II}.
\end{align}
Here $\vec{\alpha}$ is a constant vector given by
\[
\vec{\alpha}:=\frac{1}{|U_{Q,\epsilon }|}\iint\limits_{U_{Q,\epsilon }} \nabla \mc{G}\rd X,
\]
where $\epsilon$ is a small constant depending on $N$ that we specify below and $U_{Q, \epsilon}:=\Omega_{\F_{N}(\epsilon\, r_Q),Q}$ is the geometric sawtooth region relative to $\F_{N}(\epsilon\, r_Q)$ defined in \S\ref{ss:grid}. Note that (see Figure \ref{UandSigma})
$$
(\Omega\setminus U_{Q,\epsilon})\cap B_Q'
\subset
\Sigma_\epsilon
:=
\{X\in\Omega:\, \, \delta(X)\lesssim \epsilon\,  \ell(Q)\}.
$$
\begin{figure}[!ht]
\def\firstcircle{(0,0) circle (4cm)}
\def\secondcircle{(0,0) circle (1cm)}
\centering
\begin{tikzpicture}[scale=.4]
\draw plot [smooth] coordinates {(-9.6,6)(-9.3,4)(-8.8,2.8)(-8.5,2.1)(-8,1)(-6,.6) (-4,.3)(-2,-.3) (0,0)(2,.3)(4,-.3)(6,-.6)(8,0)(8.5,2.1)(8.8,2.8)(9.3,4)(9.6,6)(3,6.3)(-3,5.8)(-9.6,6)};
\node at (5.5,4.5) {$U_{Q,\epsilon}=\Omega_{\F_{N}(\epsilon r_Q),Q}$};
\draw[dashed, yshift=-1cm, red] plot [smooth] coordinates {(-8,1)(-6,.6) (-4,.3) (-2,-.3) (0,0)(2,.3)(4,-.3)(6,-.6)(8,-1)};
\filldraw (0,-1) circle (2pt) node[below] {$z_{Q}$};
\node[left, red] at (-8,0) {$\partial\Omega$};
\draw \firstcircle;
\begin{scope}
\clip \firstcircle;
\draw[dashed, fill=gray] plot [smooth] coordinates {(-8,1)(-6,.6) (-4,.3) (-2,-.3) (0,0)(2,.3)(4,-.3)(6,-.6)(8,-1)} (8,-1)--(8,4)--(-8,4)--(-8,1);
\draw[dashed, yshift=-1cm, fill=gray, opacity=0.5] plot [smooth] coordinates {(-8,1)(-6,.6) (-4,.3) (-2,-.3) (0,0)(2,.3)(4,-.3)(6,-.6)(8,-1)} (8,-1)--(8,5)--(-8,5)--(-8,1);
\end{scope}
\draw[dashed] plot [smooth] coordinates {(-6,.6) (-4,.3) (-2,-.3) (0,0)(2,.3)(4,-.3)(6,-.6)};
\draw[dashed, yshift=-1cm] plot [smooth] coordinates {(-6,.6) (-4,.3) (-2,-.3) (0,0)(2,.3)(4,-.3)(6,-.6)};
\node at (5,-3.2) {$\Sigma_{\epsilon}\cap 2\,B_Q'$};
\draw[->]  (2,-.3) edge[bend left=10] (4.6,-2.6);
\draw[<->] (6,-.6)--(6,-1.6);
\node[right] at (6.4,-1.1) {$\lesssim\epsilon\, \ell(Q)$};
\node[below] at (0,-4) {$2\,B_Q'$};
\end{tikzpicture}
\caption{$U_{Q, \epsilon }$ and $\Sigma_{\epsilon}$.}
\label{UandSigma}
\end{figure}
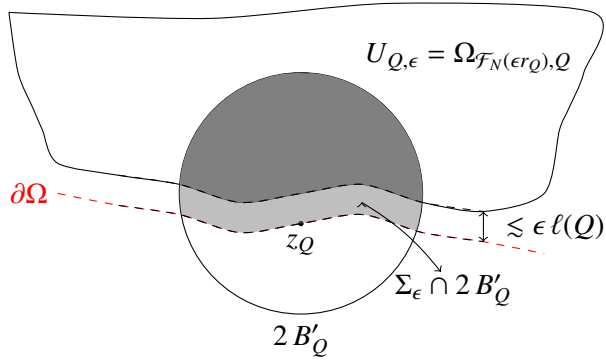

\noindent Next, observe that for every $X\in U_{Q,\epsilon}$ one has $\epsilon\,\ell(Q) \lesssim\delta(X)\lesssim \dist(X,\Delta_{Q})\lesssim \ell(Q)$.
Moreover, $|U_{Q, \epsilon}|\gtrsim \ell(Q)^{n+1}$ with implicit constant independent of the number $\epsilon$. Indeed, since $Q\in\dd_{\F_N,Q_0}$, we have that $Q\in\dd_{\F_N(\epsilon\,r_Q),Q}$ provided $\epsilon$ is small enough. Also, since $\W_Q\neq\emptyset$, there is $I\in\W_Q$ such that $\ell(I)\approx\ell(Q)$ and $\dist(I,Q)\lesssim \ell(Q)$. In particular $I\subset \Omega_{\F_{N}(\epsilon\, r_Q),Q}=U_{Q,\epsilon}$. Therefore,
\begin{align}
\label{IinUQepsilon}
|U_{Q,\epsilon }|\geq |I|\approx \ell(Q)^{n+1},
\end{align}
and this estimate does not depend on $\epsilon$ (provided $\epsilon$ is small enough).

We now show $|\vec{\alpha}|\leq C_{N}$.  To this end, first observe that $U_{Q,\epsilon }\subset T_{Q}$, where $T_{Q}$ is the Carleson box relative to $Q$. Using this observation, Caccioppoli's inequality, Harnack's inequality, a Caffarelli-Fabes-Mortola-Salsa estimate, and doubling of $\omega$  (see \cite{Aikawa} in the bounded case or \cite{HMT-general} in general),  we obtain
\begin{align}\label{I2}
|\vec{\alpha}\,|
&\lesssim
\ell(Q)^{-(n+1)}\iint_{T_Q} |\nabla \mc{G}|\,dX
\\ \nonumber
&
\le
\ell(Q)^{-(n+1)}\sum_{Q'\in\dd_Q} \sum_{I\in \W_{Q'}^*} |I|^{\frac12}\Big(\iint_{I^*} |\nabla \mc{G}|^2\,dX\Big)^{\frac12}
\\ \nonumber
&
\lesssim
\ell(Q)^{-(n+1)}\sum_{Q'\in\dd_Q} \sum_{I\in \W_{Q'}^*} |I| \frac{\mc{G}(X(I))}{\delta(X(I))}
\\ \nonumber
&
\approx
\ell(Q)^{-(n+1)}\sum_{Q'\in\dd_Q} \sum_{I\in \W_{Q'}^*} |I| \frac{\omega(Q')}{\sigma(Q')}
\\ \nonumber
&\lesssim
\ell(Q)^{-(n+1)}\sum_{Q'\in\dd_Q} \omega (Q')\,\ell(Q')
\\ \nonumber
&
=
\ell(Q)^{-n}\sum_{k=0}^\infty 2^{-k}\sum_{\substack{Q'\in\dd_Q\\ \ell(Q')=2^{-k}\,\ell(Q)}} \omega(Q')
\\ \nonumber
&
\lesssim
\frac{\omega(Q)}{\sigma(Q)}
\le N.
\end{align}
Note that the last estimate follows from \eqref{omegasigmabdd}, because $Q\in\dd_{\F_N,Q_0}$ and there is no dependence on $\epsilon$.

We are now ready to estimate $\mathcal{II}$ in \eqref{sigmaIandII}.  Recall that $Q\in \mc{B}^{N}$. By \cite[Lemma 5.7]{HM}, failure of the $\eta(N)$-exterior Corkscrew property implies that $|\Omega_{\rm ext}\cap B_Q'|\lesssim\eta(N)\,r_Q^{n+1}$. This and  \eqref{I2} give
\begin{equation}\label{sigmaII}
|\mathcal{II}|
\lesssim
|\vec{\alpha}\,|\,r_Q^{-1}\,|\Omega_{\rm ext}\cap B_Q'|
\lesssim
N\,\eta(N)\, r_Q^{n}
\approx
N\,\eta(N)\, \sigma(Q)
<
\frac1{4\,N\,C_1}\,\sigma(Q),
\end{equation}
where in the last estimate we have chosen $\eta(N)$ sufficiently small ($\eta(N)\le (N^2\,M)^{-1}$ with $M\gg 1$ depending only on the ADR and 1-sided NTA constants).

We next estimate $\mathcal{I}$. To start,
\begin{multline}\label{dede}
|\mathcal{I}|
\lesssim
r_Q^{-1}\,\Big(
\iint\limits_{(\Omega\setminus U_{Q,\epsilon})\cap B_Q'} |\nabla \mc{G}-\vec{\alpha}|\,\rd X
+
\iint\limits_{U_{Q, \epsilon }} |\nabla \mc{G}-\vec{\alpha}| \, \rd X\Big)\\
\\
\lesssim
\ell(Q)^{-1}\,\Big( \iint\limits_{B_Q'\cap \Sigma_\epsilon} |\nabla \mc{G}-\vec{\alpha}|\,\rd X
+
\iint\limits_{U_{Q, \epsilon }} |\nabla \mc{G}-\vec{\alpha}| \, \rd X\Big)
=:\ell(Q)^{-1}\big(\mathcal{I}_1+\mathcal{I}_{2}\big).
\end{multline}
Using \cite[Lemma 5.3]{HM}, we obtain
\begin{align}
\label{sigmaI1}
\mathcal{I}_1\lesssim
|\vec{\alpha}|\,|B_Q'\cap \Sigma_\epsilon|+\iint\limits_{B_Q'\cap \Sigma_\epsilon} |\nabla \mc{G}|\,\rd X
\lesssim
N\,\epsilon\,\ell(Q)^{n+1}+ \mathcal{I}_3
\lesssim
N\,\epsilon\,\ell(Q)\,\sigma(Q)+\mathcal{I}_3.
\end{align}
We estimate $\mathcal{I}_3$, as follows. Given $I\in\W$, let $Q_I^*$ denote one of its nearest cubes with $\ell(Q_I^*)=\ell(I)$. Using the same ideas as in \eqref{I2},
\begin{multline*}
\mathcal{I}_3
\le
\sum_{\substack{I\in \W: I\cap B_Q'\neq\tinyemptyset\\ \ell(I)\lesssim \epsilon\,\ell(Q)}} \iint_{I} |\nabla \mc{G}|\,\rd X
\le
\sum_{\substack{I\in \W: I\cap B_Q'\neq\tinyemptyset\\ \ell(I)\lesssim \epsilon\,\ell(Q)}}
|I|^{\frac12}\Big(\iint_{I}  |\nabla \mc{G}|^2\,\rd X\Big)^{\frac12}
\\ \nonumber
\lesssim
\sum_{\substack{I\in \W: I\cap B_Q'\neq\tinyemptyset\\ \ell(I)\lesssim \epsilon\,\ell(Q)}}  |I| \frac{\mc{G}(X(I))}{\delta(X(I))}
\approx
\sum_{\substack{I\in \W: I\cap B_Q'\neq\tinyemptyset\\ \ell(I)\lesssim \epsilon\,\ell(Q)}}  |I| \frac{\omega(Q_I^*)}{\sigma(Q_I^*)}
\\ \nonumber
\lesssim
\sum_{\substack{I\in \W: I\cap B_Q'\neq\tinyemptyset\\ \ell(I)\lesssim \epsilon\,\ell(Q)}}  \omega(Q_I^*)\, \ell(I)
=
\sum_{k: 2^{-k}\lesssim \epsilon\,\ell(Q)}2^{-k}
\sum_{\substack{I\in \W: I^*\cap B'_Q\neq\tinyemptyset\\ \ell(I)=2^{-k}}}  \omega(Q_I^*).
\end{multline*}
Note that if $k$ is fixed, then the family $\{Q_I^*\}_{I\in\W: \ell(I)=2^{-k}}$ has bounded overlap. Also, if $I$ meets $B_Q'$, then $\ell(I)\lesssim r_Q\approx\ell(Q)$. Hence $Q_I^*\subset C\,\Delta_{Q}'=C\,B_Q'\cap\pom$ for some uniform constant $C$. Thus,
\begin{align}\label{est-I3}
\mathcal{I}_3
\lesssim
\omega(C\,\Delta'_Q)\sum_{k: 2^{-k}\lesssim \epsilon\,\ell(Q)}2^{-k}
\lesssim
\omega(Q) \epsilon\,\ell(Q)
\lesssim
N\,\epsilon\,\ell(Q)\,\sigma(Q),
\end{align}
where we used doubling of $\omega$  (see \cite{Aikawa} in the bounded case or \cite{HMT-general} in general) and Harnack's inequality; moreover, in the last estimate we invoked \eqref{omegasigmabdd}, since $Q\in\dd_{\F_N,Q_0}$. Gathering \eqref{dede}, \eqref{sigmaI1}, and \eqref{est-I3}, we obtain
\begin{align}
\label{I1final}
|\mathcal{I}|
\lesssim
N\,\epsilon\,\,\sigma(Q)+\ell(Q)^{-1}\,\mathcal{I}_2
<
\frac1{4\,N\,C_1}\,\sigma(Q)+\ell(Q)^{-1}\,\mathcal{I}_2
\end{align}
provided we choose $\epsilon$ sufficiently small ($\epsilon\le (N^2\,M)^{-1}$ with $M\gg 1$ depending only on the ADR and 1-sided NTA constants will suffice). For later use, we assume that $\epsilon=2^{-K_\epsilon}$ for some $K_\epsilon\in \N$.

From \eqref{sigmaIandII}, \eqref{sigmaII}, and \eqref{I1final}, it follows  that
\begin{align}\label{sigmaQ1412}
(C_1\,N)^{-1}\,\sigma(Q)
\leq
|\mathcal{I}|+|\mathcal{II}| \leq \frac{1}{2\,N\,C_1}\sigma(Q)+\ell(Q)^{-1}\,\mathcal{I}_2.
\end{align} Upon rearranging the inequality, we conclude that
$$
\sigma(Q)
\lesssim
2\,C_1\,N\,\ell(Q)^{-1}\,\mathcal{I}_2.
$$
Recall that at this point $\eta(N)$ and $\epsilon=\epsilon(N)$ are fixed and depend on $N$ and the ADR and 1-sided NTA constants of $\Omega$.

To continue with the previous estimate, we again use Harnack's inequality, a Caffarelli-Fabes-Mortola-Salsa estimate and that $\omega$ is doubling (see \cite{Aikawa} in the bounded case or \cite{HMT-general} in general):
\begin{equation}
\label{ferfrfR}
\frac{\mc{G}(X)}{\ell(Q)}
\approx_N
\frac{\mc{G}(X)}{\delta(X)}
\approx_N
\frac{\omega(Q)}{\sigma(Q)}
\approx_N 1,
\qquad \forall\, X\in U_{Q,\epsilon}
.
\end{equation}
Next we need a sharper version of a Poincar\'e inequality from \cite[Lemma 4.8]{HM}. 
In that reference such estimate takes place on the set $U_{Q,\epsilon}$, but its right hand  has a slight fattening of the Whitney regions. However, a careful examination of the proof 
of \cite[Lemma 4.8]{HM} reveals that one can obtain the Poincare inequality without 
fattening the Whitney regions since they are comprised of Whitney cubes.  Details of the latter approach will appear in  \cite{HMT}. This and \eqref{ferfrfR} give
\begin{multline}\label{trfrg}
\sigma(Q)
\lesssim_N
\ell(Q)^{-1}\,|U_{Q,\epsilon}|^{\frac12}\,\Big(\iint\limits_{U_{Q, \epsilon }} |\nabla \mc{G}-\vec{\alpha}|^2 \, \rd X\Big)^{\frac12}
\\
\lesssim_{N}
\ell(Q)^{\frac{n+1}2}\,
\Big(\iint\limits_{U_{Q, \epsilon }} |\nabla^2 \mc{G}(X)|^2 \, \rd X\Big)^{\frac12}
\approx_N
\sigma(Q)^{\frac12}\,\Big(\iint\limits_{U_{Q, \epsilon }} |\nabla^2 \mc{G}(X)|^2 \,\mc{G}(X) \rd X\Big)^{\frac12}.
\end{multline}
Hiding this time $\sigma(Q)^{\frac12}$, we conclude that
\begin{align}
\label{sigmaQGreen}
\sigma(Q) & \lesssim_{N} \iint\limits_{U_{Q, \epsilon }} |\nabla^{2} \mc{G}|^{2}\,\mc{G}\, \rd X,
\qquad
\forall\,
Q\in \dd_{\F_N, Q_0}\cap \mc{B}^N.
\end{align}

Recall that our goal is to obtain \eqref{sigmaQQ0}. Fix $Q'_{0}\subset \dd_{Q_{0}}$. We may assume that $Q_0'\in\dd_{\F_{N},Q_{0}}$ (otherwise $\mut(\dd_{Q_0'})=0$ and the desired estimate follows), in which case we have $Q_j\subsetneq Q_0'$ whenever $Q_0'\cap Q_j\neq\emptyset$. Hence
\[
\dd_{\F_{N}',Q_{0}'}=\dd_{Q_{0}'}\cap \dd_{\F_{N},Q_{0}},
\qquad\mbox{where}\qquad
\F_{N}':=\{Q_{j}\in\F_{N}:\, \, Q_{j}\subsetneq Q_{0}'\}.
\]
Recall that we chose $\epsilon$ to be of the form $\epsilon =2^{-K_{\epsilon}}$ for some $K_\epsilon\in\N$. Let
\[
\F^{\star}_{N}:=\bigcup\limits_{Q\in\F'_{N}}\big\{Q'\in\dd_{Q}:\, \, \ell(Q')= 2^{-K_{\epsilon }}\ell(Q)=\epsilon \ell(Q)\big\}.
\]
Note that $\F^{\star}_{N}\subset\dd_{Q_{0}'}$ and it is a disjoint family.
For ease of notation, we let
$$
\Omega^{\star}
:=
\Omega_{\dd_{\F_{N}^{\star},Q_{0}'}}^*
:=
\interior\Big(\bigcup\limits_{Q\in \dd_{\F_{N}^{\star},Q_{0}'}}U_{Q}^*\Big)
:=
\interior\Big(\bigcup\limits_{Q\in \dd_{\F_{N}^{\star},Q_{0}'}}\bigcup_{I\in W_Q^*} I^{**}\Big),
$$
where $I^{**}=(1+2\,\lambda)\,I$.   Thus,
\begin{multline}
\label{unionUqepsilon}
\bigcup\limits_{Q\in \dd_{\F_{N}',Q_{0}'}}U_{Q,\epsilon } \subset \bigcup\limits_{Q\in \dd_{\F_{N}',Q_{0}'}}\, \bigcup\limits_{\substack{Q'\in\dd_{Q} \\ \epsilon \ell(Q)<\ell(Q')\leq \ell(Q)}}U_{Q}
\subset\bigcup\limits_{Q\in \dd_{\F_{N}^{\star},Q_{0}'}}U_{Q}
\\
\subset
\interior\Big(\bigcup\limits_{Q\in \dd_{\F_{N}^{\star},Q_{0}'}}U_{Q}^*\Big)=
\Omega^{\star}.
\end{multline}
This, the fact that the family $\{U_{Q,\epsilon}\}_{Q\in\dd}$ has have bounded overlap (depending on $\epsilon$ and hence on $N$), see \cite{AHMNT} or \cite{HMT}, and \eqref{sigmaQGreen} yield
\begin{align}
\label{OmegastarFstarN}
\mut(\dd_{Q_0'})
\le
\sum_{\substack{Q\in\dd_{Q'_0}\\ Q\in\dd_{\F_{N},Q_{0}}\cap\mc{B}^{N}}} \sigma(Q)
\lesssim_{N}
\sum\limits_{Q\in\dd_{\F_{N}',Q_{0}'}}\iint\limits_{U_{Q,\epsilon }} |\nabla^{2} \mc{G}|^{2}\mc{G}\rd X
\lesssim_{N} \iint\limits_{\Omega^{\star}} |\nabla^{2} \mc{G}|^{2}\mc{G}\rd X.
\end{align}

We now claim that
\begin{align}\label{frefr}
\frac{\omega(Q)}{\sigma(Q)}\approx_N 1,
\qquad \forall\,Q\in \dd_{\F_{N}^{\star},Q_{0}'}.
\end{align}
This is clear if $Q\in \dd_{\F'_{N},Q_{0}'}\subset \dd_{\F_{N},Q_{0}}$  by \eqref{omegasigmabdd}. Suppose next that $Q\in \dd_{\F_{N}^{\star},Q_{0}'}\setminus\dd_{\F'_{N},Q_{0}'}$. Then there is $Q_j\in \F_N'$ such that $Q\subset Q_j$. We can split $Q_j$ into its $\epsilon$-descendants (recall that $\epsilon=2^{-K_\epsilon}$) and we can find $Q_j'\in \F_{N}^{\star}$ such that $Q_j'\cap Q\neq\emptyset$. In turn, since $Q\in \dd_{\F_{N}^{\star},Q_{0}'}$, necessarily $Q_j'\subsetneq Q\subset Q_j$ with $\ell(Q_j')=\epsilon\,\ell(Q_j)$. Using this, ADR, doubling of $\omega$, and \eqref{omegasigmabdd} which clearly holds for the father $\widehat{Q}_j$ of $Q_j$, we conclude that $$
\frac{\omega(Q)}{\sigma(Q)}\approx_N \frac{\omega(\widehat{Q}_j)}{\sigma(\widehat{Q}_j)}\approx_N 1,
$$ as desired.

In order to integrate by parts, we need to get away from the boundary. We would like to introduce a new domain as in Figure \ref{integraionbypartsfigure} but in a way that the new domain has ADR boundary with bounds that are independend of $M$. To do so, we introduce a large parameter $M$ and define $\F_{N,M}^{\star}=\F_{N}^{\star}(2^{-M}\,\ell(Q_0'))$; that is, $\F_{N,M}^{\star}\subset\dd_{Q_0'}$ is the family of maximal cubes of the collection $\F_{N}^{\star}$ augmented to include all dyadic cubes of size smaller than or equal to $2^{-M}\,\ell(Q_0)$. In particular, $Q\in\dd_{\F_{N,M}^{\star}, Q_0'}$ if and only if $Q\in \dd_{\F_{N}^{\star},Q_0'}$ and $\ell(Q)>2^{-M}\,\ell(Q_0')$. Clearly,
$\dd_{\F_{N,M}^{\star}, Q_0'}\subset \dd_{\F_{N,M'}^{\star}, Q_0'}$ if $M\le M'$, and therefore, $\Omega^\star_M:=\Omega_{\F_{N,M}^{\star} ,Q_0'}^*\subset \Omega_{\F_{N,M'}^{\star} ,Q_0'}^*\subset \Omega_{\dd_{\F_{N}^{\star} ,Q_0'}}^*=\Omega^\star$. This and the monotone convergence theorem give
\begin{align}\label{IBP-limit}
\iint_{\Omega^\star} |\nabla^2 \mc{G}|^2\,\mc{G}\,dX
=
\lim_{M\to\infty} \iint_{\Omega^\star_M} |\nabla^2 \mc{G}|^2\,\mc{G}\,dX.
\end{align}
Thus, we may bound each of the right hand terms in \eqref{OmegastarFstarN} with bounds that are uniform in $M$ using integration by parts. 
\def\firstrectangle{(-5,-5) rectangle (8,.7)}
\begin{figure}[!ht]
\centering
\begin{tikzpicture}[scale=0.8]
\node[above] at (5,2) {$\Omega^{\star}_M$};
\draw[thick, fill=gray, opacity=0.6, name path=Sawtooth1] plot[smooth] coordinates {(-5,3)(-4,0)(-3,-1)(-2,.5)(-1,-.5)(0,1) (1,2) (2,1) (3,1.5) (5,-1) (6,0)(7,3)(0,2.9)(-5,3)};
\draw plot[smooth] coordinates {(-5,-2)(-3,-1)(0,-.4)(5,-1)(7,-2)};
\draw[white, dashed, name path=L1] (-5,.7)--(7,.7);
\clip \firstrectangle;
\draw[thick, fill=gray, opacity=0.6] plot[smooth] coordinates {(-5,3)(-4,0)(-3,-1)(-2,.5)(-1,-.5)(0,1) (1,2) (2,1) (3,1.5) (5,-1) (6,0)(7,3)(0,2.9)(-5,3)};
\path [name intersections={of=Sawtooth1 and L1, by={a,b,c,d}}];
\draw[red, dashed] (a)--(b);
\draw[red, dashed] (c)--(d);
\node[below] at (7,-2) {$\partial\Omega$};

\draw[<->] (-0.5,-.4)--(-0.5,.69);
\node[right] at (-0.48,0.15) {\footnotesize$\approx 2^{-M}$};

\end{tikzpicture}
\null\vskip-2.5cm\null
\caption{The domain $\Omega^{\star}_M$ where we do integration by parts.}
\label{integraionbypartsfigure}
\end{figure}
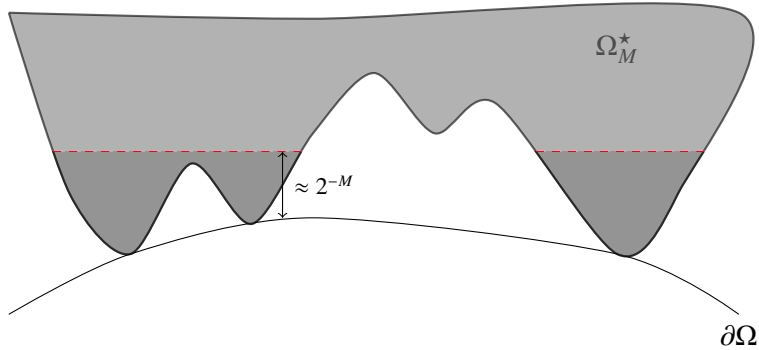

Fix $M$ large. Note that $\Omega^{\star}_M\subset \overline{\Omega^{\star}_M}\subset 2\,B_{Q_0^*}\cap\Omega$, and therefore, we again have the needed PDE properties at our disposal. Let ``$\partial$'' denote a fixed generic derivative. Easy calculations show that in $2\,B_{Q_0^*}\cap\Omega$ we can use that $\mc{G}$ is harmonic and then
\[
\Delta((\partial \mc{G})^{2})
=
2\,\div[(\partial \mc{G})\,\nabla(\partial \mc{G})]=2\,|\nabla(\partial \mc{G})|^{2}
\]
 and
\[
\Delta((\partial \mc{G})^{2})\,\mc{G}
=
\div[\nabla((\partial\mc{G})^{2})\,\mc{G}] -\nabla((\partial\mc{G})^{2})\cdot\nabla \mc{G}
=
\div[\nabla ((\partial\mc{G})^{2})\,\mc{G}] -\div[(\partial\mc{G})^{2}\nabla \mc{G}].
\]
Since the domain $\Omega^\star_M$ is comprised of a finite union of fattened Whitney cubes, its boundary consists of portions of faces of those cubes. Thus, its (outward) unit normal $\nu$ is well defined a.e.~on $\partial\Omega^\star_M$ and the divergence theorem can be applied.
Hence
\begin{align}
\label{intbypartsomegstar}
2\,\iint\limits_{\Omega^{\star}_M} |\nabla^{2} \mc{G}|^{2}\,\mc{G}\rd X
&=
\iint\limits_{\Omega^{\star}_M}\Delta((\partial \mc{G})^{2})\,\mc{G}\rd X
\\ \nonumber
&=
\int\limits_{\partial\Omega^{\star}_M}\big[\nabla( (\partial\mc{G})^{2})\,\mc{G}-(\partial\mc{G})^{2}\,\nabla \mc{G}\big]\cdot\nu\,
\rd \rh^n\big|_{\partial\Omega^{\star}_M}
\\ \nonumber
&\lesssim
\int\limits_{\partial\Omega^{\star}_M} \big[ |\nabla^2 \mc{G} |\,|\nabla \mc{G}|\,\mc{G}+|\nabla \mc{G}|^3\big]\,\rd \rh^n\big|_{\partial\Omega^{\star}_M}
\\ \nonumber
&\lesssim
\int\limits_{\partial\Omega^{\star}_M} \Big(\frac{\mc{G}}{\delta}\Big)^3\,\rd \rh^n\big|_{\partial\Omega^{\star}_M},
\end{align}
where in the last inequality we have used standard interior estimates for harmonic functions. Note that for every $X\in \partial\Omega^{\star}_M\subset\Omega$ we have that there exists $I\in\W$ such that $X\in\partial I^{**}$ with $I\in \W_Q^*$ and $Q\in\dd_{\F_{N,M}^{\star}, Q_0'}
\subset \dd_{\F_{N}^{\star}, Q_0'}$. Hence, by Harnack's inequality, the Caffarelli-Fabes-Mortola-Salsa estimate, the fact that $\omega$ is doubling (see \cite{Aikawa} in the bounded case or \cite{HMT-general} in general) and \eqref{frefr}, we conclude that
$$
\frac{\mc{G}(X)}{\delta(X)}
\lesssim
\frac{\mc{G}(X(I))}{\ell(I)}
\approx
\frac{\omega(Q)}{\sigma(Q)}
\lesssim_N
1.
$$
Plugging this into \eqref{intbypartsomegstar} and using that $\partial\Omega^{\star}_M$ is ADR (since it is a sawtooth domain) with bounds that are uniform in $N$ and $M$ (see \cite[Lema 3.61]{HM}), we conclude that
$$
\iint\limits_{\Omega^{\star}_M} |\nabla^{2} \mc{G}|^{2}\,\mc{G}\rd X
\lesssim_N
\rh^n(\partial\Omega^{\star}_M)
\approx
\diam(\partial\Omega^{\star}_M)^n
\lesssim
\ell(Q_0')^n
\approx
\sigma(Q_0').
$$
Combining this with \eqref{OmegastarFstarN} and \eqref{IBP-limit} it follows that $\mut(\dd_{Q_0'})\lesssim_N\sigma(Q_0')$, as desired. This completes the proof of \eqref{sigmaQQ0}.
\end{proof}

\medskip

Equipped with Lemma \ref{lamma:pack}, we immediately see that for every $Q_{0}'\in \dd_{\F_{N}, Q_{0}}$ there exists $Q_0''\in\dd_{Q_0'}$ such that
\begin{align}
\label{Q0''notinFNBN}
2^{-C_{N}}\ell(Q_{0}')\leq \ell(Q_0'')\leq \ell(Q_{0}')\qquad \mbox{and}\qquad Q_{0}''\notin  \dd_{\F_{N}, Q_{0}}\cap \mc{B}^{N},
\end{align}
where $C_N$ is the constant in \eqref{sigmaQQ0}. Otherwise,
\begin{align*}
(C_{N}+1)\,\sigma(Q_{0}')=\sum\limits_{\substack{Q\in\mathbb{D}_{Q_{0}'}\\ 2^{-C_{N}}\ell(Q_{0}')\leq \ell(Q)\leq \ell(Q_{0}')}} \sigma(Q)
\leq
\sum\limits_{\substack{Q\in\dd_{Q_{0}'}\\Q\in\mathbb{B}^{N}\cap \dd_{\F_{N},Q_{0}}}} \sigma(Q)\leq C_{N}\sigma(Q_{0}'),
\end{align*} which is absurd.

We now claim that
\begin{equation}\label{case1andcase2}
\left\{\begin{array}{ll} \mbox{For all }Q'_0\in \dd_{\F_N, Q_{0}},\mbox{ there exists }\widetilde{Q}_{0}'\in\dd_{Q_{0}'}\mbox{ such that }\\[0.1cm] 2^{-C_{N}}\le \frac{\ell(\widetilde{Q}_0')}{\ell(Q'_{0})}\le 1
\mbox{\ and either \ } \widetilde{Q}_{0}'\in\F_{N} \mbox{\ or\ }
\widetilde{Q}_{0}'\notin \mc{B}^{N}.\end{array}\right.
\end{equation}
To verify this claim, fix $Q_{0}'\in \dd_{\F_{N}, Q_{0}}$ and let $Q_0''\in\dd_{Q_0'}$ be a cube satisfying \eqref{Q0''notinFNBN}. We consider two separate possibilities.

\noindent
{\bf Case 1:} Suppose that $Q''_{0}\notin \dd_{\F_{N}, Q_{0}}$. Then  there exists $Q_{j}\in\F_{N}$ such that $Q''_{0}\subset Q_{j}$. As a consequence, $Q_0''\subset Q_j\cap Q_0$ with $Q'_{0}\in \dd_{\F_{N}, Q_{0}}$ give $Q_{j}\subsetneq Q_{0}'$. Thus, by \eqref{Q0''notinFNBN},
\[
Q''_{0}\subset Q_{j}\subsetneq Q_{0}'\qquad\mbox{and}\qquad 2^{-C_{N}}\ell(Q'_{0})\leq \ell(Q''_{0})\leq \ell(Q_{j})\leq \ell(Q_{0}').
\]
In this case, we pick $\widetilde{Q}_{0}':=Q_{j}$.

\noindent{\bf Case 2:} Suppose $Q_{0}''\notin \mc{B}^{N}$. In this case, we let $\widetilde{Q}_{0}':=Q_{0}''$ and the desired properties follow at once from \eqref{Q0''notinFNBN}.

We now have now all the ingredients required to prove Proposition \ref{prop:CAD-d}, and thus, complete the proof of \eqref{mainc} implies \eqref{maind}.

\begin{proof}[Proof of Proposition \ref{prop:CAD-d}]
By \cite[Lemma 3.61]{HM}, $\Omega_{\F_{N},Q_{0}}$ is a 1-sided NTA with ADR boundary. Hence all that remains is to show that $\Omega_{\F_{N},Q_{0}}$ satisfies the exterior corkscrew condition with constant depending on $N$. Fix any point $x\in\partial \Omega_{\F_{N},Q_{0}}$ and $0<r<\diam(Q_{0})\approx\diam(\Omega_{\F_{N},Q_{0}})$. There are two cases.

\noindent{\bf Case 1:} Suppose that $x\in\partial\Omega_{\F_{N},Q_{0}}$ with $0\le \delta(x)\le  r/M$, where $M$ is large enough, to be chosen. We first note that there exists $Q\in\dd_{Q_0}$ with $\ell(Q)\approx r/M$ such that $|x-x_Q|\lesssim r/M$. On the one hand, if $x\in\partial\Omega_{\F_{N},Q_{0}}\cap \partial\Omega$, then $x\in \overline{Q_0}$ by \eqref{prop61} and we can find $Q\in \dd_{Q_0}$ with $\ell(Q)\approx r/M$ and $x\in\overline{Q}$. On the other hand, if   $x\in\partial\Omega_{\F_{N},Q_{0}}\cap\Omega$, then by the definition of the sawtooth region, $x\in\partial I^*$, where $I\in\W_{Q'}^*$, $Q'\in\dd_{\F_N,Q_0}$, and $|x-x_{Q'}|\approx\dist(I,Q')\approx\ell(Q') \approx \ell(I)\approx\delta(x)\le r/M$.
Let us now take $Q\in\dd_{Q_0}$, an ancestor of $Q'$, such that $\ell(Q)\approx r/M$ and hence $|x-x_Q|\le |x-x_{Q'}|+|x_{Q'}-x_Q|\lesssim r/M$.

The proof now splits into two subcases.

\noindent{\bf Case 1a:} Suppose that $Q\notin \dd_{\F_{N}, Q_{0}}$. Then  $Q\subset Q_{j}$ for some $Q_j\in\F_{N}$ and by \cite[Lemma 5.9]{HM},
there exists a ball $B'\subset\ree\setminus \Omega_{\F_{N},Q_{0}}$ with center $x_{Q}$ and radius $r'\approx \ell(Q)\approx r/M$ such that $B'\cap\partial\Omega\subset Q$. If $y\in B'$, then we can guarantee
\[
|y-x|\leq |y-x_{Q}|+|x_{Q}-x|\lesssim r'+\frac{\ell(Q)}{M}
\lesssim
\frac{\ell(Q)}{M}<r
\]
by choosing $M$ sufficiently large. Hence $B'\subset B(x,r)$ and $x_{Q}$ is an exterior cork\-screw point relative to $B(x,r)\cap\partial \Omega_{\F_{N},Q_{0}}$ with corkscrew constant on the order of $M^{-1}$.

\noindent{\bf Case 1b:} Suppose that $Q\in \dd_{\F_{N}, Q_{0}}$. By \eqref{case1andcase2}, there exists $\widetilde{Q}\in\dd_{Q}$ such that
\begin{align}\label{aaa}
 2^{-C_{N}}\le \frac{\ell(\widetilde{Q})}{\ell(Q)}\le 1\qquad\mbox{and}\qquad\mbox{either }\widetilde{Q}\in\F_{N}\mbox{\ \  or\ \  }\widetilde{Q}\notin \mc{B}^{N}.
\end{align}
If $\widetilde{Q}\in\F_{N}$, we argue as in {\bf Case 1a}: \cite[Lemma 5.9]{HM} gives a ball $B'\subset\ree\setminus \Omega_{\F_{N},Q_{0}}$ with center  $x_{\widetilde{Q}}$ and radius $r'\approx\ell(\widetilde{Q})$. Once again, $B'\subset B(x, C\,r/M)\subset B(x,r)$ if $M$ is large enough. Hence $x_{Q}$ is an exterior corkscrew point relative to $B(x,r)\cap\partial \Omega_{\F_{N},Q_{0}}$ with corkscrew constant on the order of $2^{-C_N}\,M^{-1}$. Otherwise, if $\widetilde{Q}\notin \mc{B}^{N}$, then the definition of $\mc{B}^{N}$ yields $z_{\widetilde{Q}}\in \Delta_{\widetilde{Q}}\subset \widetilde{Q}$ and $X_{\widetilde{Q}}^-$ for which $B(X_{\widetilde{Q}}^-,\eta(N)\,\ell(\widetilde{Q}))\subset B(z_{\widetilde{Q}}, r_{\widetilde{Q}}/4)\cap\Omega_{\rm ext}$. Note that $B(z_{\widetilde{Q}}, r_{\widetilde{Q}}/4)\subset B(x,C\,r/M)\subset B(x,r)$ provided $M$ is large enough. Hence $X_{\widetilde{Q}}^-$ is an exterior corkscrew point relative to $B(x,r)\cap\partial \Omega_{\F_{N},Q_{0}}$ with corkscrew constant of the order of $2^{-C_N}\,M^{-1}$.

\noindent{\bf Case 2:} Suppose that $x\in\partial\Omega_{\F_{N},Q_{0}}$ with $\delta(x)> r/M$. In particular, $x\in\Omega$, and by the definition of the sawtooth region, $x\in\partial I^*\cap J$ where $I\in\W_Q^*$, $Q\in\dd_{\F_N,Q_0}$, $J\in\W$, and $\tau\,J\subset\Omega\setminus\Omega_{\F_{N},Q_{0}}$ for some $\tau\in (1/2,1)$. Note that $\ell(I)\approx\ell(J)\approx\delta(x)>r/M$. Hence we can easily find an exterior corkscrew point in the segment joining $x$ with the center of $J$ and the corkscrew constant will depend only on $M$ and the implicit constants in the previous estimates.

This completes the proof that $\Omega_{\F_{N},Q_{0}}$ satisfies the exterior corkscrew condition with constant depending on $N$.
\end{proof}

\subsection{Proof of \eqref{mainb} implies \eqref{maine}}\label{bimpliese}

Suppose \eqref{mainb} holds and let $\Omega_\star:=\Omega_{\F,Q_{0}}$ be a chord-arc domain constructed in Section \ref{bimpliesd} in the proof of \eqref{mainb} implies \eqref{maind}. By \cite[Proposition 6.4, Corollary 3.6]{HM}, we can locate some $A_{Q_0}\in\Omega\cap\Omega_\star$, which is simultaneously a Corkscrew
point for $\Omega_\star$ with respect to $B(x, C\,r_{Q_0})\cap\Omega_\star$ for all $x\in\pom_\star$ and a Corkscrew point for
$\Omega$ with respect to $B(x,C\,r_{Q_0})\cap \Omega$ for all $x\in Q_0$. To prove \eqref{maine}, it suffices to show that there exist constants $\theta$, $\theta'>0$ and $C>1$ (possibly depending on $\Omega_\star$) such that
\begin{equation}\label{e:6-1}
C^{-1}\sigma(F)^{\theta'}
\le
\omega^{A_{Q_0}}(F)
\le
C\,\sigma(F)^{\theta}
\qquad
\forall\,F\subset Q_0\setminus\bigcup_{Q_j\in\F} Q_j.
\end{equation}
By Harnack's inequality, one may replace
$A_{Q_0}$ in \eqref{e:6-1} with some fixed pole $X_{0}$ at the expense of changing the value of $C$. We also remark that technically speaking we should consider arbitrary sets $F\subset \pom\cap\pom_\star$. However, the general case follows from \eqref{e:6-1} in view of \eqref{prop61} and the fact that $\sigma(\partial Q)=\omega^{A_{Q_0}}(\partial Q)=0$  for every $Q\subset\dd_{Q_0}$, since $\sigma$ and $\omega^{A_{Q_0}}$ are doubling (see \cite[Proposition 6.3]{HM} and \cite{HMT}).

Abusing notation, we let $\omega=\omega^{A_{Q_0}}$ denote harmonic measure of $\Omega$ with pole at $A_{Q_0}$ and let
$\omega_{\star}=\omega_{\Omega_\star}^{A_{Q_0}}$ denote harmonic measure of $\Omega_{\star}$ with pole at $A_{Q_0}$. Since $\Omega_\star$ is a chord-arc domain, $\omega_{\star}\in A_{\infty}(\partial\Omega_{\star})$ by \cite{DJ, Sem}. In particular, there exist constants $\alpha$, $\beta$ such that
\begin{align}\label{Ainfty-CAD}
\left(\frac{\sigma_\star(E_\star)}{\sigma (\pom_\star)}\right)^{\alpha}
\lesssim \frac{\omega_\star (E_\star)}{\omega_\star(\pom_\star)}\lesssim \left(\frac{\sigma_\star(E_\star)}{\sigma_\star (\pom_\star)}\right)^{\beta}
\qquad
\forall\, E_\star\subset\pom_\star,
\end{align}
where $\sigma_\star=\rh^n|_{\pom_\star}$,  and we view $\partial\Omega_\star$ as a surface ball in $\partial\Omega_\star$ with arbitrary center in $\partial\Omega_\star$ and radius $\diam(\partial\Omega_\star)$. Recall that $\Omega_\star$ is a chord-arc domain and, in particular, $\sigma_\star (\pom_\star)\approx\diam(\pom_\star)^n\approx \ell(Q_0)^n$.

We need to introduce some notation from \cite[Section 6]{HM}. Given a Borel measure $\mu$ defined on $Q_0$ and $\F$ from above, set
\begin{align}
\label{projopP}
\mc{P}_{\F}\mu(E)
:=
\mu\Big(E\setminus \bigcup\limits_{Q_{j}\in\F} Q_{j}\Big)+\sum\limits_{Q_{j}\in \F} \frac{\sigma(E\cap Q_{j})}{\sigma(Q_{j})}\,\mu(Q_{j})
\qquad \forall\,E\subset Q_{0}.
\end{align}
Also, define a Borel measure on $Q_0$ by the rule
\begin{align}
\label{defnofnu}
\nu(E):
=
\omega_{\star}\Big(E\setminus \bigcup\limits_{Q_{j}\in\F} Q_{j}\Big)+\sum\limits_{Q_{j}\in \F} \frac{\omega(E\cap Q_{j})}{\omega(Q_{j})}\,\omega_{\star}(P_{j})\qquad \forall\,E\subset Q_{0}.
\end{align}
Here $P_j\subset\partial\Omega_\star$ are $n$-dimensional cubes from \cite[Proposition 6.7]{HM}, which satisfy
\begin{align}
\label{prop67}
\ell(P_{j})\approx \dist(P_{j},Q_{j})\approx \dist(P_{j},\partial\Omega)\approx \ell(Q_j)
\qquad\mbox{and}\qquad\sum_j 1_{P_j}\le C
.
\end{align}
From the definition of the projection operator $\mc{P}_{\F}$ in \eqref{projopP} and measure $\nu$ in \eqref{defnofnu},
\begin{align}
\label{projofnu}
\mc{P}_{\F}\nu(E)=\omega_{\star}(E\setminus \bigcup\limits_{Q_{j}\in\F} Q_{j})+\sum\limits_{Q_{j}\in \F} \frac{\sigma(E\cap Q_{j})}{\sigma(Q_{j})}\omega_{\star}(P_{j})\qquad \forall\, E\subset Q_{0},
\end{align}
which depends only on $\omega_{\star}$ and not on $\omega$.

From \cite[Lemma 6.15]{HM}, we have the following version of the Dahlberg-Jerison-Kenig sawtooth lemma (see \cite{DJK}):
There exists some $\theta>0$ such that for every $E\subset Q_0$,
\begin{align}\label{DJK-P}
\left(\frac{\mc{P}_{\F}\omega(E)}{\mc{P}_{\F}\omega(Q_0)}\right)^{\theta}\lesssim \frac{\mc{P}_{\F}\nu(E)}{\mc{P}_{\F}\nu(Q_0)}\lesssim \frac{\mc{P}_{\F}\omega(E)}{\mc{P}_{\F}\omega(Q_0)}.
\end{align}
Note that from \eqref{projopP} and \eqref{projofnu}, for every $F\subset Q_{0}\setminus \cup Q_{j}$  we have $\mc{P}_{\F}\nu(F) =
\omega_{\star}(F)$ and $\mc{P}_{\F}\omega(F)=\omega(F)$.
It is trivial to see that $\mc{P}_{\F}\omega(Q_0)=\omega(Q_0)=\omega^{A_{Q_0}}(Q_0)\approx 1$ by Bourgain's estimate (see \cite{B87} or also \cite{HMT-general}), since $A_{Q_0}$ is an effective Corkscrew point relative to $Q_0$. Also, from \eqref{prop67} and \eqref{projofnu} it is clear that $\mc{P}_{\F}\nu(Q_0)\lesssim \omega_\star(\pom_\star)=1$. Additionally,
\cite[Proposition 6.12, (6.19)]{HM}, the fact that $\omega_\star$ is doubling (see \cite{HMT-general}), and Bourgain's estimate (see \cite{B87} or \cite{HMT-general})  give $$\mc{P}_{\F}\nu(Q_0)\gtrsim \omega_\star(B(x_{Q_0}^\star,C\,\ell(Q_0))\cap\pom_\star)\approx 1,$$ where $x_{Q_0}^\star\in\pom_\star$ and we have used that $A_{Q_0}$ is an effective Corkscrew point relative to $B(x_{Q_0}^\star,C\,\ell(Q_0))\cap\pom_\star$ as noted above. All together, these observations plus \eqref{prop61}, \eqref{Ainfty-CAD} and \eqref{DJK-P} yield
$$
\sigma(F)^\alpha
\lesssim
\omega_\star (F)
\lesssim
\omega(F)
\lesssim
\omega_\star (F)^\frac1{\theta}
\lesssim
\sigma(F)^{\frac{\beta}{\theta}}
\qquad
\forall\,F\subset Q_0\setminus\bigcup_{Q_j\in\F} Q_j,
$$ as desired. We remark that the implicit constants depend on $\sigma (\pom_\star)\approx\ell(Q_0)^n$.

\section{Proof of Theorem \ref{mainvarc}}
\label{variablecase}

As noted in the introduction, to establish Theorem \ref{mainvarc}, it suffices to prove  that \eqref{maincvc}  implies \eqref{maind} and \eqref{mainb} implies \eqref{mainevc}.
Before proceeding to the proof, we pause to make a relevant remark.

\begin{remark}\label{remark:A}
The conditions on the coefficients of the operator in Theorem \ref{mainvarc} are qualitative versions of the corresponding conditions imposed in \cite{KP} and \cite{HMT}. Indeed, for every bounded subdomain $\Omega'$ of $\Omega$, the matrix $A$ satisfies the conditions imposed in \cite{KP} and \cite{HMT} with respect to the domain $\Omega'$. It is worth mentioning that allowing implicit constants to depend on the subdomain $\Omega'$ would be problematic in \cite{KP} and \cite{HMT}, where the authors are interested in establishing scale-invariant estimates. Nevertheless, we allow such dependence below, because our goal is to obtain qualitative rather than quantitative conditions.
\end{remark}

\subsection{Proof of \eqref{maincvc}  implies \eqref{maind}}\label{ss:cprime-d}
The proof follows the same scheme of Section \ref{cimpliesd} and we only highlight the main changes. We replace $\omega$, $G$ and $\mathcal{G}$ by $\omega_{L}$,
$G_L$, $\mathcal{G}_L$ throughout Section \ref{cimpliesd}. We note that thanks to \cite{HMT-general} we have all the required ``PDE properties'' such as Bourgain and Caffarelli-Fabes-Mortola-Salsa type estimates, doubling of elliptic measure, etc. However, one needs to rework the integration by parts (i.e., the main argument in Lemma \ref{lamma:pack}), since now we work with $L$ in place of the Laplacian. Much as before we have the following substitute of \eqref{sigmaIandII}:
\begin{align*}
(N\,C_1)^{-1} \sigma_L(Q)
&\le
C_1^{-1}\omega_L(Q)
\le \int\limits_{\partial\Omega}\phi_{Q}\,\rd\omega
=
-\iint\limits_{\Omega}A\,\nabla \mc{G}_L\cdot\nabla\phi_{Q}\,\rd X\\
&=
-\iint\limits_{\Omega}(A\,\nabla \mc{G}_L-\vec{\alpha})\cdot\nabla \phi_{Q}\,\rd X
+
\iint\limits_{\Omega_{\rm ext}}\vec{\alpha}\cdot\nabla \phi_{Q}\,\rd X
\\
&=:
-\mathcal{I}+\mathcal{II},
\end{align*}
where this time $\vec{\alpha}:=\frac{1}{|U_{Q,\epsilon }|}\iint\limits_{U_{Q,\epsilon }} A\,\nabla \mc{G}_L\rd X$. From here the proof continues \textit{mutatis mutandis} (again with the help of \cite{HMT-general} for the required ``PDE properties'') and with the harmless presence of the matrix $A$ up to \eqref{trfrg}, which eventually leads to the following variable coefficient version of \eqref{sigmaQGreen}:
\begin{align*}
\sigma(Q) & \lesssim_{N} \iint\limits_{U_{Q, \epsilon }} |\nabla(A\,\nabla \mc{G}_L)|^{2}\,\mc{G}_L\, \rd X
\qquad
\forall\,
Q\in \dd_{\F_N, Q_0}\cap \mc{B}^N.
\end{align*}
Taking this into account and following the same argument, what is left to prove is
\begin{equation} \label{e:3-1}
\iint\limits_{\Omega^{\star}_M} |\nabla(A\,\nabla \mc{G}_L)|^{2}\,\mc{G}_L\rd X
\lesssim_N
\sigma(Q_0')
\end{equation}
with constants that do not depend on $M$. It was in that part of the proof where we strongly used harmonicity. However, \eqref{e:3-1} follows  from the following ``integration by parts'' proposition in \cite{HMT}.

\begin{proposition}[\cite{HMT}]\label{prop:CME-G}
Under the current background hypotheses and notation,  and
with Remark \ref{remark:A} in view, given $\Theta\ge 1$,
and a family $\F_K\subset \dd_{Q_0'}$, $K\ge 1$,
of pairwise disjoint dyadic cubes satisfying
\begin{equation}\label{goal:N:relevant}
\Theta^{-1}
\le
\frac{\omega_L(Q)}{\sigma(Q)} \le \Theta
\qquad\mbox{and}\qquad
\ell(Q)>2^{-K}\,\ell(Q_0')
\qquad \forall\,Q\in \dd_{\F_{K},Q_0'},
\end{equation}
we then have
\begin{equation}\label{eqn-goal:N}
\frac1{\sigma(Q_0')}\iint_{\Omega_{\F_{K},Q_0'}^*} |\nabla
(A\,\nabla \mathcal{G}_L)(X)|^2\,\mathcal{G}_L(X)\,dX
\le
C \,,
\end{equation}
where $C$ depends on $\Theta$, $Q_0$ and the allowable parameters, but not on $K$.
\end{proposition}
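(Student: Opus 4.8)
This is essentially the content of \cite{HMT}, and the plan is to explain how that argument specializes here. By Remark \ref{remark:A}, on every bounded subdomain of $\Omega$ the matrix $A$ satisfies precisely the structural hypotheses imposed in \cite{HMT}; hence their argument applies to the bounded domain at hand, the only difference being that the implicit constants are now allowed to depend on $Q_0$ (but never on $K$). So I would run that argument, which is the variable-coefficient counterpart of the integration by parts performed in Lemma \ref{lamma:pack} for the Laplacian. Write $D := \Omega_{\F_K,Q_0'}^*$, $u := \mathcal{G}_L$, and $\vec F := A\,\nabla u$, so that $\div\vec F = 0$ in a neighborhood of $\overline D$ (contained in $2\,B_{Q_0}^*\cap\Omega$, where $u$ is $L$-harmonic and lies in $W^{2,2}_{\rm loc}$, by \cite{HMT-general}).

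First I would record the pointwise data available on $D$. Combining \eqref{goal:N:relevant} with the Caffarelli--Fabes--Mortola--Salsa estimate, the doubling of $\omega_L$, Harnack's inequality, and Caccioppoli's inequality --- all valid here by \cite{HMT-general} --- yields $u(X) \approx_\Theta \delta(X)$ and $|\nabla u(X)| \lesssim_\Theta 1$ for $X$ in $D$ (and in a fixed dilate of $D$); interior $W^{2,p}$ estimates for $L$, using $A\in{\rm Lip}_{\rm loc}$, give in addition $\|\nabla^2 u\|_{L^2(I)} \lesssim_\Theta \ell(I)^{\frac{n+1}{2}-1}$ for each Whitney cube $I$ entering the construction of $D$. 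Because $D$ is a \emph{finite} union of fattened Whitney cubes, all of side length $\gtrsim 2^{-K}\ell(Q_0')$, on which $A$ is Lipschitz, the quantity $\iint_D |\nabla(A\nabla u)|^2\,u\,dX$ is a priori finite; this is what makes the absorption step below legitimate.

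Next, following the scheme of Lemma \ref{lamma:pack}, I would expand $|\nabla(A\nabla u)|^2\,u$ (using $\nabla(A\nabla u) = (\nabla A)\nabla u + A\nabla^2 u$ and ellipticity) as a divergence of explicit vector fields built from $u$, $\nabla u$, and $A$, plus solid error terms, using $\div\vec F = 0$ to cancel the terms that would otherwise be uncontrolled --- this is the role harmonicity played in the constant-coefficient case. Since $\partial D$ consists of faces of Whitney cubes, the divergence theorem applies. The boundary integrals are controlled exactly as in the Laplacian case: with the pointwise bounds above --- and, for the second-order contributions, a Fubini/good-slice choice of the fattening parameter so that $\int_{\partial I^{**}}|\nabla^2 u|\,d\mathcal{H}^n \lesssim \ell(I)^{-1}\iint_{2I}|\nabla^2 u|\,dX \lesssim_\Theta \ell(I)^{n-1}$ --- each face contributes $\lesssim_\Theta \ell(I)^n$, and summing over the boundedly overlapping faces forming $\partial D$ gives $\lesssim_\Theta \mathcal{H}^n(\partial D) \approx \ell(Q_0')^n \approx \sigma(Q_0')$, the ADR constant of the sawtooth being uniform in $K$ by \cite[Lemma 3.61]{HM}. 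The new solid error terms --- those absent when $A$ is constant --- all carry a factor $\nabla A$; using $|\nabla u| \lesssim_\Theta 1$ and $u \lesssim_\Theta \delta$ they are dominated by $\iint_D\bigl(|\nabla A| + |\nabla A|^2\,\delta\bigr)\,dX$ together with an absorbable term $\varepsilon\iint_D |\nabla^2 u|^2\,u\,dX$. Since every such $D$ sits inside a fixed ball $B(x_{Q_0}, C\ell(Q_0))$, applying the qualitative Carleson condition \eqref{main-A-Car} to that ball gives $\iint_D |\nabla A|\,dX \lesssim_{Q_0} \ell(Q_0')^n$ directly, and a Whitney-cube decomposition upgrades the \emph{inner supremum} in \eqref{main-A-Car} to $\iint_D |\nabla A|^2\,\delta\,dX \lesssim_{Q_0} \ell(Q_0')^n$ as well (on a cube $I$ one bounds $\iint_I |\nabla A|^2\delta \lesssim \ell(I)\,(\sup_{2I}|\nabla A|)\iint_I |\nabla A| \lesssim \iint_{CI}\sup_{Z\in B(X,\delta(X)/2)}|\nabla A(Z)|\,dX$, then sums over the boundedly overlapping $CI$). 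Both are $\lesssim \sigma(Q_0')$, uniformly in $K$; absorbing the $\varepsilon$-term into the finite left-hand side produces \eqref{eqn-goal:N}.

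Two points are the real work. The first is the bookkeeping of the extra $\nabla A$-terms and the verification that \eqref{main-A-Car} absorbs them without any $K$-dependence --- in particular the passage from the \emph{linear} Carleson control of $\sup|\nabla A|$ to control of the \emph{quadratic} quantity $\iint |\nabla A|^2\delta$, which genuinely uses the inner supremum. The second, and more delicate, is a regularity issue: since $A$ is only Lipschitz, $u = \mathcal{G}_L$ is merely $W^{2,p}_{\rm loc}$ rather than $C^2$, so the boundary integrals over Whitney-cube faces that involve $\nabla^2 u$ have to be given a rigorous meaning --- by the good-slice argument indicated above, or by first establishing \eqref{eqn-goal:N} for smooth coefficients approximating $A$ and passing to the limit. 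Both are carried out in \cite{HMT}.
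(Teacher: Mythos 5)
Your proposal mirrors the paper's sketch: same integration-by-parts structure, same pointwise bounds $\mathcal{G}_L\approx_\Theta\delta$ and $|\nabla\mathcal{G}_L|\lesssim_\Theta 1$ derived from the Carleson condition \eqref{main-A-Car} and \eqref{goal:N:relevant}, same use of the inner supremum in \eqref{main-A-Car} to dominate the solid $\nabla A$ error terms (both $\iint|\nabla A|$ and $\iint|\nabla A|^2\delta$), and same absorption of the small multiple of $\iint|\nabla^2\mathcal{G}_L|^2\,\mathcal{G}_L$ generated by Young's inequality. The one place where your route differs is the delicate boundary term carrying $|\nabla^2\mathcal{G}_L|$: you propose a Fubini/good-slice choice of the fattening parameter (or, alternatively, approximation by smooth coefficients), whereas the paper invokes a partition of unity adapted to the Whitney regions to replace the boundary integral by a solid one before applying the Caccioppoli-type bound for $\nabla^2\mathcal{G}_L$ --- both are standard and lead to the same $\lesssim_{\Theta,Q_0}\mathcal{H}^n(\partial\Omega^*_{\F_K,Q_0'})\approx\sigma(Q_0')$ estimate, uniformly in $K$.
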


Note that the constants in the analogue of \eqref{frefr} in the variable coefficient setting may depend on $N$ and so does depend the implicit constant in \eqref{e:3-1}. This completes the proof of the variable coefficient version of Lemma \ref{lamma:pack} . The rest of the argument is of a geometrical nature and can be carried out without change. Details are left to the interested reader.

For the sake of completeness we sketch the integration by parts argument from \cite{HMT}. We first notice  that as observed before the ADR assumption implies the ``capacity density condition'' and therefore the required PDE properties that we will be using (Bourgain, Caffarelli-Fabes-Mortola-Salsa, etc.) hold. The arguments are somehow similar to the ones in \cite{JK} and the full details will be given in \cite{HMT-general}. Another relevant observation is that our current assumptions (i.e, the Carleson estimate \eqref{main-A-Car}) imply that for any positive weak  solution $u$ on $2\,B\cap \Omega$, where $B$ is a ball centered at $\pom$, one has the pointwise estimate $|\nabla u(X)|\le C_B\,u(X)/\delta(X)$. This follows from  re-scaling \cite[Lemma 3.1]{GW} (see \cite{HMT} for full details). This and the hypotheses in Proposition \ref{prop:CME-G} guarantee that the following holds
\begin{equation}
|\nabla  \mathcal{G}_L(X)|
\lesssim_{Q_0}
\frac{ \mathcal{G}_L(X)}{\delta(X)}
\approx\frac{\omega_L(Q)}{\sigma(Q)} \lesssim_\Theta 1,
\qquad
\forall\, X\in \overline{\Omega_{\F_{K},Q_0'}^*} .
\label{eq:est-grad-G-saw}
\end{equation}
On the other hand, by \eqref{main-A-Car} and elementary geometric arguments one can see that 
$\big\||\nabla A(\cdot)|\,\delta(\cdot)\big\|_{L^\infty(B_{Q_0^*}\cap\Omega)}\lesssim C_{2\,B_{Q_0^*}}$ where we recall that $B_{Q_0}^*=\kappa_0\, B_{Q_0}$. 

We can now estimate \eqref{eqn-goal:N}. First, when $\nabla$ hits $A$ one can see that
\begin{multline*}
\iint_{\Omega_{\F_{K},Q_0'}^*} 
|\nabla A(X)|^2\, |\nabla \mathcal{G}_L(X)|^2\,\mathcal{G}_L(X)\,dX
\lesssim
\iint_{\Omega_{\F_{K},Q_0'}^*} 
|\nabla A(X)|^2\,\delta(X)\,dX
\\
\lesssim
\big\||\nabla A(\cdot)|\,\delta(\cdot)\big\|_{L^\infty (B_{Q_0^*}\cap \Omega)}\,
\iint_{T_{Q_0'}} |\nabla A(X)|\,dX
\lesssim
C_{2\,B_{Q_0^*}}^2\,\ell(Q_0')^{n},
\end{multline*}
where in the last estimate we have used \eqref{main-A-Car} along with the facts that $Q_0'\subset Q_0$ and 
$T_{Q_0'}\subset B_{Q_0'}^*\cap\Omega$. To continue with \eqref{eqn-goal:N} we now look at the terms that appear when $\nabla$ hits $\nabla\mathcal{G}_L$. In that case we can simply use that $A$ is a bounded matrix and hence it suffices to show that
\begin{equation}
\iint_{\Omega_{\F_{K},Q_0'}^*} |\nabla (\partial \mathcal{G}_L)(X)|^2\,\mathcal{G}_L(X)\,dX
\le
C\, \sigma(Q_0),
\label{eq:calim-IBP-A}
\end{equation}
where ``$\partial$'' denotes a fixed generic derivative and 
where $C$ depends on $\Theta$, $Q_0$ and the allowable parameters, but not on $K$. Next we use that $A$ is uniformly elliptic, symmetric and that $\mathcal{G}_L$ is a weak solution in $\overline{\Omega_{\F_{K},Q_0'}^*}$ (let us recall that we always work in a regime where the fixed pole $X_0\notin 4\,B_{Q_0^*}$) to obtain that
\begin{multline}\label{form-ibp}
|\nabla (\partial \mathcal{G}_L)|^2\,\mathcal{G}_L
\lesssim
A\,\nabla (\partial \mathcal{G}_L)\cdot \nabla (\partial \mathcal{G}_L)\,\mathcal{G}_L
\\
=
\partial\big(A\,\nabla \mathcal{G}_L \cdot \nabla(\mathcal{G}_L\,\partial\mathcal{G}_L)
\big)
-
\div\big(A\,\nabla \mathcal{G}_L\, \partial(\mathcal{G}_L\,\partial \mathcal{G}_L)\big)
\\
-
\partial A\,\nabla \mathcal{G}_L\,\nabla(\mathcal{G}_L\,\partial\mathcal{G}_L)
-
\frac12
\div\big(A\,\nabla \mathcal{G}_L\, (\partial \mathcal{G}_L)^2\big)
\end{multline}
Notice that we can use the divergence theorem in the domain ${\Omega_{\F_{K},Q_0'}^*}$ since the latter 
is a finite union of fattened Whitney cubes, so its boundary consists of portions of faces of those cubes and thus 
its (outward) unit normal $\nu$ is well defined a.e.~on $\partial\Omega_{\F_{K},Q_0'}^*$. This, \eqref{form-ibp}, and the fact that $A$ is bounded allow us to conclude that
\begin{align*}
I:&=\iint_{\Omega_{\F_{K},Q_0'}^*} |\nabla (\partial \mathcal{G}_L)(X)|^2\,\mathcal{G}_L(X)\,dX
\\
&\lesssim
\int_{\partial \Omega_{\F_{K},Q_0'}^*}|\nabla\mathcal{G}_L|^3\,d\mathcal{H}^n
+
\int_{\partial \Omega_{\F_{K},Q_0'}^*} |\nabla \mathcal{G}_L|\,|\nabla ^2 \mathcal{G}_L|\,\mathcal{G}_L\,d\mathcal{H}^n
\\
&\qquad\qquad+
\iint_{\Omega_{\F_{K},Q_0'}^*} |\nabla A|\,|\nabla\mathcal{G}_L|^3\,dX
+
\iint_{\Omega_{\F_{K},Q_0'}^*} |\nabla A|\,|\nabla\mathcal{G}_L|\,\nabla(\partial\mathcal{G}_L)|\,\mathcal{G}_L\,dX
\\
&
=:II+III+IV+V.
\end{align*}
To estimate $II$ we invoke \eqref{eq:est-grad-G-saw} and use that $\partial\Omega_{\F_{K},Q_0'}^*$ is ADR (since it is a sawtooth domain) with bounds that depend only on the ADR constant of $\pom$ and the other allowable parameters of $\Omega$ (but that are independent of $K$), see \cite[Lema 3.61]{HM}:
$$
II
\lesssim_{Q_0, \Theta}
\mathcal{H}^n(\partial\Omega_{\F_{K},Q_0'}^*)
\approx
\diam(\partial\Omega_{\F_{K},Q_0'}^*)^n
\approx
\ell(Q_0')^n
\approx
\sigma(Q_0').
$$
For $IV$ we use again \eqref{eq:est-grad-G-saw} and \eqref{main-A-Car}:
$$
IV
\lesssim_{Q_0, \Theta}
\iint_{B_{Q_0'}^*\cap \Omega} |\nabla A|\,dX
\lesssim
C_{B_{Q_0}^*}\,\ell(Q_0')^n
\approx
C_{B_{Q_0}^*}\,\sigma(Q_0').
$$
For $V$ we apply Young's inequality and once more \eqref{eq:est-grad-G-saw} 
\begin{align*}
V
&\le
\frac12
\iint_{\Omega_{\F_{K},Q_0'}^*} |\nabla(\partial\mathcal{G}_L)|^2\,\mathcal{G}_L\,dX
+\frac12
\iint_{\Omega_{\F_{K},Q_0'}^*} |\nabla A|^2\,|\nabla\mathcal{G}_L|^2\,\mathcal{G}_L\,dX
\\
&\le
\frac12\,I
+
C_{Q_0, \Theta}\,\big\||\nabla A(\cdot)|\,\delta(\cdot)\big\|_{L^\infty(B_{Q_0^*}\cap \Omega)}
\iint_{B_{Q_0'}^*\cap \Omega} |\nabla A|\,dX
\\
&\le
\frac12\,I
+
C_{Q_0, \Theta}\,C_{2\,B_{Q_0}^*}\,C_{B_{Q_0}^*}\,\ell(Q_0')^n
\\
&\le
\frac12\,I
+
C_{Q_0, \Theta}'\,\sigma(Q_0').
\end{align*}
At this stage to complete our proof we just need to see that $III$ satisfies an estimate like $II$. In such a case we collect all the previous computations, absorb $\frac12 I$ from $V$ and \eqref{eq:calim-IBP-A} follows. 

Let us now explain how to estimate $III$, a more delicate term because of the presence of $|\nabla^2 \mathcal{G}_L|$. Note that in the harmonic case, the second derivative of a harmonic function satisfies interior estimates since the first derivative is harmonic. Here $\mathcal{G}_L$ is a solution and hence we can control $\nabla \mathcal{G}_L$ as in \eqref{eq:est-grad-G-saw}, but a pointwise estimate for $\nabla^2 \mathcal{G}_L$ is not expected. However, we would like to convince the reader that $III$ behaves essentially as $II$, the full rigorous argument will appear in the forthcoming paper \cite{HMT}, here we just sketch the main ideas.  First, it is not difficult to show that one can obtain a Caccioppoli type estimate for $|\nabla^2 \mathcal{G}_L|$ in terms of $|\nabla\mathcal{G}_L|$ by using the $L^\infty$ bounds for $|\nabla A(\cdot)|\,\delta(\cdot)$ observed above. Hence to estimate  $III$ we would need to replace the integral along the boundary of the sawtooth by some kind of solid integration running around the boundary. This can be done if we repeat the ``integration by parts argument'' given above 
by incorporating a partition of unity adapted to the Whitney regions $U_Q$ that form the sawtooth domain that we are considering. All boundary terms become now solid integrals over some collection of Whitney cubes covering the boundary of the sawtooth. In that case we can estimate $|\nabla^2 \mathcal{G}_L|$ and see that $III$ obeys the same bound as $II$. The reader can find a similar ``integration by parts argument'' with a partition of unity at \cite[Section 5.2]{HLMN}.

\subsection{Proof of \eqref{mainb} implies \eqref{mainevc}} \label{ss:b-cprime}
We follow the arguments given in Section \ref{bimpliese} above and indicate the necessary changes. To that end, set $\omega_{L}=\omega^{A_{Q_{0}}}_L$ and $\omega_{L,\star}=\omega^{A_{Q_{0}}}_{L,\Omega_{\star}}$, which will replace $\omega=\omega^{A_{Q_{0}}}$ and $\omega_{\star}=\omega^{A_{Q_{0}}}_{\Omega_{\star}}$, respectively.

We claim that $\omega_{L,\star}\in A_\infty(\pom_\star)$. This is nowadays folklore and follows from the fact that $\Omega_\star$ is a bounded chord-arc domain
and from the properties of the matrix $A$. Let us sketch the argument for the sake of completeness. First recall that Kenig and Pipher showed in \cite{KP} that if $\widehat{\Omega}$ is a bounded Lipschitz domain, then its associated elliptic measure $\omega_{L,\widehat{\Omega}}\in A_\infty(\partial \widehat{\Omega})$ provided that
$$
a_{\widehat{\Omega}}(X)
=
\sup_{Y\in B(X,\delta_{\widehat{\Omega}}(X)/2)} |\nabla A(Y)|^2\,\delta_{\widehat{\Omega}}
(Y)
$$
is a Carleson measure in $\widehat{\Omega}$. Here $\delta_{\widehat{\Omega}}$ denotes the distance to $\partial\widehat{\Omega}$. It is straightforward to show that our assumptions on $A$  give such a condition for every bounded subdomain $\widehat{\Omega}\subset\Omega$, see Remark \ref{remark:A}. On the other hand, since $\Omega_\star$ is a bounded chord-arc domain, $\Omega_\star$ satisfies an
``interior big pieces'' of Lipschitz sub-domains condition by the work of David-Jerison in \cite{DJ}. Together with a simple maximum principle argument (see e.g.~ in \cite{Dah,DJ}) and the aforementioned result of \cite{KP},
one quickly obtains the following lower bound for $\omega_{L, \Omega_\star}$:
There are constants $\eta\in (0,1)$ and $c_0>0$ such that for
each surface ball $\Delta_\star\subset\partial\Omega_\star$, and any Borel subset $F\subset\Delta_\star$,
\begin{equation}\label{eq1.2***}
\omega_{L, \Omega_\star}^{X_{\Delta_\star}}(F)\geq c_0\,,\qquad {\rm whenever}\,\,\,\sigma_\star(F)\geq \eta \,\sigma_\star(\Delta_\star).
\end{equation}
In turn, the latter bound self-improves to an $A_\infty$ estimate for $\omega_{L, \Omega_\star}$ as desired,
via the comparison principle (see, e.g., \cite{DJ}).

Once this claim has been obtained, the proof follows \textit{mutatis mutandis} with a version of \eqref{DJK-P} for $L$ and with some needed PDE tools that can be taken from \cite{HMT-general}. Details are left to the interested reader.


\section{An example}\label{section:example}

As noted in the introduction, Theorem \ref{main} can be seen as a qualitative version of \eqref{AHMNT} in the sense that \eqref{maina}--\eqref{maine} can be seen as qualitative versions of (i)--(iv) in \eqref{AHMNT}. In view of this, it is worthwhile to find an example of a domain $\Omega_\star$ satisfying the required background hypotheses (i.e., 1-sided NTA with ADR boundary), for which \eqref{maina}--\eqref{maine} in Theorem \ref{main} hold, but (i)--(iv) in \eqref{AHMNT} fail. In particular, the corresponding harmonic measure (or elliptic measures of the previous section) will satisfy the absolute continuity conditions \eqref{mainc} and \eqref{maine}, but will not belong to $A_\infty$ or weak-$A_\infty$. To find this example, we will start with $\Omega\subset\re^3$, a 1-sided NTA domain with empty exterior, whose boundary is not rectifiable. We will then define a sawtooth subdomain $\Omega_\star$ whose boundary is a countable union of partial faces of Whitney boxes. Therefore, $\partial\Omega_\star$ is rectifiable. On the other hand, we will see that $\partial\Omega_\star$ cannot be Uniformly Rectifiable, otherwise $\Omega_\star$ is NTA and satisfies the exterior corkscrew condition by \eqref{AHMNT} (in particular, see \cite{AHMNT}), but our construction  prevents this from happening.

Let $\mathfrak{C}$ be the ``4-corner Cantor set'' of J. Garnett (see,  e.g., \cite[p. 4]{DS2}). It is not difficult to show  from the construction of the set that $\mathbb{R}^{2}\setminus\mathfrak{C}$ is a 1-sided NTA domain with 1-dimensional ADR boundary and empty exterior.
Let $\mathfrak{C}^{\star}=\mathfrak{C}\times\mathbb{R}$ and $\Omega=\mathbb{R}^{3}\setminus \mathfrak{C}^{\star}$. We shall show that $\Omega$ is a 1-sided NTA with 2-dimensional ADR boundary.

Let us first show that $\partial\Omega=\mathfrak{C}^\star$ is 2-dimensional ADR. Given $x'\in\re^2$ and $r>0$ we write $B_2(x',r)\subset \re^2$ to denote the 2-dimensional ball centered at $x'$ with radius $r$. Analogously, given $t\in\re$ and $r>0$, $B_1(t,r)\subset\re$ denotes the 1-dimensional interval centered at $t$ with radius $r$. It is clear from the definition that for every $x=(x',t)\in\pom=\mathfrak{C}^\star$ one has
$$
\big(B_2\big(x',r/\sqrt{2}\,\big)\cap\mathfrak{C}\big)\times B_1\big(t,r/\sqrt{2}\,\big)\subset B(x,r)\cap\mathfrak{C}^\star\subset \big(B_2(x',r)\cap\mathfrak{C}\big)\times B_1(t,r).
$$
These readily imply that $\mathfrak{C}^\star$ is 2-dimensional ADR as $\mathfrak{C}$ is 1-dimensional ADR. We now recall that the complement of an ADR set always satisfies the Corkscrew condition. In particular, so does $\re^3\setminus\mathfrak{C}^\star=\Omega$ and this gives the (interior) Corkscrew condition for $\Omega$. We next show that $\Omega$  satisfies  the Harnack Chain condition (in $\re^3$), again using that $\re^2\setminus\mathfrak{C}$ has the same property in $\re^2$.  To this end let $\rho>0$, $\Lambda\geq 1$ be given.  Let $X=(X',t)$, $Y=(Y',s)\in\Omega$, with $X'$, $Y'\in\mathbb{R}^{2}$ and $t$, $s\in\mathbb{R}$. Note that in particular $X'$, $Y'\in\re^2\setminus\mathfrak{C}$. Assume that $\delta(X),\delta(Y)>\rho$ and $|X-Y|\le \Lambda \rho$. It can be easily seen that
\begin{align}\label{distancetoCantorset}
\delta(X)=\dist(X, \partial\Omega)=\dist(X',\mathfrak{C})\quad \mbox{and}\quad \delta(Y)=\dist(Y,\partial\Omega)=\dist(Y',\mathfrak{C}).
\end{align}
Using this observation  and that $\mathbb{R}^{2}\setminus\mathfrak{C}$ satisfies the Harnack Chain condition in $\re^2$,
we can find a Harnack Chain of 2-dimensional open balls connecting $X'$ and $Y'$. The implicit constants are all under control and in particular the number of balls depends only on $\Lambda$ by \eqref{distancetoCantorset} and since $|X'-Y'|\le |X-Y|\le \Lambda\,\rho$. Now, for any of the balls $B^i_2=B_2(X_i',r_i)\subset\re^2$, we let $B_i:=B(X_i,r_i)\subset\re^3$ with $X_i=(X_i',t)$. Clearly $\{B_i\}_i$ is a Harnack chain in $\Omega$ connecting $X=(X',t)$ and $Z:=(Y',t)$. Next we can add to the previous chain a collection of 3-dimensional balls, connecting $Z=(Y',t)$ with $Y=(Y',s)$, whose centers lie in the line segment between $Z$ and $Y$ and with radius equal to $\delta(Y)/2=\dist(Y',\mathfrak{C})/2$. Note that the number of balls will be of the order of $|t-s|/\delta(Y)\le |X-Y|/\rho\le \Lambda$. Hence our proof of the Harnack Chain condition is complete.

Now that we know $\Omega$ is a 1-sided NTA domain with ADR boundary, we simply note that $\Omega_{\rm ext}= \emptyset$, which in particular implies  \eqref{mainb} in Theorem \ref{main} fails. Therefore, $\sigma$ is not absolutely continuous with respect to $\omega$  by Theorem \ref{main}; that is, there is a set $F\subset\mathfrak{C}^\star=\pom$ with $\sigma(F)>0$ but $\omega(F)=0$. Also, $\mathfrak{C}^\star=\pom$ is not rectifiable,   again by Theorem \ref{main}.

Next,  we are going to construct a pairwise disjoint family $\mathcal{F}\subset \dd_{Q_0}$ with $Q_0$ a given dyadic cube in $\pom=\mathfrak{C}^{\star}$ and let $\Omega_{\star}=\Omega_{\mc{F},Q_0}$. As described above, we shall show that
\begin{align} \label{omegaisntaadrr}
\begin{split}
&\Omega_{\star}\, \,  \mbox{is 1-sided NTA}, \\
&\partial\Omega_{\star}\, \, \mbox{is ADR},\\
&\partial\Omega_{\star}\, \, \mbox{is rectifiable},\\
&\mbox{$\partial\Omega_{\star}$ is not Uniformly Rectifiable}.
\end{split}
\end{align}
Notice that in such a case $\omega_{\Omega_\star}$ will not be weak-$A_\infty(\pom_\star)$ by \eqref{AHMNT} (in particular, see \cite{HMU}), but  \eqref{maina}--\eqref{maine} in Theorem \ref{main} hold, so in particular $\mathcal{H}^n\big|_{\pom_\star}\ll \omega_{\Omega_\star}$ by \eqref{mainc} and the two measures are mutually absolutely continuous $\mathcal{H}^n\big|_{\pom_\star}$-a.e.~by \eqref{maine}.

To construct our example, we let $\alpha_0\in\N$ large enough  and let $\{\alpha_{k}\}_{k\ge 1}\subset \N$ be an strictly increasing sequence such that $\alpha_k\to \infty$ fast enough as $k\to\infty$.  Fix $Q_0\in\dd$ such that $\ell(Q_0)=2^{-\alpha_0}$ and set $x_0=x_{Q_0}$ its center.  Take $Q_1\in\dd_{Q_0}$ such that $x_0\in Q_1$ and $\ell(Q_1)=2^{-\alpha_1}$. Write $x_1=x_{Q_1}$ and set
$$
\F_1
:=
\{Q\in\dd_{Q_0}:\ \ell(Q)=2^{-\alpha_{1}},\ x_0\notin Q\}
=
\{Q\in\dd_{Q_0}:\ \ell(Q)=2^{-\alpha_{1}}\}\setminus\{Q_1\}
,
$$
which is a pairwise disjoint family. Next, take $Q_2\in\dd_{Q_1}$ such that $x_1\in Q_2$ and $\ell(Q_2)=2^{-\alpha_2}$. Let $x_2=x_{Q_2}$ denote the center of $Q_2$ and set
$$
\F_2
:=
\{Q\in\dd_{Q_1}:\ \ell(Q)=2^{-\alpha_{2}},\ x_1\notin Q\}
=
\{Q\in\dd_{Q_1}:\ \ell(Q)=2^{-\alpha_{2}}\}\setminus\{Q_2\}
,
$$
which is again a pairwise disjoint family. Iterating this we have a family of cubes $\{Q_k\}_{k\ge 0}\subset \dd_{Q_0}$ such that $\ell(Q_k)=2^{-\alpha_k}$, and $Q_0\supsetneq Q_1\supsetneq Q_2\dots$ and $x_k=x_{Q_k}\in Q_{k+1}$. In particular, $\{x_k\}_{k\ge 1}$ is a Cauchy sequence (since $\alpha_k\to\infty$ as $k\to\infty$), and hence, there exists $\bar{x}\in\pom$ such that $x_k\to x$ as $k\to\infty$. Note that $\bigcap_{k\ge 0}\overline{Q_k}=\{\bar{x}\}$. Our construction also gives a family of pairwise disjoint families $\{\F_k\}_{k\ge 1}$ such that  $\F:=\bigcup_{k\ge 1}\F_k$ is a pairwise disjoint family of dyadic subcubes of $Q_0$, and
$$
\bigcup_{Q\in\F} Q
=
\bigcup_{k=1}^\infty\bigcup_{Q\in\F_k} Q
=
\bigcup_{k=1}^\infty (Q_{k-1}\setminus Q_k)
=
Q_0\setminus\bigcap_{k=1}^\infty Q_k.
$$
At last,  we set $\Omega_\star=\Omega_{\F,Q_0}$ (see Section \ref{s1}). Because $\Omega$ is a 1-sided NTA domain with ADR boundary, we know that $\Omega_\star$ is also a 1-sided NTA domain with ADR boundary by \cite[Lemma 3.61]{HM}. It remains to show that $\partial\Omega_\star$ is rectifiable, but not Uniformly Rectifiable.

To see that $\partial\Omega_\star$ is rectifiable, note that
\begin{align*} \label{decomposeOstar}
\partial\Omega_\star=(\partial\Omega\cap\partial\Omega_\star)\cup (\Omega\cap \partial\Omega_\star).
\end{align*}
On the one hand, from  \eqref{prop61}, \cite[Proposition 6.3]{HM}, and the fact that $\pom$ is ADR (and hence $\sigma=\mathcal{H}^2\big|_{\pom}$ is doubling), it follows that
$$
\sigma(\partial\Omega\cap\partial\Omega_\star)
=
\sigma\Big(Q_0\setminus\bigcup_{Q\in\F} Q\Big)
=
\sigma\Big(\bigcap_{k=1}^\infty Q_k\Big)
=
\lim_{k\to\infty} \sigma(Q_k)
=0,
$$
where the last equality  holds since $\sigma(Q_k)\approx\ell(Q_k)^2=2^{-2\,\alpha_k}\to 0$ as $k\to\infty$. On the other hand, $\Omega\cap\partial\Omega_\star$ is countable union of partial faces of fattened Whitney cubes and hence $\Omega\cap\partial\Omega_\star$ is rectifiable (see the definition of sawtooth regions in Section \ref{ss:grid}). Therefore,  $\partial\Omega_\star$ is rectifiable.

Finally, we show that $\partial\Omega_\star$ is not Uniformly Rectifiable. Suppose otherwise that $\partial\Omega_\star$ is Uniformly Rectifiable. Then, since $\Omega_\star$ is also a 1-sided NTA domain with ADR boundary, we can apply \eqref{AHMNT} (in particular, the result of \cite{AHMNT}) to conclude that $\Omega_{\star}$ is an NTA domain. Therefore, $\Omega_\star$ satisfies the exterior corkscrew condition  with some constant $c_0$. In particular, for every $x\in\pom_\star$ and $0<r<\diam(\pom_\star)\approx\ell(Q_0)=2^{-\alpha_0}$, there exists
$X_{\Delta_\star}$ such that $B(X_{\Delta_\star},c_0\,r)\subset B(x,r)\cap(\Omega_\star)_{\rm ext}$, where $\Delta_\star=B(x,r)\cap\pom_\star$.  Thus,
\begin{equation}
\label{eq:viola}
\frac{|B(x,r)\cap(\Omega_\star)_{\rm ext}|}{|B(x,r)|}
\ge
\frac{|B(X_{\Delta_\star},c_0\,r)|}{|B(x,r)|}
=
c_0^n
\qquad\forall\,x\in\pom_\star,\quad \forall\, 0<r\lesssim 2^{-\alpha_0}.
\end{equation}
We are going to see that this is violated.  Recall that $x_k\to \bar{x}$ as $k\to\infty$ and that $x_k\in \pom\subset\re^3\setminus \Omega_\star$. Also, for every $k$, we have that $|X_{Q_k}-x_k|\approx\ell(Q_k)=2^{-\alpha_k}$ and hence $X_{Q_k}\to\bar{x}$ as $k\to\infty$. By construction, $X_{Q_k}\in \interior(U_{Q_k})\subset\Omega_\star$, since $Q_k\in\dd_{\F,Q_0}$. All together, these show that $\bar{x}\in\pom_\star$.

To get a contradiction we set $B_k=B(\bar{x},2^{-\alpha_k}/N)$ with $N\ge 1$ large enough to be chosen (depending only on dimension and the 1-sided NTA and ADR constants of $\Omega$). Our goal is to obtain that \eqref{eq:viola} cannot hold, see Figure \ref{corkscrewomegafigure}. Take $X\in (B_k\cap\Omega)\setminus\Omega_\star$. In particular, $X\in\Omega$ and we can take $I\in\W$ such that $I\ni X$ and $\ell(I)\approx\delta(X)\le 2^{-\alpha_k}/N$. Let $Q_I\in\dd$ be the nearest cube to $I$ with $\ell(Q_I)=\ell(I)$ so that $I\in \W_{Q_I}^*$. Then, using the notation in \eqref{DeltaQ},
\begin{multline*}
|x_{Q_I}-x_{k}|
\lesssim
\ell(Q_I)+\dist(Q_I,I)+\ell(I)+|X-\bar{x}|+ |\bar{x}-x_{k+1}|+|x_{k+1}-x_k|
\\
\lesssim
\frac{2^{-\alpha_k}}{N}
+\diam(Q_{k+1})
\approx
\frac{2^{-\alpha_k}}{N}+2^{-\alpha_{k+1}}
<
r_{Q_k}
\end{multline*}
by taking $N$ large enough, because we have assumed that $\alpha_k\to\infty$ fast enough. This implies that $x_{Q_I}\in\Delta_{Q_k}\subset Q_k$, see  \eqref{DeltaQ}. Also, $\ell(Q_I)=\ell(I)\lesssim 2^{-\alpha_k}/N<\ell(Q_k)$, and by the dyadic properties, we conclude that $Q_I\in\dd_{Q_k}$. In particular, observe that $Q_I\in\dd_{Q_0}$. Since $I\in \W_{Q_I}^*$ and $X\notin\Omega_\star=\Omega_{\F,Q_0}$, it follows that $Q_I\notin\dd_{\F,Q_0}$. In other words, $Q_I\in\dd_{\widetilde{Q}}$ for some $\widetilde{Q}\in\F=\bigcup_{k\ge 1}\F_k$. Hence $\widetilde{Q}\in\F_j$ for some $j\ge 1$. By construction, $Q_I\subset Q_{j-1}\setminus Q_j$. Thus, since $Q_I\in\dd_{Q_k}$, it follows that $j\ge k+1$
and $\delta(X)\approx\ell(I)=\ell(Q_I)\le\ell(\widetilde{Q})\le 2^{-\alpha_{k+1}}$. We have shown that
$$
(B_k\cap\Omega)\setminus\Omega_\star
\subset
B_k\cap\{X\notin\pom: \delta(X)\lesssim 2^{-\alpha_{k+1}}\}.
$$
Using \eqref{eq:viola} and \cite[Lemma 5.3]{HM}  (applied to $E=\pom$, which is 2-dimensional ADR), we obtain a contradiction:
$$
c_0^n
\le
\frac{|B_k\setminus\Omega_\star|}{|B_k|}
=
\frac{|(B_k\cap\Omega)\setminus\Omega_\star|}{|B_k|}
\lesssim
\frac{2^{-\alpha_{k+1}}\,(2^{-\alpha_k})^2}{(2^{-\alpha_k})^3}
=
2^{-(\alpha_{k+1}-\alpha_k)}\to 0
$$
as $k\to\infty$ by choosing $\alpha_k\to \infty$ fast enough. Notice that in the first equality we have used that $\re^3=\Omega\cup\pom$ (since $\Omega_{\rm ext}$ is the null set) and that $|\pom|=0$ since $\pom$ is ADR. We have reached a contradiction, and consequently, $\pom_{\star}$ is not Uniformly Rectifiable. This completes the proof of all the items in \eqref{omegaisntaadrr}.

\def\firstcircle{(0,0) circle (2.7cm)}
\def\secondcircle{(0,0) circle (3cm)}
\begin{figure}[ht]
\centering
\begin{tikzpicture}[scale=0.6]

\tikzstyle{arrow} = [thick,->,>=stealth]

\draw (-10,0) rectangle (-4,6) node[midway, opacity=.5] {$Q\in \mathcal{F}_{k}$};
\draw[red, solid, line width=.5mm] (-10,6)--(-4,6);

\draw (10,0) rectangle (4,6) node[midway, opacity=.5] {$Q\in \mathcal{F}_{k}$};
\draw[red, solid, line width=.5mm] (10,6)--(4,6);
\begin{scope}
\draw[gray] (4,0) rectangle (3,1) ;
\draw[red, solid, line width=.5mm] (-4,6)--(-4,1);

\draw[gray] (3,0) rectangle (2,1);

\draw[gray] (2,0) rectangle (1,1);

\draw[gray] (-4,0) rectangle (-3,1);
\draw[red, solid, line width=.5mm] (4,6)--(4,1);

\draw[gray] (-3,0) rectangle (-2,1);
\draw[gray] (-2,0) rectangle (-1,1);

\draw[red, solid, line width=.5mm] (4,1)--(1,1);

\draw[red, solid, line width=.5mm] (-4,1)--(-1,1);

\draw[red, solid, line width=.5mm] (-1,1)--(-1,.25);

\draw[red, solid, line width=.5mm] (1,1)--(1,.25);

\draw[red, solid, line width=.5mm] (-1,.25)--(-.25,.25);

\draw[red, solid, line width=.5mm] (1,.25)--(.25,.25);
\end{scope}

\begin{scope}[scale=0.5, shift={(-1,0)}]
\draw (3,0) rectangle (2.5,0.5);
\draw (2.5,0) rectangle (2,0.5);
\draw (2,0) rectangle (1.5,0.5);
\end{scope}
\begin{scope}[scale=0.5, shift={(1,0)}]

\draw (-3,0) rectangle (-2.5,0.5);
\draw (-2.5,0) rectangle (-2,0.5);
\draw (-2,0) rectangle (-1.5,0.5);
\end{scope}

\node[below] at (0,0) {$\bar{x}$};
\filldraw (0,0) circle (.5pt);

\node[below,purple] at (0,-3) {$B(\bar{x}, 2^{-\alpha_{k}}/N)=B_{k}$};
\draw[red] \firstcircle;
\node at (3,4) {$\mbox{\Large $\Omega_{\star}$}$};

\draw[arrow, gray, dashed, opacity=.5] (-3.5, 0.5)--(-3.5,-0.5);
\node[opacity=.5,below] at (-3.5,-0.5) {$Q\in \mathcal{F}_{k+1}$};
\draw[arrow, gray, dashed, opacity=.5] (3.5, 0.5)--(3.5,-0.5);
\node[opacity=.5,below] at (3.5,-0.5) {$Q\in \mathcal{F}_{k+1}$};
\draw[dashed, opacity=.4] (-4,2)--(-4,5);
\draw[dashed, opacity=.4] (4,2)--(4,5);
\draw [opacity=.4, decorate,decoration={brace,amplitude=10pt}]
(-4,6.1) -- (4,6.1) node [black,midway,yshift=0.6cm]
{$\scriptstyle 2^{-\alpha_{k}}$};
\draw [opacity=.4, decorate,decoration={brace,amplitude=10pt}]
(-4,0) -- (-4,6) node [black,midway,xshift=-0.6cm]
{$\scriptstyle 2^{-\alpha_{k}}$};
\draw[dashed, opacity=.4] (-1,1)--(-1,5.3);
\draw[dashed, opacity=.4] (1,1)--(1,5.3);
\draw [opacity=.4, decorate,decoration={brace,amplitude=6pt}]
(-1,5.4) -- (1,5.4) node [black,midway,yshift=0.4cm]
{$\scriptstyle 2^{-\alpha_{k+1}}$};
\draw [opacity=.4, decorate,decoration={brace,amplitude=4pt,mirror}]
(4,0) -- (4,1) node [black,midway,xshift=0.6cm]
{$\scriptstyle 2^{-\alpha_{k+1}}$};
\draw[dashed, opacity=.4] (-.25,.25)--(-.25,4.5);
\draw[dashed, opacity=.4] (.25,.25)--(.25,4.5);
\draw [opacity=.4, decorate,decoration={brace,amplitude=2pt}]
(-.25,4.6) -- (.25,4.6) node [black,midway,yshift=0.2 cm]
{$\scriptstyle 2^{-\alpha_{k+2}}$};

\clip \firstcircle;
\begin{scope}
\draw[fill=gray] (4,0) rectangle (3,1) ;
\draw[red, solid, line width=.5mm] (-4,6)--(-4,1);

\draw[fill=gray] (3,0) rectangle (2,1);

\draw[fill=gray] (2,0) rectangle (1,1);

\draw[fill=gray] (-4,0) rectangle (-3,1);
\draw[red, solid, line width=.5mm] (4,6)--(4,1);

\draw[fill=gray] (-3,0) rectangle (-2,1);
\draw[fill=gray] (-2,0) rectangle (-1,1);

\draw[red, solid, line width=.5mm] (4,1)--(1,1);

\draw[red, solid, line width=.5mm] (-4,1)--(-1,1);

\draw[red, solid, line width=.5mm] (-1,1)--(-1,.25);

\draw[red, solid, line width=.5mm] (1,1)--(1,.25);

\draw[red, solid, line width=.5mm] (-1,.25)--(-.25,.25);

\draw[red, solid, line width=.5mm,] (1,.25)--(.25,.25);
\end{scope}

\begin{scope}[scale=0.5, shift={(-1,0)}]
\draw[fill=gray] (3,0) rectangle (2.5,0.5);
\draw[fill=gray] (2.5,0) rectangle (2,0.5);
\draw[fill=gray] (2,0) rectangle (1.5,0.5);
\end{scope}
\begin{scope}[scale=0.5, shift={(1,0)}]
\draw[fill=gray] (-3,0) rectangle (-2.5,0.5);
\draw[fill=gray] (-2.5,0) rectangle (-2,0.5);
\draw[fill=gray] (-2,0) rectangle (-1.5,0.5);
\end{scope}
\begin{scope}
\draw[fill=gray] (-0.25,0) rectangle (-0.05,0.08);
\draw[red, solid, line width=.3mm,] (-0.25,0.25)--(-0.25,0.08);

\draw[red, solid, line width=.3mm,] (-0.25,0.08)--(-0.05,0.08);

\draw[fill=gray] (0.25,0) rectangle (0.05,0.08);
\draw[red, solid, line width=.3mm,] (0.25,0.25)--(0.25,0.08);

\draw[red, solid, line width=.3mm,] (0.25,0.08)--(0.05,0.08);
\end{scope}

\end{tikzpicture}
\caption{The set $(B_k\cap \Omega)\setminus \Omega_{\star}$.}
\label{corkscrewomegafigure}
\end{figure}
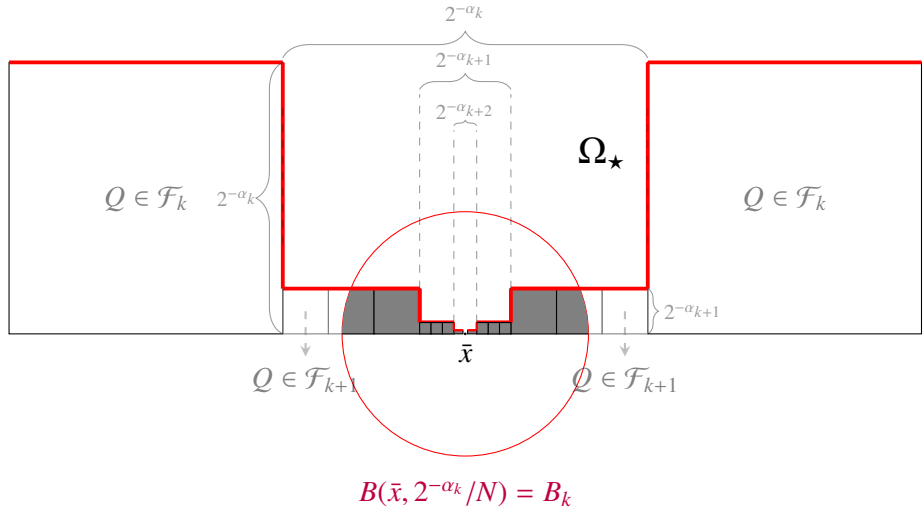

	\def\cprime{$'$} \def\cprime{$'$}

\end{document}